\newtheorem{theorem}{Theorem}[section]
 \newtheorem{corollary}[theorem]{Corollary}
 \newtheorem{lemma}[theorem]{Lemma}
 \newtheorem{proposition}[theorem]{Proposition}
 \theoremstyle{definition}
 \newtheorem{definition}[theorem]{Definition}
 \theoremstyle{remark}
 \newtheorem{remark}[theorem]{Remark}
  \newtheorem{ex}[theorem]{Example}
 \numberwithin{equation}{section}
\def \bG {\mathbb G}
\def \bH {\mathbb H}
\def \bN {\mathbb N}
\def \bR {\mathbb R}
\def \bX {\mathbb X}
\def \bY {\mathbb Y}
\def \cD {\mathcal D}
\def \cV {\mathcal V}
\def \fg {\mathfrak g}
\def \fh {\mathfrak h}
\def \id {\text{\rm I}}
\def \eps {\varepsilon}
\def \vol {{\rm vol}}
\def \pr  {{\rm pr}}
\def \IM  {{\rm Im}\, }
\def \Mat {{\rm Mat}}
\def \gr {{\rm gr}}
\def \mod {{\rm mod}}
\def \diag {{\rm diag}}
\def \dalg {\tilde d}
\begin{document}
\author[V. Fischer \& F. Tripaldi]{V\'eronique Fischer and Francesca Tripaldi}

\address[V. Fischer]%
{University of Bath, Department of Mathematical Sciences, Bath, BA2 7AY, UK} 
\email{v.c.m.fischer@bath.ac.uk}

\address[F. Tripaldi]%
{Centro De Giorgi,
SNS,
Piazza dei Cavalieri 3,  56 126 Pisa, Italy.} 
\email{francesca.tripaldi@sns.it}

\title{Subcomplexes on filtered Riemannian manifolds}

\medskip

\subjclass[2010]{
% Differential forms in global analysis
58A10, 
% Differential complexes [See also 35Nxx]; elliptic complexes
58J10,
% Pseudogroups, differentiable groupoids and general structures on manifolds
58H99,
% Sub-Riemannian geometry
53C17, 
% Analysis on other specific Lie groups
43A80, 
% Nilpotent and solvable Lie groups
22E25
}

\keywords{Differential forms on filtered manifolds, Subcomplexes in sub-Riemannian geometry, Osculating nilpotent Lie groups}

\maketitle

\begin{abstract}
In this paper, we present a general construction to extract subcomplexes from two distinct complexes on filtered Riemannian manifolds.
The first subcomplex computes the de Rham cohomology of the underlying manifold. On regular subRiemannian manifold equipped with a compatible Riemannian metric, it aligns locally with the so-called Rumin complex.
The second complex instead generalises the Chevalley-Eilenberg complex computing Lie algebra cohomology of a nilpotent Lie group. Our approach offers key insights on the role of the Riemannian metric when extracting subcomplexes, opening up potential new applications in more general geometric settings, such as singular subRiemannian manifolds. 
\end{abstract}

\makeatletter
\renewcommand\l@subsection{\@tocline{2}{0pt}{3pc}{5pc}{}}
\makeatother

\tableofcontents

\section{Introduction}

\subsection{Motivation}
\label{subsec_motivation}
The de Rham complex and its cohomology have inspired many mathematical ideas in the twentieth century, including Hodge theory and the Atiyah-Singer index theorem on Riemannian manifolds. More recently, an increasing number of techniques have been developed in order to extract subcomplexes with the aim of extending such classical results to more general geometric settings \cite{CaseChanilloYang,tripaldi2020rumin,DaveHaller22,case2022,CapHu,LerarioTripaldi}, especially in subRiemannian geometry.

In \cite{F+T1}, we proposed two constructions on homogeneous nilpotent Lie groups, one by adapting techniques previously developed in the context of parabolic geometry
\cite{CapSlovakSouvek,CalderbankDiemer,DaveHaller22} and the other influenced by the ideas in subRiemannian geometry and spectral sequences \cite{JulgSpectral,RuminPalermo}.
In the present paper, we push these ideas further to obtain subcomplexes from two different initial complexes and with two equivalent but different constructions. One one hand, we start from the usual de Rham complex $(\Omega^\bullet(M),d)$ to extract a subcomplex $(E_0^\bullet,D)$ computing the de Rham cohomology of the underlying manifold $M$, while on the other, we extract $(\tilde{E}_0^\bullet,\tilde{D})$ from the ``algebraic'' complex $(\Omega^\bullet(M),\tilde{d})$. Here $\tilde d$ denotes the algebraic part of the de Rham differential $d$, and so the complex $(\Omega^\bullet(M),\tilde{d})$ generalises the Chevalley-Eilenberg complex computing the Lie algebra cohomology of a given group to the manifold setting.

\subsection{Sketch of the constructions of the complexes}
\label{subsec_sketch}
Here, we explain our setting and briefly sketch our two equivalent constructions of each of the two complexes $(E_0^\bullet,D)$ and $(\tilde E_0^\bullet,\tilde D)$. The various claims will be proved in Section~\ref{sec_generalscheme}.

We consider  a filtered manifold $M$ and denote  by $\fg_x M$ its  osculating nilpotent Lie  algebra (also called nilpotentisation) above each point 
$x\in M$ (see Section \ref{subsec_def+setting}). 
We  then define 
the osculating Chevalley-Eilenberg differential $d_{\fg M}$ on the space $G\Omega^\bullet (M)$ of osculating forms over $M$, defined above each point $x\in M$ as the  Chevalley-Eilenberg differential of the osculating group $G_x M$  (see Section \ref{subsec_dfgM}).

We now further assume that $M$ is also equipped with a metric  on $TM$.
This allows us to construct the natural isomorphism
$\Phi:\Omega^\bullet (M)\to G\Omega^\bullet (M)$, 
providing an identification  between forms and osculating forms   (see Section \ref{subsec_filtration+metric}).
We are then able to define the map $d_0:=\Phi^{-1} \circ d_{\fg M}\circ \Phi$ acting on $\Omega^\bullet (M)$, 
which we call the base differential. 

\subsubsection{Constructing $D$ using $d_0^t$}
\label{subsubsec_constDd0t}
The  transpose $d_0^t$ of $d_0$  makes sense unambiguously and globally thanks to the metric on $TM$.
We consider the projection $\Pi_0$ 
onto 
$E_0:=  \ker d_0 \cap \ker d_0^t$ along $F_0 :=\IM d_0 + \IM d_0^t,$
and the projection $P$ 
onto 
$\ker d_0^t  \cap \ker d_0^t  d $
along $\IM d_0^t  + \IM d d_0^t$. 
The proper definitions are in fact given by 
  $\Pi_0$ being the orthogonal projection onto the kernel of the algebraic operator $\Box_0:=d_0 d_0^t +d_0^t d_0$, 
and  $P$ being the generalised kernel projection of the differential operator  
$\Box := d d_0^t +d_0^t d $.
Once we define the invertible differential operator
	$L:= P  \Pi_0 +(\id- P)(\id- \Pi_0)$,
  we have the decomposition 
 $$
 L^{-1}  d  L   =:  D+ C,
$$
yielding two complexes $(E_0^\bullet,D)$ and  $(F_0^\bullet,C)$ whose cohomologies are described as follows:

\smallskip

$\bullet$ 
$(F_0^\bullet, C)$ has trivial cohomology, since 
 $ C$ is conjugated to $d_0$ on $F_0$, and 
 $(F_0^\bullet,d_0)$ is acyclic.	

\smallskip

$\bullet$ The cohomology of $(E_0^\bullet, D)$ is linearly isomorphic to the de Rham cohomology of $M$ via the homotopically invertible chain map $ \Pi_0  L^{-1}$.

\subsubsection{Constructing $D$ using $d_0^{-1}$}
\label{subsubsec_constDd0inv}
Relying again on the metric on $TM$, we can define  the orthogonal projection $\pr_{\IM d_{\fg M}}$ onto $\IM d_{\fg M}$. We then consider the partial inverse $d_{\fg M}^{-1}:= (d_{\fg_x M})^{-1} \pr_{\IM d_{\fg_x M}}$ of $d_{\fg_x M}$ and thus the corresponding partial inverse 
$d_0^{-1}
	 := \Phi^{-1} \circ d_{\fg M}^{-1}\circ\Phi$ of $d_0$.
  Then the projection $P$ above may be constructed explicitly in terms of $d_0^{-1}$ as:
  $$
  P=(\id-b)^{-1} d_0^{-1}d+d(\id-b)^{-1} d_0^{-1}, \quad\mbox{where}\quad b:=d_0^{-1}d_0 - d_0^{-1} d.
  $$
The complex $D$ is then obtained as 
$D=\Pi_0 d (\id -P) \Pi_0$.

\subsubsection{Constructions of $\tilde D$}
We can modify the two constructions above by applying the same steps, but substituting the initial differential operator $d$ with the algebraic part $\tilde d$ of the de Rham differential instead (see Section \ref{subsec_ddalg}).
This yields two complexes 
$(\tilde E_0^\bullet,\tilde D)$ and  $(\tilde F_0^\bullet,\tilde C)$, 
with $(\tilde F_0^\bullet,\tilde C)$ acyclic
and $(\tilde E_0^\bullet,\tilde D)$ linearly isomorphic to $(\Omega^\bullet (M), \tilde d)$.

\subsection{Novelty and future works}
As already mentioned above,  constructions  related to the ones presented in this paper for the complex $D$ appeared before, in fact more than two decades ago in parabolic geometry 
\cite{CapSlovakSouvek,CalderbankDiemer,DaveHaller22}, 
and in relation to subRiemannian geometry and spectral sequences \cite{Rumin1990,JulgSpectral}. 
Already at that time, it was conjectured that these constructions should align in the resulting subcomplex in some sense (see Section 5.3 in \cite{RuminPalermo}, Section 5 in \cite{JulgKasparov}, Section 2.1 in \cite{Julg2019},
Section 5.3.3 in \cite{JulgSurvey}, and Section 1.3 in \cite{DaveHaller22}), %be equivalent in some sense \cite{RuminPalermo}
but no proof was offered. 
An important difficulty incomparing the constructions is that the considerations in the subRiemannian settings were  either local or at the level of osculating objects (see Section 5.1 and Remark 5.2 in \cite{RuminPalermo}).

The constructions presented in this paper are set on a filtered Riemannian manifold $M$; they yield   complexes globally defined and  acting on $\Omega^\bullet (M)$ - not its osculating counterpart 
$G\Omega^\bullet (M)$. We obtain two equivalent constructions for the complex $(E_0^\bullet,D)$, which  in the subRiemannian world and together with its osculating couterparts are often referred to as the Rumin complex. We are then able to provide a clearer context and a proof to the conjecture explained above. We also show that these two equivalent constructions may be adapted to yield a different complex, that we have denoted by $(\tilde E_0^\bullet,\tilde D)$. 
  In Appendix \ref{sec_contact}, we give an explicit construction for the operators $D$ and $\tilde D$ on a 3D contact manifold. 

  \smallskip

  We have two  main examples in mind for our geometric setting of a filtered Riemannian manifold. We discuss them in turn.
\begin{ex}
\label{ex:sRM}
    A regular subRiemannian manifold  equipped with a compatible Riemannian metric (in the sense of Definition \ref{def_compatible}) is naturally a filtered Riemannian manifold.
\end{ex}

Within the setting of Example \ref{ex:sRM}, 
 we are able to show that the subcomplex $(E_0^\bullet,D)$ 
  coincides with what is now customarily referred to as the Rumin complex on regular $CC$-structures, first introduced
in \cite{Rumin1990,Rumin1999,RuminPalermo}.
 Part of the motivation behind the present work is to shed a new light on Rumin's construction on regular subRiemannian manifolds and on the nature of constructed objects (e.g. complexes acting on forms or on osculating forms). We also want  to address the potential impact that the choice of a Riemannian metric can have on the resulting subcomplex.  In particular, in this paper, we emphasise the nontrivial role that the extra hypothesis of compatibility between the Riemannian and subRiemannian structures plays in the construction, see Section \ref{subsubsec_compatibleRg}.
 Our hope is to highlight the behaviour of subcomplexes such as the Rumin complex, under changes of variables on regular subRiemannian manifolds \cite{franchitesi}  or  pullback by Pansu-differentiable maps on Carnot groups \cite{kleiner2021sobolev}. 

\begin{ex}
\label{ex_M=G}
    A nilpotent Lie group equipped with a left-invariant filtration is naturally a filtered Riemannian manifold.
\end{ex}

Within the setting of Example \ref{ex_M=G}, the complex $(\tilde E_0^\bullet,\tilde D)$ computes the Lie algebra cohomology of the given Lie group. 
A more thorough study of this setting, especially concerning the impact of the particular choice of a filtration on the resulting subcomplex $(E_0^\bullet,D)$,  will be presented in a forthcoming paper \cite{FTex532}.

\medskip

We selected the two specific settings in Examples \ref{ex:sRM} and \ref{ex_M=G} due to the exciting recent advances in their applications:
 \begin{itemize}
     \item large scale geometry
(especially  the open conjecture classifying  nilpotent Lie groups by quasi-isometries
\cite{PansuRumin,pansu2019averages,BaldiFranchiPansu22}),
\item analytic torsion \cite{RuminSeshadri,kitaoka2020analytic,Haller22}  and 
currents \cite{canarecci2021sub,Vittone,julia2023} for subRiemannian geometry,
and 
\item  $K$ and index theories  for subelliptic operators and $C^*$-algebras associated with subRiemannian structures
\cite{vanErp10,BaumvErp}.
 \end{itemize}
We believe that the techniques behind the subcomplexes $(E_0^\bullet,D)$ and $(\tilde E_0^\bullet,\tilde D)$  presented in Sections \ref{subsec_sketch}  and  \ref{sec_generalscheme} are general enough that their construction will be adaptable to other settings in the future.
Interesting generalisations would be to singular subRiemannian manifolds (e.g. Martinet distributions) and to the quasi-subRiemannian setting, for instance the Grushin plane.

\subsection{Organisation of the paper}
In order to present the hypotheses of our setting and to remove any ambiguity on the objects we consider, the paper starts with some foundational preliminaries on filtrations over vector spaces and on tangent bundles of manifolds (Section \ref{sec_prelim}).
When the manifold is filtered (i.e. when the tangent bundle is filtered and the commutator brackets of vector fields respect this filtration),
then we can define the osculating Chevalley-Eilenberg differential $d_{\fg M}$ and other osculating objects (see Section~\ref{sec_filtM+osculating}).
In Section~\ref{sec_generalscheme}, we present the general scheme for our construction, 
discussing the particular case of regular subRiemannian manifold at the end in Section \ref{subsec_regsubR}, and the particular case of 3D contact manifolds in Appendix \ref{sec_contact}.

\subsection{Acknowledgements}
Both authors acknowledge the support of The Leverhulme Trust through the Research Project Grant  2020-037, {\it Quantum limits for sub-elliptic operators}. We are also glad to thank the Centro di Ricerca Matematica Ennio De Giorgi and
the Scuola Normale Superiore for the hospitality and support in the early stages of the article, and Professor Giuseppe Tinaglia for hosting us at King's College London in October 2023.

\section{Preliminaries}
\label{sec_prelim}

The main objective of this section is to set some notation
for well-known notions regarding manifolds (Section \ref{sec_prelM})
and filtrations. 
For the latter, the filtrations are on  vector spaces and vector bundles (Sections \ref{sec_filtrationsVS} and \ref{sec_filtrationTM} respectively), which will come in handy when considering a manifold equipped with a metric  (Section \ref{subsec_filtration+metric}).

\subsection{Filtrations of vector spaces}
\label{sec_filtrationsVS}

Here, we briefly recall the construction of graded objects associated with a  filtration of vector spaces.
The notions presented below extend naturally to smooth vector bundles and their smooth sections, as well as to modules. 

In this paper, all the vector spaces are taken over the field of real numbers. 

\subsubsection{Filtered vector spaces}
A \textit{filtered vector space }$W$ is a vector space $W$ with a filtration by vector subspaces:
\begin{equation}
	\label{eq_incrF}
\{0\} = W_0 \subseteq W_1 \subseteq \ldots \subseteq W_s =W.
\end{equation}
In general, a  filtration need not be finite; however, in this paper,  
 all the filtrations considered will be finite as above.

A \textit{graded vector space} $G$ is a vector space $G$ equipped with a gradation by vector subspaces, i.e. a direct sum decomposition by vector subspaces $G_j\subset G$:
\begin{align}
\label{grading def}
    G=\oplus_{j=1}^sG_j\,.
\end{align}
If $G$ is graded, there is a natural \textit{filtration associated with its gradation} \eqref{grading def}, given by $\lbrace 0\rbrace=W_0\subseteq W_1\subseteq\cdots\subseteq W_s=G$, where  $W_j:=\oplus_{k\le j}G_k$, $j=1,\ldots,s$.

\subsubsection{The vector space $\gr ( W)$}
Given a filtered vector space $W$ with the filtration \eqref{eq_incrF}, its \emph{associated graded space} is the vector space
\begin{equation}
	\label{eq_gr_incrF}
	\gr ( W) := \oplus_{j=1}^s W_j/W_{j-1}\,.
\end{equation}

Many of the summands above may be trivial. To avoid dealing with zero quotient spaces, it is enough to consider an appropriate choice of indices 
 $w_0,w_1,\ldots ,w_{s_0}\in\mathbb N $ for which the summands in \eqref{eq_gr_incrF} are not trivial, or equivalently for which the inclusions in \eqref{eq_incrF} are strict:
\begin{align}\label{def strict filtration}
    \{0\} = W_{w_0} \subsetneq W_{w_1} \subsetneq \ldots \subsetneq W_{w_{s_0}} = W\ , 
\ \mbox{and}\  \gr \, (W) = \oplus_{j=1}^{s_0} W_{w_j}/W_{w_{j-1}}.
\end{align}

\subsubsection{The linear map $\gr  (T)$}
Consider a linear map $T: W \to W'$ between two vector spaces, each equipped with a filtration 
$\{0\} = W_0 \subseteq W_1 \subseteq \ldots \subseteq W_s =W$
and 
$\{0\} = W'_0 \subseteq W'_1 \subseteq \ldots \subseteq W'_{s'} =W'$. We may assume $s=s'$. 

The map $T$ is said to \emph{respect the filtrations} when 
$ T(W_j) \subseteq W'_j$ for every $j=0,\ldots, s$. 
In this case, one can define the linear map
$\gr  (T) : \gr\,( W) \to \gr \,( W')$
as
$$
\gr( T)\,(v \ \mod \ W_j)
:=( T v)  \ \mod \ W'_j\ \  ,
\ v \in W_{j+1}\ , \ j=0,\ldots, s-1\,.
$$

\subsubsection{The basis $\langle e\rangle$}
\label{subsubsec_anglee}
If $W$ is finite dimensional, then $\gr ( W)$ has  the same dimension as $W$.
Given $n:=\dim W=\dim \gr  (W)$ and $n_j = \dim W_{w_j}/W_{w_{j-1}}$, we say that a basis $e=(e_1,\ldots, e_n)$ of $W$ is \emph{adapted to the filtration} \eqref{def strict filtration}, when 
$(e_1,\ldots, e_{n_1})$ is a basis of $W_{w_{1}}$, 
$(e_{1},\ldots, e_{n_2+n_1})$ is a basis of $W_{w_2}$, and so on.
For such a basis, we have that, for each $j=0,\ldots,s_0-1$,
$$\langle e_i \rangle = e_i \ \mod \ W_{w_j} , \quad i=d_j+1,\ldots, d_{j+1} \ , \text{ with } d_j=n_0+n_1+\cdots+n_j\,,
$$
gives a basis $\langle e\rangle := (\langle e_1 \rangle, \ldots ,\langle e_n\rangle) $ of $\gr( W)$.

In particular, we get that for each $j=0,\ldots,s_0-1$, the $n_j$-tuple $(\langle e_{d_j+1}\rangle,\ldots,\langle e_{d_{j+1}}\rangle)$ is a basis of $W_{w_{j+1}}/W_{w_j}$. In this sense, the basis $\langle e\rangle$ is graded and is said to be the graded basis associated with $e$.

\subsubsection{Matrix representation of linear maps}
\label{subsubsec_Mrep}
	In this paper, we will often use matrix representations of certain maps. To fix some notation, given a morphism  $\varphi:\cV_1\to\cV_2$ between two finite dimensional vector spaces, once we fix $\beta_1$ and $\beta_2$ two bases of $\cV_1$ and $\cV_2$ respectively, we denote by 
$$
\Mat_{\beta_1}^{\beta_2} \,\varphi
$$
the matrix representing $\varphi$ in the bases $\beta_1$ and $\beta_2$.

Let us take $e$ and $e'$ two bases adapted to the filtrations \eqref{def strict filtration} of two finite dimensional filtered vector spaces $W$ and $W'$ respectively. Assuming $s_0=s'_0$, $n_j=\dim W_{w_j}/W_{w_{j-1}}$ and $n'_j=\dim W'_{w_j}/W'_{w_{j-1}}$, if a linear map $T\colon W\to W'$ respects the filtrations, the matrix representation of $T$ in the bases $e,e'$ is block-upper-triangular, i.e.
$$
M:=\Mat_e^{e'} \,T = 
\left( \begin{array}{ccccc}
 M_1 & * &*\\
 0& \ddots & * \\
 0& & M_{s} \\	
 \end{array}\right), 
$$
with $M_j \in \mathbb R^{n'_j}\times\mathbb R^{n_j}$, with $j=1,\ldots,s_0$.
Moreover, the matrix representing $\gr (T)$ in $\langle e\rangle, \langle e'\rangle$ is block-diagonal, i.e.
 $$\gr(M):=
\Mat_{\langle e\rangle}^{\langle e'\rangle} \,\gr(T) = 
\left( \begin{array}{ccccc}
 M_1 & 0 &0\\
 0& \ddots & 0 \\
 0& & M_{s} \\	
 \end{array}\right)\,.
$$

\subsubsection{Subfiltration}
We say that a vector subspace $\Tilde{W}\subseteq W$ admits a subfiltration when 
$\Tilde{W}_{w_j} := \Tilde{W} \cap W_{w_j}$, $j=1,\ldots, s_0$ yields a filtration of $\Tilde{W}$. 
In this case,  the injection map $\Tilde{W}\hookrightarrow W$ respects the filtrations and  $\gr\, (\Tilde{W})$ is a subspace of $\gr\,( W)$.

% The following remark will be useful 
% \begin{lemma}
% \label{lem_subF}
% Let $\Tilde{W}$ be a subspace of $W$ admitting a subfiltration. 
% 	If $W$ is finite dimensional and $\gr\, \Tilde{W} =\gr\, W$ then $\Tilde{W}$ and $W$ are isomorphic to each other. 
% \end{lemma}
% \begin{proof}
% The claim follows directly from a simple consideration of the dimension of $\Tilde{W}\subset W$, since
% 	$\dim \Tilde{W}=\dim \gr\, \Tilde{W} = \dim \gr\, W =\dim W$. 
% \end{proof}

\subsubsection{Decreasing labelling and duality}
\label{subsubsec_duality}

One can also consider a filtered vector space $W$ with a decreasing filtration, that is a filtration with decreasing labelling:
\begin{align}\label{def decreasing filtr}
    W =V_0 \supseteq V_1 \supseteq \ldots \supseteq V_t =\{0\}\, .
\end{align}
 
It is not difficult to adapt all the constructions considered previously to a decreasing filtration.
Indeed, it suffices to change the labels by setting $W_j := V_{t-j}$, so that, for instance, the associated graded space becomes 
$\gr \, (V) := \oplus_{j=0}^{t-1} V_j/V_{j+1}$.
However, in this setting, the matrix representations will differ from those in Subsection \ref{subsubsec_Mrep}, as the matrix representing a linear map that respects such decreasing filtrations will be block-lower-triangular.

We observe that if $W$ has a decreasing filtration, then we obtain an increasing filtration of the dual space $W^\ast$ of $W$ by considering the filtration by the subspaces $V^\perp_j\subset W^\ast$, the annihilators of the subspaces $V_j\subset W$, that is
$$
\{0\} = V_0^\perp \subseteq V_1^\perp \subseteq \ldots \subseteq V_t^\perp = W^\ast \ ,\  V_j^\perp = \{\ell\in W^\ast\mid \ell |_{V_j}\equiv 0\} \,,
$$ 
also known as the dual filtration of \eqref{def decreasing filtr}.

Given two finite dimensional filtered vector spaces $W$ and $W'$, let us take $e$ and $e'$ two bases adapted to the decreasing filtrations $W=V_0\supseteq V_1\supseteq \cdots\supseteq V_t=\lbrace 0\rbrace$ and $W'=V'_0\supseteq V'_1\supseteq\cdots\supseteq V'_t=\lbrace 0\rbrace$. Then for any linear map $T\colon W\to W'$ that respects these decreasing filtrations, the dual map $$T^\ast\colon W'^\ast\to W^\ast\ \text{ where }\ T^\ast(\ell)=\ell\circ T\ , \ \forall\ \ell\in W'^\ast$$  respects the dual filtrations, and
\begin{align*}
    \Mat_{e'^\ast}^{e^\ast} T^\ast=\big(\Mat_e^{e'}T\big)^t\,,
\end{align*}
where $e^\ast$ and $e'^\ast$ denote the dual bases of $e$ and $e'$ respectively (one can easily show that if a basis $e$ is adapted to a decreasing filtration \eqref{def decreasing filtr}, then $e^\ast$ is adapted to its increasing dual filtration). In particular, this readily implies that the matrix representation of $T^\ast\colon W'^\ast\to W^\ast$ is block-upper-triangular, as pointed out in Subection \ref{subsubsec_Mrep}.

Moreover, the vector spaces $\gr (V^*)$ and $(\gr\, (V))^*$ are canonically isomorphic.

\subsubsection{Graded vector spaces}
\label{subsubsec_gradation}

As explained earlier, given a graded vector space $G$, its gradation \eqref{grading def} naturally produces a filtration through $\oplus_{k\leq j}G_{k}$, while a filtration \eqref{eq_incrF} of a vector space $W$ yields a graded space $\gr\, (W)$ \eqref{eq_gr_incrF}.

\begin{definition}
Let $G = \oplus_{j=1}^s G_j$ and 
	$G' = \oplus_{j'=1}^{s'} G'_j$ be two graded vector spaces. We may assume $s=s'$. 
	A  linear map $T:G\to G'$  respects the gradations when $T (G_j) \subset G'_j$ for any $j=1,\ldots,s$. 
\end{definition}
For instance, if $T$ is a linear map between two filtered vector spaces that respects the filtrations, then $\gr\,( T)$ respects the gradations.

Consider a graded vector space $G$ as above. 
Then its dual $G^*$ is also  graded  with the dual gradation $G^* = \oplus_{j=1}^s G_j^\perp$.
The $k^{th}$ exterior algebra is also graded via the (possibly infinite) gradation
$$
\wedge^k G = \oplus_{w=1}^\infty G^{k,w}
\, \text{, where }\
G^{ k,w}:=
\sum _{j_1 +\cdots+ j_k =w} G_{j_1} \wedge \cdots \wedge G_{j_k}\,.
$$
This leads to a (possibly infinite) gradation of the exterior algebra $\wedge^\bullet G$ which respects the wedge product:
$$
\wedge^\bullet G = \oplus_{w=0}^\infty G^{\bullet, w}, 
\qquad G^{\bullet, w} = \oplus_{k=0}^\infty G^{k,w},
$$
with $\wedge^0 G = G^{0,0}=\mathbb R$, and $G^{0,w}=\{0\}$ for $w>0$.
If $G$ is finite dimensional, the gradations of $\wedge^k G$ and $\wedge^\bullet G$ are finite. 

In the graded setting, there is a natural notion of weight: an element in $G_j$ has weight $j$, and more generally, an element in $G^{\bullet, w}$ has weight $w$.

\subsubsection{Filtration on the exterior algebra.}
A filtration \eqref{eq_incrF} on a vector space $W$ also induces a decreasing filtration on its $k^{th}$ exterior algebra:
\begin{equation}
	\label{eq_filtrationwedgekW}
	\wedge^k W=W^{ k, \geq 0}
\supseteq W^{ k, \geq 1}
\supseteq \ldots \supseteq
 W^{ k, \geq k\cdot s+1} =\{0\},
\end{equation}
where
$$
W^{ k, \geq w} :=
\sum _{j_1 +\cdots+ j_k \geq w} W_{j_1} \wedge \cdots \wedge W_{j_k}\,.
$$

The following lemma is easily checked.
\begin{lemma}
\label{lem_isomPsi}
Let $W$ be a finite dimensional vector space equipped with a filtration \eqref{eq_incrF},
and let $e=(e_1,\ldots, e_n)$ be a basis of $W$ adapted to the filtration. 
Define the linear map 
$$
\Psi : \wedge^k W\to \wedge^k \gr\,( W),
$$
via
$$
\Psi (e_{i_1} \wedge \cdots \wedge  e_{i_k})
= \langle e_{i_1}\rangle \wedge \cdots \wedge  \langle e_{i_k} \rangle, \quad i_1<\ldots <i_n.
$$
Then $\Psi$ is a linear isomorphism that respects the filtration \eqref{eq_filtrationwedgekW}
 of $\wedge^k W$ and the one induced by the gradation of $\wedge^k \gr\, (W)$.
 Moreover, 
 $\gr \, (\Psi)$ provides an isomorphism between 
$$
\gr (\wedge^k W) = \oplus_{w=0}^{k \cdot s} 
W^{ k, \geq w}/  W^{ k, \geq w+1}
$$
and $\wedge^k \gr\, (W)$ respecting their gradations. 
\end{lemma}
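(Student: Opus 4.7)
The plan is to prove all three assertions by leveraging the adapted basis $e$ of $W$ and the associated basis $\langle e\rangle$ of $\gr\,(W)$ (recalled in Section 2.1.4), together with a weight bookkeeping on basis multi-indices.

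First, I would verify that $\Psi$ is a well-defined linear isomorphism. The wedges $e_I:=e_{i_1}\wedge\cdots\wedge e_{i_k}$ indexed by ordered multi-indices $I=(i_1<\cdots<i_k)$ form a basis of $\wedge^k W$, so the defining prescription extends uniquely to a linear map $\Psi$. Since $(\langle e_1\rangle,\ldots,\langle e_n\rangle)$ is a basis of $\gr\,(W)$, the images $\langle e_I\rangle:=\langle e_{i_1}\rangle\wedge\cdots\wedge\langle e_{i_k}\rangle$ form a basis of $\wedge^k\gr\,(W)$. Hence $\Psi$ bijects one basis onto the other, so it is an isomorphism.

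Next, I would introduce weights to control the filtrations. To each index $i\in\{1,\ldots,n\}$ I attach the weight $\omega(i)$ defined as the unique $w_j$ with $e_i\in W_{w_j}\setminus W_{w_{j-1}}$; this is well-defined because $e$ is adapted. For an ordered multi-index $I$, set $\omega(I):=\omega(i_1)+\cdots+\omega(i_k)$. On the source side, $e_I$ lies in $W_{\omega(i_1)}\wedge\cdots\wedge W_{\omega(i_k)}\subseteq W^{k,\geq\omega(I)}$ by the definition in \eqref{eq_filtrationwedgekW}. On the target side, each $\langle e_{i_p}\rangle$ sits in the pure graded summand of $\gr\,(W)$ of weight $\omega(i_p)$, so $\langle e_I\rangle$ lies in the pure graded component $(\gr\,W)^{k,\omega(I)}$, hence in the decreasing filtered piece $\oplus_{w'\geq\omega(I)}(\gr\,W)^{k,w'}$. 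Summing over multi-indices of combined weight $\geq w$ yields $\Psi(W^{k,\geq w})\subseteq\oplus_{w'\geq w}(\gr\,W)^{k,w'}$, which is the filtration-respecting property.

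Finally, to deduce the statement about $\gr\,(\Psi)$, I would upgrade the previous step by verifying that $\Psi$ is strict, i.e.\ that for every $w$, $\Psi$ restricts to a bijection $W^{k,\geq w}\to\oplus_{w'\geq w}(\gr\,W)^{k,w'}$. This reduces to identifying both sides with the span of the basis wedges of combined weight $\geq w$ and noting that $\Psi$ matches these bases one-to-one. Once strictness is in place, the general construction of Section 2.1.3 produces $\gr\,(\Psi)$ as an isomorphism between the associated graded spaces; since $\wedge^k\gr\,(W)$ is already graded, the associated graded of its induced filtration is canonically $\wedge^k\gr\,(W)$ itself, and the resulting isomorphism respects the gradations as claimed.

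The step I expect to be the main obstacle is the strictness assertion, namely checking that $W^{k,\geq w}$ is actually \emph{equal to} the span of the wedges $e_I$ of weight at least $w$, rather than merely containing it. The inclusion ``span of $e_I$ with $\omega(I)\geq w$'' $\subseteq W^{k,\geq w}$ follows from the definitions, but the reverse requires unpacking each generator $v_1\wedge\cdots\wedge v_k$ of $W^{k,\geq w}$ (with $v_p\in W_{j_p}$ and $\sum j_p\geq w$) in the adapted basis $e$ and exploiting the block-triangular structure described in Section 2.1.5 to control the weights of the resulting basis wedges.
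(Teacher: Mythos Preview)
The paper does not provide a proof, stating only that the lemma ``is easily checked''. Your approach---matching the adapted bases $(e_I)_I$ of $\wedge^k W$ and $(\langle e\rangle_I)_I$ of $\wedge^k\gr(W)$ and tracking weights via the multi-index function $\omega(I)$---is exactly the natural verification, and your identification of the strictness of $\Psi$ (i.e.\ that $W^{k,\geq w}$ coincides with the span of the $e_I$ with $\omega(I)\geq w$) as the only point requiring care is correct.
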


Note that the isomorphism $\Psi$ depends on the choice of the basis $e$.

\subsubsection{Euclidean filtered vector spaces}
\label{subsec_Euclgr}
Here, we consider a Euclidean space, that is, a finite dimensional vector space $W$ equipped with a scalar product $(\cdot,\cdot)_W$.

Any subspace $W'\subset W$ is naturally Euclidean, 
its scalar product $(\cdot,\cdot)_{W'}$ being obtained by restricting
$(\cdot,\cdot)_W$.
Moreover, its orthogonal complement $W'^{(\perp, W)}$ in $W$ is naturally identified with the quotient 
$$
W/W' \cong W'^{(\perp, W)}.
$$
This quotient $W/W'$ is also Euclidean if we equip it with the scalar product 
$(\cdot,\cdot)_{W/W'}$ corresponding to
$(\cdot,\cdot)_{W'^{(\perp, W)}}$.

\medskip

Beside being Euclidean, we further assume the vector space $W$ also be filtered with filtration 
\eqref{def strict filtration}.
The resulting graded space $\gr \, W$ inherits a natural scalar product $(\cdot,\cdot)_{\gr W}$ by imposing that the decomposition \eqref{eq_gr_incrF} is orthogonal, 
and that $(\cdot,\cdot)_{\gr W}$ restricted to each $W_{w_j}/W_{w_{j-1}}$
is given by $(\cdot ,\cdot)_{W_{w_j}/W_{w_{j-1}}}$.
Indeed, each $W_{w_j}/W_{w_{j-1}}$ is naturally isomorphic to the orthogonal complement  
$V_j:=
W_{n_{j-1}}^{(\perp, W_{n_j})}$ of $W_{w_{j-1}}$ in $W_{w_{j}}$.
Therefore, by construction, the gradation by quotients on $\gr \, (W)$ is  isomorphic to the following gradation of $W$:
$$
W=V_1\oplus^\perp  V_2 \oplus^\perp \ldots \oplus^\perp V_{s_0}, 
$$
which we will refer to as the \emph{orthogonal gradation} or orthogonal graded decomposition of $W$ (associated with the given scalar product and filtration).

We observe that when considering a basis $e =(e_1,\ldots, e_n)$ of $W$ adapted to its filtration, after a Graham-Schmidt process, we may assume that 
$(e_1,\ldots, e_{n_1})$ is an orthonormal basis of $V_1=W_1$, 
$(e_{n_1+1},\ldots, e_{n_2+n_1})$ is an orthonormal  basis of $V_2$, etc.
In other words, after applying a Graham-Schmidt process, we can always obtain a basis $e$ adapted to the given orthogonal gradation of $W$.

\subsection{Preliminaries on manifolds}
\label{sec_prelM}

\subsubsection{The maps $d$ and $\dalg$ on a general manifold}
\label{subsec_ddalg}
In this section, we recall briefly some well-known facts about the de Rham differential which are valid on any smooth manifold $M$ (without any further assumptions). 
As is customary,  $d$ denotes the exterior differential on the space of smooth forms $\Omega^\bullet(M)$ on $M$.
It is well known that the map $d\colon\Omega^k(M)\to\Omega^{k+1}(M)$ is a smooth differential map that satisfies the Leibniz property and $d^2=0$. 
The associated cochain complex $(\Omega^\bullet(M),d)$ is traditionally referred to as the de Rham complex. 

The explicit formula for $d$ is given for  any  $\omega \in \Omega^k(M)$ and $V_0,\ldots,V_k \in \Gamma(TM)$ by
\begin{align*}
d\omega(V_0,\ldots,V_k)
&=
\sum_{i=0}^k (-1)^i V_i\left(\omega(V_0,\ldots,\hat V_i,\ldots, V_k) \right)
\\&\qquad +
\sum_{0\leq i<j\leq k}
(-1)^{i+j} \omega([V_i,V_j],V_0,\ldots,\hat V_i,\ldots,\hat V_j,\ldots, V_k),
\end{align*}
 the hats denoting omissions.

 In this paper, the algebraic part of $d$ is denoted by  $\dalg$.
 This is the map  given by $\dalg=0$ on $\Omega^0 (M)$, and for $k>0$ and any  $\omega \in \Omega^k(M)$, $V_0,\ldots,V_k\in \Gamma(TM)$, by
\begin{equation}
	\label{eqdef_d0}
	\dalg\omega(V_0,\ldots,V_k)
=
\sum_{0\leq i<j\leq k}
(-1)^{i+j} \omega([V_i,V_j],V_0,\ldots,\hat V_i,\ldots,\hat V_j,\ldots, V_k).
\end{equation}

\begin{lemma}
\label{lem_d0}
	The map $\dalg:\Omega^k (M)\to \Omega^{k+1} (M)$ is 
	smooth and algebraic. It satisfies the Leibniz property and $(\Omega^\bullet(M),\dalg)$ is a complex, i.e. $\dalg^2 =0$.
The action of $\dalg$ over $\Omega^1 (M)$ is given by the explicit formula
$$
\dalg\omega(V_0,V_1) = -\omega ([V_0,V_1])\ ,\  \forall\, \omega\in \Omega^1 (M), \ V_0,V_1\in \Gamma(TM)\,.
$$
\end{lemma}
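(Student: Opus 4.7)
My plan is to work with the decomposition
$$d=A+\dalg,\qquad A\omega(V_0,\ldots,V_k):=\sum_{i=0}^k(-1)^iV_i\big(\omega(V_0,\ldots,\hat V_i,\ldots,V_k)\big),$$
of the de Rham differential into its ``derivation'' part $A$ and the ``bracket'' part $\dalg$. Since the Leibniz rule and $d^2=0$ are well known for $d$, the corresponding statements for $\dalg$ will follow once the behaviour of $A$ is understood.

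The easy items come first. Smoothness of $\dalg\omega$ is immediate from \eqref{eqdef_d0}, as the Lie brackets $[V_i,V_j]$ and the pairings with $\omega$ yield smooth outputs. Algebraicity is also visible from \eqref{eqdef_d0}: since no derivative of $\omega$ appears on the right-hand side, the $C^\infty(M)$-linearity of $\omega$ in each argument immediately gives $\dalg(f\omega)=f\,\dalg\omega$ for every $f\in C^\infty(M)$. The explicit formula on $\Omega^1(M)$ is the specialisation of \eqref{eqdef_d0} to $k=1$, in which only the pair $(i,j)=(0,1)$ contributes, yielding the sign $(-1)^{0+1}=-1$.

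For the Leibniz property, my strategy is to verify first that $A$ is itself a graded derivation of degree one on $\Omega^\bullet(M)$. Expanding $A(\alpha\wedge\beta)(V_0,\ldots,V_{p+q})$ by inserting the shuffle formula for $(\alpha\wedge\beta)(V_0,\ldots,\hat V_i,\ldots)$ and using the Leibniz rule $V_i(\varphi\psi)=V_i(\varphi)\psi+\varphi V_i(\psi)$ in each scalar slot yields
$$A(\alpha\wedge\beta)=A\alpha\wedge\beta+(-1)^{|\alpha|}\alpha\wedge A\beta.$$
Combining this with the Leibniz rule for $d$ gives the Leibniz rule for $\dalg=d-A$. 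I expect the main obstacle to be the careful bookkeeping of signs and shuffles in this expansion, which is purely combinatorial but tedious.

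Finally, $\dalg^2=0$ follows by reducing to generators using the Leibniz rule just established. Since $\dalg$ vanishes on $\Omega^0(M)$, only the case of one-forms needs to be treated. For $\omega\in\Omega^1(M)$, applying the formula of the lemma twice gives
$$\dalg^2\omega(V_0,V_1,V_2)=\omega\big([[V_0,V_1],V_2]-[[V_0,V_2],V_1]+[[V_1,V_2],V_0]\big),$$
which vanishes by the Jacobi identity for the bracket of vector fields. Together with the Leibniz rule, this yields $\dalg^2=0$ on all of $\Omega^\bullet(M)$, completing the proof.
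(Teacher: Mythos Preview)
Your proof is correct and follows essentially the same approach as the paper's: both use the decomposition $d=\dalg+(d-\dalg)$ (your $A$), derive the Leibniz rule for $\dalg$ from that of $d$ together with the fact that $A$ is a graded derivation (because vector fields satisfy Leibniz), and prove $\dalg^2=0$ by checking it on one-forms via the Jacobi identity and then propagating via the Leibniz rule. The only difference is cosmetic---you spell out the Jacobi computation and the derivation property of $A$ in slightly more detail than the paper's sketch.
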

\begin{proof}[Sketch of the proof]
	Since $\dalg=d-(d-\dalg)$ and for any $\omega \in \Omega^k(M),\,V_0,\ldots,V_k \in \Gamma(TM)$\begin{equation}
	\label{eq_d-d0}
		(d-\dalg)\,\omega(V_0,\ldots,V_k)
=
\sum_{i=0}^k (-1)^i V_i\left(\omega(V_0,\ldots,\hat V_i,\ldots, V_k) \right) \ ,
	\end{equation}
	the Leibniz property of $\dalg$ follows from the fact that both vector fields and $d$ satisfy the Leibniz property.

The formula on 1-forms is easily computed from \eqref{eqdef_d0} by taking $k=1$. 
This formula, together with the Jacobi identity for the commutator bracket, implies $\dalg^2=0$ on $\Omega^1 (M)$. Recursively and by the Leibniz rule, we get that $\dalg^2=0$ on forms of any degree,  since for any $\alpha\in\Omega^k(M)$ and any $\beta\in\Omega^\bullet(M)$
\begin{align*}
    \dalg^2 (\alpha\wedge \beta) =&\dalg\big(\dalg\alpha\wedge\beta+(-1)^k\alpha\wedge\dalg\beta\big) \\=&(\dalg^2 \alpha)\wedge \beta
+(-1)^{k+1} \dalg\alpha\wedge\dalg  \beta+(-1)^k\dalg\alpha\wedge\dalg\beta+\alpha\wedge\dalg^2\beta=0\,.
\end{align*}
\end{proof}

\subsection{Some natural concepts}

In this subsection, we recall some basic concepts in differential geometry.

\subsubsection{Frame and coframe}\label{frame and coframe def}
 Let $E$ be a (real, finite dimensional, and smooth) vector bundle over a (smooth) manifold $M$ with $\dim M=n$.
	A \emph{frame} for $E$ is a smooth section $(S_1, \ldots, S_n)$ of $E\times \ldots \times E=E^n$ such that $(S_1(x),\ldots , S_n(x))$ is a basis of $E_x$ for every $x\in M$.
 A \emph{coframe} for $E$ is frame for the dual vector bundle $E^*$.
 
Throughout this paper, if the bundle $E$ is not specified, we will take $E$ to be the tangent bundle $TM$ over the smooth manifold $M$ and $E^*$ to be the cotangent bundle.
Frames for $TM$ may not exist globally on the whole manifold $M$, but they can always be constructed locally, i.e. over an open neighbourhood of every point in $M$.

\subsubsection{Algebraic maps on bundles}
\label{subsubsec_def_alg}
Let $E$ and $F$ be two smooth vector bundles over a manifold $M$. 
We say that a smooth map $T:\Gamma(E) \to \Gamma(F)$ is linear and algebraic when, for each $x\in M$, there exists a linear map $(T)_{x}: E_x  \to F_x$
satisfying 
$T(\alpha )(x)= (T)_{x}(\alpha (x))$  for any $\alpha\in E$, $x\in M$.
We will call $(T)_{x}$ the pointwise restriction of $T$ above $x\in M$.

\begin{remark}
    \label{rem_eqdef_alg}
 A (smooth) differential operator of order 0 is an algebraic linear map.
\end{remark}

\subsubsection{Metric on bundles}

Let $E$ be a vector bundle over a manifold $M$. 
By definition, a metric $g$ on $E$ is a family of 
 scalar products $g_x :=(\cdot, \cdot)_{E_x}$  on $E_x$ depending smoothly on $x\in M$. The smooth dependence means that 
 $$
 g  \in \Gamma ({\rm Sym}(E \otimes E))\,.
 $$
When $E$ is the tangent bundle $TM$ of a smooth manifold $M$, the metric is said to be Riemannian and
the couple $(M,g)$ is referred to as a Riemannian manifold.

\subsection{Filtered tangent bundles}
\label{sec_filtrationTM}

In this section, we  
consider an $n$-dimensional smooth manifold $M$ whose tangent bundle $TM$ is filtered  by vector subbundles 
\begin{equation}
    \label{eq_filtredTM}
M\times \{0\} = H^0 \subseteq H^1 \subseteq \ldots \subseteq H^s =TM\,.
\end{equation}
Our aim is to extend the concepts of Section \ref{sec_filtrationsVS} to this setting.
After an appropriate choice of indices like in \eqref{def strict filtration}, one can define the smooth vector bundle 
$$
\gr( TM) =  \oplus_{i=1}^{s} H^{i}/H^{i-1}=\oplus_{j=1}^{s_0}H^{w_j}/H^{w_{j-1}}\ , 
$$
with $n_j := \dim H^{w_j}/H^{w_{j-1}}>0, \ j=1,\ldots, s_0$, and $dim(M)=n=n_1+n_2+\cdots+n_{s_0}$.
\subsubsection{Adapted frames}
\label{subsec_TMfiltered}

The notion of adapted bases in the setting of filtered vector spaces naturally leads to the notion of adapted frames and coframes.

\begin{definition}
A frame $\bX=(X_1,\ldots, X_n)$ for $TM$
is said to be \emph{adapted to the filtration} \eqref{eq_filtredTM} when, for every $x\in M$, the basis $(X_1(x),\ldots,X_n(x))$ is adapted to the filtration  $T_x M =H^{s_0} _x\supseteq \ldots \supseteq H^{w_1}_x \supseteq H^{w_0}_x=\{0\}$.
	\end{definition}
	
If $\bX=(X_1,\ldots, X_n)$ is a frame for $TM$ adapted to the filtration \eqref{eq_filtredTM},
then the associated graded basis on $\gr\,(T_x M)$
$$
\langle \bX\rangle_x = (\langle X_1\rangle_x,\ldots, \langle X_n \rangle_x)\,, \ x\in M\,,
$$
 yields a frame $\langle \bX\rangle$  for $\gr (TM)$  adapted to the gradation in the following sense.
\begin{definition}
	A frame $\bY = (Y_1,\ldots , Y_n)$ for $\gr( TM)$ is \emph{adapted to the gradation}, or \emph{graded}, when
	 $Y_1,\ldots, Y_{n_1}$ is a frame for $H^{w_1}/H^{w_0}$, 
	 $Y_{n_1+1},\ldots, Y_{n_2+n_1}$ is a frame for $H^{w_2}/H^{w_1}$, and so on.
\end{definition}

By duality, we obtain the following decreasing filtration of the cotangent bundle $T^*M$ by vector bundles
\begin{equation}
	\label{eq_filtrationTM*}
T^*M = (H^{w_0})^\perp \supseteq (H^{w_1})^\perp \supseteq \ldots
\supseteq (H^{w_{s_0}})^\perp = M\times \{0\}\,,
\end{equation}
by considering the annihilators $(H^{w_j})^\perp$ of $H^{w_j}$ for $j=1,\ldots,s_0$, as follows.

\begin{definition}
A coframe $\Theta$ for $TM$
 is \emph{adapted to the filtration} when, for every $x\in M$, 
 the basis $\Theta(x):=(\theta^1(x), \ldots, \theta^n(x))$ is adapted to the filtration of $T_x^* M = (H^{w_0}_x)^\perp \supseteq (H^{w_1}_x)^\perp \supseteq \ldots
\supseteq (H^{w_{s_0}}_x)^\perp = \{0\}$.
\end{definition}
This definition is equivalent to saying that, at each point $x\in M$, the final $n_{s_0}$ covectors annihilate $H^{w_{s_0-1}}_x$. Furthermore, these final $n_{s_0}$ covectors together with the preceding $n_{s_0-1}$ covectors annihilate $H^{s_0-2}_x$, and so on.
The associated frame
 $\langle \Theta\rangle=(\langle \theta^1\rangle, \ldots,\langle \theta^n\rangle)$ is a frame for  $\gr(T^* M)$ that is adapted to the gradation in the following sense.
 \begin{definition}
	A frame $\Xi = (\xi^1,\ldots , \xi^n)$ for $\gr( T^*M)$  is  said to be \emph{adapted to the gradation} (or  simply \emph{graded}) when 
	$\xi^{n},\ldots, \xi^{n-n_{s_0} +1}$ is a frame for $(H^{w_{s_0-1}})^\perp /  (H^{w_{s_0}})^\perp$,  
	 $\xi^{n - n_{s_0}},\ldots,$ $ \xi^{n-n_{s_0-1}+1}$ is a frame for $(H^{w_{s_0-2}})^\perp/(H^{w_{s_0-1}})^\perp$, and so on.
\end{definition}

One can readily check that 
if $\bX$ is a frame adapted to a gradation for $TM$, then its dual $\Theta$  is an adapted  coframe for $TM$.
Conversely, if $\Theta$ is a coframe for $T^\ast M$ adapted to a gradation of $TM$, then its dual $\Theta^\ast=\bX$ is a graded frame for $TM$. 

As mentioned in Subsection \ref{frame and coframe def}, frames for $TM$ can always be constructed locally. Furthermore, through a pivoting process, one may easily construct a local frame for $TM$ adapted to a given gradation.
The inverse function theorem implies that, given a local frame $\bY$ of $\gr(TM)$ adapted to the gradation, we can then construct a local frame $\bX$ of $TM$ adapted to the filtration such that $\langle \bX \rangle =\bY$.

\subsubsection{Weights of forms}

Building on top of Section \ref{subsec_TMfiltered},
we define below the natural notion of forms of weights at least $w$ in this context.

Given an increasing filtration \eqref{eq_filtredTM} of $TM$, we denote by $H^j$ the set of (based) vectors of weight at most $j$ (shortened with $\leq j$).
By duality, we say that the (based) covectors in $(H^{j-1})^\perp $ have weight at least $j$ (shortened as $\geq j$).
We extend this vocabulary to the space of smooth sections $\Gamma(TM)$ and $\Gamma (T^*M) =\Omega^1 (M)$, that is vector fields and 1-forms respectively.

With our definition, a 1-form $\alpha\in \Omega^1 (M)$ is of weight $\geq w$ when $\alpha (V) = 0$ for any $V\in \Gamma(H^{w-1})$.
We denote the space of 1-forms of weight at least $j$ by
$$
\Omega^{1,\geq j}  (M):= \Gamma((H^{j-1})^\perp), \qquad j=1,2,\ldots, s_0+1\,.
$$
The filtration in \eqref{eq_filtrationTM*} of the cotangent bundle then determines the following strict filtration  
%may be rephrased as 
% $$
% \Omega^1 (M)=
% \Omega^{1,\geq 1} (M)
% \supseteq
% \Omega^{1,\geq 2}(M)
% \supseteq \ldots \supseteq
% \Omega^{1,\geq s+1}(M) =\{0\}.
% $$
$$
\Omega^1 (M)=\Omega^{1,\geq w_1}( M)
\supsetneq
\Omega^{1,\geq w_2}(M)
\supsetneq \ldots \supsetneq
\Omega^{1,\geq w_{s_0+1}}(M) =\{0\}.
$$
\begin{ex}\label{first example}
	If $\Theta:=(\theta^1, \ldots , \theta^n)$ is a frame for $T^\ast M$ adapted to the filtration \eqref{eq_filtrationTM*}, then all the 1-forms $\theta^1,\ldots,\theta^{n}$ are of weight $\geq w_1$. The 1-forms $\theta^{n_1+1},\ldots,\theta^{n}$ are of weight $\ge w_2$, and more generally
$\theta^{n_1+\cdots+n_{i-1} +1},\ldots,  \theta^{n}$ are of weight $\geq w_i$. By taking $n_0=\dim H^{w_0}=0$, we can state this as
$$
\theta^j \in\Omega^{1,\geq w_i} (M), \quad j=n_0+\cdots +n_{i-1}+1,\ldots,n\ , \quad 
i=1,\ldots, s_0+1.
$$
\end{ex}

In general, we will not have a global coframe as in Example \ref{first example}, but just local ones.

For $k=0$, we set 
$$
C^\infty (M) = \Omega^0 (M) := \Omega^{0,\geq 0}(M) ,
\qquad\mbox{and}\qquad
\Omega^{0,\geq 1} (M) :=\{0\}.
$$

For $k=1,2,\ldots$, we  define the space of $k$-forms of weight at least $w$ (shortened as $\geq w$) by
$$
\Omega^{k,\geq w}(M)
:=
\oplus _{i_1+\ldots+i_k \geq w} \Omega^{1,\geq i_1}(M)\wedge \ldots \wedge 
 \Omega^{1,\geq i_k}(M).
 $$
 In other words, a $k$-form is of weight at least $w$ when it can be written locally as a linear combination of $k$-wedges of 1-form of weights at least $i_1,\ldots,i_k$ with $w\leq i_1+\ldots+i_k$. 
 From the definition, it follows that a $k$-form $\alpha\in \Omega^k(M)$ is of weight $\geq w$ when 
 $$
 \forall\ V_1\in \Gamma(H^{i_1}), \ldots, V_k\in \Gamma(H^{i_k})\qquad
 i_1+\ldots+i_k <w \Longrightarrow
 \alpha (V_1,\ldots ,V_k) = 0.
 $$

\begin{ex}
\label{ex_Thetageqweight}
Consider an adapted coframe
$\Theta$ for $TM$.
We will often use the shorthand 
$$
\Theta^{\wedge I} = \theta^{i_1}\wedge \ldots \wedge \theta^{i_k}
$$
for the multi-index $I=(i_1,\ldots,i_k)$, always assuming $i_1<\ldots <i_k$. 
Then $\Theta^{\wedge I} $  is of weight $\geq \upsilon_{1}+\ldots +\upsilon_{k}$, where each $\theta^{i_j}$ is of weight $\ge \upsilon_j$.
\end{ex}

\begin{definition}\label{def free basis k forms}
The family $(\Theta^I)_I$ where $I$ runs above all multi-indices of length $k$ (taken with strictly increasing indices as in example \ref{ex_Thetageqweight} above) is a free basis of the $C^\infty(M)$-module $\Omega^\bullet (M)$   called the basis associated with the coframe $\Theta$.
\end{definition}

By construction, we have the inclusion 
$\Omega^{k,\geq w} (M) \wedge  \Omega^{j,\geq w'}(M)
\subseteq \Omega^{k+j,\geq w+ w'}(M)$ for any $k,j\ge 0$. Moreover, given the increasing filtration \eqref{def strict filtration} of $TM$, for any degree $k\ge 0$
$$
\Omega^{k,\geq k \cdot w_{s_0}+1}(M) =\{0\}\ \text{ and }\ w\leq w' \ \Longrightarrow \
\Omega^{k,\geq w} (M) \supseteq \Omega^{k,\geq w'} (M)\,.
$$
In particular, the module $\Omega^k(M)$ over the ring $C^\infty (M)$ admits the filtration 
\begin{align}\label{filtration on k forms by weight}
    \Omega^k(M) = \Omega^{k,\geq 0}  (M)
\supset \Omega^{k,\geq 1} (M)\supset \ldots \supset
\Omega^{k,\geq w} (M)\supset \ldots \supset \Omega^{k, \geq k\cdot w_{s_0}+1} (M)=\{0\}.
\end{align}

The basis $(\Theta^I)_I$ introduced in Definition \ref{def free basis k forms} is adapted to this filtration. 

\subsubsection{The bundle $\wedge^k \gr( T^*M)$ and its weights}
\label{subsubsec_wedgekgrTM*}

Given the filtration \eqref{eq_filtredTM}, by duality the smooth vector bundle 
$$ 
\gr (T^*M) 
= \oplus_{i=1}^s  ((H^{i-1})^\perp / (H^{i})^\perp )=
\oplus_{j=1}^{s_0}((H^{w_j-1})^\perp / (H^{w_j})^\perp )$$
is graded.
We say that an element of $\gr (T^*M)$  is of weight $j$ when it is in $(H^{j-1})^\perp / (H^{j})^\perp$.
We extend the same notion of weight to smooth sections of $\gr(T^*M)$. 

\begin{ex}
\label{ex_Theta_1formweight}
	Let us consider the same adapted coframe
$\Theta$ for $TM$ as in Example \ref{first example}.
Then $\langle \theta^{1}\rangle,\ldots,  \langle \theta^{n_1}\rangle$ have weight $w_1$, and for $i=2,\ldots,s_0$, 
the sections
 $\langle \theta^{n_1+\cdots+n_{i-1} +1}\rangle,\ldots, $ $ \langle \theta^{n_1+\cdots+n_{i}}\rangle$ of $\gr (T^*M)$ have weight $w_i$.
 If we denote by $\upsilon_j$ the  weight of $\langle \theta^j\rangle$, we have: 
\begin{equation}
	\label{eq_defupsilon_j}
	\upsilon_{n_1+\cdots+n_{i-1}+1} = \ldots = \upsilon_{n_1+\cdots+n_i} = w_i, 
	\qquad i=1,\ldots,s_0.
\end{equation}
 \end{ex}

There is a natural identification between the following smooth vector bundles
$$
\wedge^k \gr(T^*M) := \cup_{x\in M} \wedge^k \gr(T_x^*M) 
\cong \cup_{x\in M} (\wedge^k \gr(T_xM) )^* = :\wedge^k \gr(TM)^*,
$$
allowing us to consider directly $\wedge^k \gr (T^*M) $ in our discussion. 

It is straightforward to check that $\wedge^k \gr (T^*M) $ is  a module over $C^\infty(M)$ which, by Section \ref{subsubsec_gradation},
is naturally equipped with a gradation and a notion of weight. In other words, 
$$
\wedge^k \gr (T^*M)
= \oplus_{w\in \bN} \wedge^{k,w} \gr (T^*M),
$$
where the elements of weight $w$ are in the linear subbundle
$$
\wedge^{k,w} \gr (T^*M) := \sum_{i_1+\ldots + i_k=w} 
((H^{i_1-1})^\perp / (H^{i_1})^\perp )
\wedge \ldots \wedge
((H^{i_k-1})^\perp / (H^{i_k})^\perp )
.$$
Once we extend this construction to the space of smooth sections of $\wedge^\bullet\gr\,(T^\ast M)$, we get the gradation
\begin{align}\label{gradation on gr k forms by weights}
    \Gamma(\wedge^\bullet\gr\,(T^\ast M))=\oplus_{k,w\in\mathbb N_0}\ 
 \Gamma(\wedge^{k,w}\gr\,(T^\ast M))\,,
\end{align}
where the $k$-forms of weight $w$ are given by 
\begin{align*}
    \Gamma(\wedge^{k,w}\gr\,(T^\ast M))=&\sum_{i_1+\cdots+i_k=w}\Gamma((H^{i_1-1})^\perp)/\Gamma((H^{i_1})^\perp)\wedge\cdots\wedge\Gamma((H^{i_k-1})^\perp)/\Gamma((H^{i_k})^\perp)\,.
    % =&\sum_{i_1+\cdots+i_k=w}\Omega^{1,\ge i_1}(M)/\Omega^{1,\ge i_1+1}(M)\wedge\cdots\wedge\Omega^{1,\ge i_k}(M)/\Omega^{1,\ge i_k+1}(M)\\=&\Omega^{k,\ge w}(M)/\Omega^{k,\ge w+1}(M)
\end{align*}

\begin{ex}
\label{ex_Theta_kformweight}
We continue with the setting of Example \ref{ex_Theta_1formweight} and examine the case of $k$-forms, $k>1$. 
The sections
 $\langle \theta^{i_1}\rangle \wedge \ldots \wedge \langle \theta^{i_k}\rangle$ of $\wedge^k \gr (T^*M)$ are of weight 
 $$
 |(\upsilon_1,\ldots,\upsilon_k)|:=\upsilon_{i_1}+\ldots +\upsilon_{i_k}.
 $$
\end{ex}
We may use the shorthand notation $\langle\Theta\rangle^{\wedge I} = \langle\theta^{i_1}\rangle\wedge \ldots \wedge \langle\theta^{i_k}\rangle$ where $I$ is the multi-index $I=(i_1,\ldots,i_k)$ with $i_1<\ldots<i_k$. 
 
We readily check the following properties:
\begin{lemma}
\label{lem_Theta_corr}
The family $(\langle \Theta\rangle^I)_I$, where $I$ runs above all multi-indices of length $k$ and strictly increasing indices, is a basis of $\Gamma(\wedge^k \gr(T^*M))$ adapted to the gradation.
Moreover, it is the basis associated to the  basis $(\Theta^I)_I$ adapted to the filtration \eqref{filtration on k forms by weight} of $\Omega^\bullet (M)$
in the sense described in Section \ref {subsubsec_anglee}.
\end{lemma}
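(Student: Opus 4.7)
The plan is to reduce both assertions to the pointwise statement of Lemma \ref{lem_isomPsi} together with the smoothness of the coframe $\Theta$. For the first assertion, I observe that at each $x\in M$ the basis $\Theta(x)$ of $T_x^*M$ is adapted to the filtration \eqref{eq_filtrationTM*} at $x$, so by the construction recalled in Section \ref{subsubsec_anglee}, $\langle \Theta(x)\rangle$ is a graded basis of $\gr(T_x^*M)$. Standard exterior algebra then yields that $(\langle \Theta(x)\rangle^{\wedge I})_{|I|=k}$ (with strictly increasing $I$) is a basis of $\wedge^k\gr(T_x^*M)$, and Example \ref{ex_Theta_kformweight} identifies its weights with respect to the gradation of Section \ref{subsubsec_wedgekgrTM*}. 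Since $\Theta$ is a smooth coframe, so is $\langle \Theta\rangle$, and this pointwise basis assembles into a smooth frame of $\wedge^k\gr(T^*M)$, which in turn gives a $C^\infty(M)$-basis of $\Gamma(\wedge^k \gr(T^*M))$ adapted to the gradation \eqref{gradation on gr k forms by weights}.

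For the second assertion, I would apply Lemma \ref{lem_isomPsi} pointwise to obtain, for each $x\in M$, a linear isomorphism $\Psi_x:\wedge^k T_x^*M\to \wedge^k \gr(T_x^*M)$ sending $\Theta(x)^{\wedge I}$ to $\langle \Theta(x)\rangle^{\wedge I}$ and respecting the filtrations/gradations at $x$. Smoothness of $\Theta$ ensures that the $\Psi_x$ assemble into a smooth bundle isomorphism $\Psi$, whose induced $C^\infty(M)$-linear map on sections respects the filtration \eqref{filtration on k forms by weight} of $\Omega^k(M)$. The graded version $\gr(\Psi)$ then provides the canonical $C^\infty(M)$-linear isomorphism between the graded module $\gr(\Omega^k(M))$ and $\Gamma(\wedge^k\gr(T^*M))$. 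Under this identification, the equivalence class of $\Theta^{\wedge I}$ modulo $\Omega^{k,\geq w+1}(M)$, with $w=\upsilon_{i_1}+\cdots+\upsilon_{i_k}$, is precisely $\langle\Theta\rangle^{\wedge I}$. This is exactly what it means for $(\langle\Theta\rangle^I)_I$ to be the graded basis associated with $(\Theta^I)_I$ in the sense of Section \ref{subsubsec_anglee}.

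The main difficulty, if one can call it that, is bookkeeping: one must verify that the module filtration \eqref{filtration on k forms by weight} on $\Omega^k(M)$ is the section-level counterpart of the fibrewise filtration \eqref{eq_filtrationwedgekW} of $\wedge^k T^*M$, and that the indexing of weights $\upsilon_j$ from \eqref{eq_defupsilon_j} is consistent across the pointwise and module-theoretic levels. Once one is convinced that $\Psi$ is a smooth, $C^\infty(M)$-linear, filtration-preserving isomorphism, both parts of the lemma reduce to the content of Lemma \ref{lem_isomPsi} and the definitions of Section \ref{subsubsec_anglee}.
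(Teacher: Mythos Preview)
Your proposal is correct and matches the paper's approach: the paper states this lemma as something ``we readily check'' and gives no explicit proof, precisely because the argument is the routine pointwise application of Lemma~\ref{lem_isomPsi} together with smoothness of the coframe, exactly as you outline. Your bookkeeping remarks about matching the fibrewise filtration \eqref{eq_filtrationwedgekW} with the module filtration \eqref{filtration on k forms by weight} are the only subtlety, and you have identified it correctly.
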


\subsubsection{The $\gr$ operation}

\begin{definition}
A morphism 
 $D:\Omega ^\bullet (M)\to \Omega^\bullet (M)$  of $C^\infty(M)$-modules  respects the filtration \eqref{filtration on k forms by weight}, 
	when
$$
\forall  w\in \bN_0
\qquad D\, \Omega^{\bullet,\geq w} (M) \subseteq \Omega^{\bullet,\geq w} (M).
$$ 
\end{definition}

The following property follows readily 
from Section \ref{sec_filtrationsVS}:
\begin{lemma}
\label{lem_gr_modulemorphism}
	Let $D:\Omega ^\bullet (M)\to \Omega^\bullet (M)$ be a morphism of $C^\infty(M)$-modules that respects the filtration \eqref{filtration on k forms by weight}. Then the map 
$$
\gr (D) :\Gamma ( \wedge^\bullet  \gr(T^*M))
\to \Gamma ( \wedge^\bullet  \gr(T^*M)) ,
$$
is a morphism of $C^\infty(M)$-modules that respects the gradation \eqref{gradation on gr k forms by weights}.
 If in addition, $D$ is a differential operator, then $\gr(D)$ is also a differential operator of the same order or lower.

As an operation, $\gr$ is a module morphism from  $\{D:\Omega ^\bullet (M) \to \Omega^\bullet (M)  :  $ module morphism$\}$
to $\{T:\Gamma ( \wedge^\bullet  \gr(T^*M)) \to \Gamma ( \wedge^\bullet  \gr(T^*M))  :  $ module morphism$\}$.

\end{lemma}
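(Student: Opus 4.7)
The plan is to realise $\gr(D)$ as the endomorphism induced by $D$ on the associated graded object and then verify each assertion directly. The essential identification, obtained fibrewise from Lemma~\ref{lem_isomPsi} and made explicit on a local adapted coframe by Lemma~\ref{lem_Theta_corr}, reads
$$
\Gamma\big(\wedge^{k,w}\gr(T^*M)\big)\;\cong\;\Omega^{k,\geq w}(M)\big/\Omega^{k,\geq w+1}(M).
$$

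First I would construct $\gr(D)$. Because $D$ preserves $\Omega^{\bullet,\geq w}(M)$ for every $w$, it preserves in particular $\Omega^{\bullet,\geq w+1}(M)$, hence descends to a well-defined endomorphism of the quotient $\Omega^{\bullet,\geq w}(M)/\Omega^{\bullet,\geq w+1}(M)$. Assembling these quotient maps over $w$ and transporting through the isomorphism above produces $\gr(D)$; by construction it sends $\Gamma(\wedge^{\bullet,w}\gr(T^*M))$ into itself for every $w$, so it respects the gradation~\eqref{gradation on gr k forms by weights}. Since each $\Omega^{\bullet,\geq w}(M)$ is a $C^\infty(M)$-submodule, the quotient inherits a $C^\infty(M)$-module structure, and any $C^\infty(M)$-module morphism stabilising both submodules descends to a $C^\infty(M)$-module morphism of the quotient; thus $\gr(D)$ is a $C^\infty(M)$-module morphism.

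For the differential-operator claim, I would work locally on an open set $U\subseteq M$ admitting an adapted coframe $\Theta=(\theta^1,\ldots,\theta^n)$. Then $(\Theta^I)_I$ is a free $C^\infty(U)$-basis of $\Omega^\bullet(U)$ adapted to the filtration, and expanding
$$
D(f\,\Theta^I) \;=\; \sum_J D_I^J(f)\,\Theta^J,\qquad f\in C^\infty(U),
$$
yields scalar differential operators $D_I^J$ of order at most $k:=\mathrm{ord}(D)$. The hypothesis that $D$ respects the filtration forces the local matrix $(D_I^J)$ to be block-upper-triangular in the sense of Subsection~\ref{subsubsec_Mrep}: $D_I^J=0$ whenever the weight of $\Theta^J$ is strictly less than that of $\Theta^I$. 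By Lemma~\ref{lem_Theta_corr}, reducing modulo the next filtration level isolates the diagonal blocks, so that
$$
\gr(D)\big(f\,\langle\Theta\rangle^I\big) \;=\; \sum_{J\,:\,\mathrm{wt}(\Theta^J)=\mathrm{wt}(\Theta^I)} D_I^J(f)\,\langle\Theta\rangle^J,
$$
which is manifestly a local differential operator of order at most $k$.

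Finally, $\gr$ is itself a morphism of $C^\infty(M)$-modules because both addition and $C^\infty(M)$-scalar multiplication commute with passage to quotients, so $\gr(D_1+D_2)=\gr(D_1)+\gr(D_2)$ and $\gr(fD)=f\,\gr(D)$. The only step that is not purely formal is the differential-operator argument, and the main obstacle there is identifying the block-upper-triangular form of the local matrix of $D$; once that is done, extracting its block-diagonal part cannot raise the order, and every other assertion in the statement is immediate from the quotient construction.
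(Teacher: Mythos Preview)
Your argument is correct and is precisely the expansion the paper has in mind: the lemma is stated there without proof, prefaced only by ``The following property follows readily from Section~\ref{sec_filtrationsVS}'', i.e.\ from the block-upper-triangular description of maps respecting a filtration in an adapted basis. Your local-coframe computation, extracting the block-diagonal part of $(D_I^J)$ via Lemma~\ref{lem_Theta_corr}, is exactly that argument written out in the $C^\infty(M)$-module setting, so there is nothing to add.
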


By construction,  $\gr(D)=0$ if and only if $D$ increases the weight in the following sense.
\begin{definition}
	We say that a  linear map $D:\Omega ^\bullet (M) \to \Omega^\bullet (M)$ \emph{increases the weights} of  the filtration when 
$$
\forall  w\in \bN_0
\qquad D \,\Omega^{\bullet,\geq w} (M) \subseteq \Omega^{\bullet,\geq w+1} (M).
$$ 
\end{definition}
As there are only a finite number of weights, a linear map that increases the weights is necessarily nilpotent, that is
\begin{equation}
	\label{eq_N_0}
	 \gr \, D =0 \Longrightarrow \quad \exists\, N_0\in \bN
 \ \text{ such that } D^{N_0} =0.
\end{equation}
This $N_0$ depends only on $M$ and its filtration \eqref{eq_filtredTM} (via the filtrations \eqref{filtration on k forms by weight}), not on $D$. Throughout this paper, we will assume it to be a fixed integer.

\subsection{Tangent bundles equipped with a filtration and a metric}
\label{subsec_filtration+metric}
Let $M$ be a manifold whose tangent bundle admits a filtration \eqref{eq_filtredTM} by subbundles and is equipped with a metric $g_{TM}$.

\subsubsection{Identification of $\Omega^\bullet (M)$ and $\Gamma(\wedge^\bullet \gr(T^*M))$}
\label{subsubsec_natorthgrad}
\label{subsubsec_PhiM}

The considerations in Section \ref{subsec_Euclgr} show that $\gr (TM)$ inherits a metric
$g_{\gr(TM)}$.
The same section also implies
that, at each point $x\in M$, there exists an open neighbourhood $U$ of $x$ and a frame $\bX$ of $TM$  on  $U$ that is $g_{TM}$-orthonormal  and adapted to  the natural orthogonal gradation 
$$
 TM = \oplus_j^\perp G^j,
$$
where
$G^j$ is the orthogonal complement of $H^{w_{j-1}}$ in  $H^{w_j}$ for each $j=1,\ldots,s_0$.

Let $x\in M$. 
For any $j=1,\ldots,s_0$, if $\xi$ is a covector in $G^j_x$ (i.e. a linear form on $T_x M$ vanishing on every $G^k_x$, $k\neq j$), we denote by $\Phi_x(\xi)$ the corresponding covector in $H^{w_j}_x/H^{w_{j-1}}_x$ (i.e. the corresponding linear form on $\gr(T^*_x M)$  vanishing on $H^{w_k}_x/H^{w_{k-1}}_x$, $k\neq j$).
 This extends uniquely to a linear bijective map $\Phi_x:T_x^*M\to \gr(T_x^*M) $, 
 and then to a $C^\infty(M)$-module isomorphism 
 $$
 \Phi : \Omega^\bullet (M)\to \Gamma(\wedge^\bullet \gr(T^*M))
 $$
 respecting the Leibniz property.

\begin{lemma}
\label{lem_PhiM}
Let $M$ be a manifold whose tangent bundle admits a filtration by subbundles \eqref{eq_filtredTM} and is equipped with a metric $g_{TM}$.
The map $\Phi$ constructed above
is an isomorphism of $C^\infty(M)$-modules
that respects the filtrations, even when restricted 
to $\Omega^k(M) \to \Gamma(\wedge^k \gr(T^*M))$ for each $k=0,1,\ldots,n$.
Moreover, $\gr ( \Phi)$
is an isomorphism between the $C^\infty(M)$-modules
$\gr(\Omega^k(M) )$ and $\Gamma(\wedge^k \gr(T^*M))
$ that respects the gradation. 

If $\bX$  is a $g_{TM}$-orthonormal local frame of $TM$ on an open subset $U\subset M$ that is adapted to the natural orthogonal gradation $ TM = \oplus^\perp G^j$, 
then we have
$$
\Phi(\Theta^I)=\langle\Theta\rangle^I
$$
for any increasing  multi-index $I$ of length $k$, where $\Theta$ is the coframe associated to $\bX$ over $U$.
\end{lemma}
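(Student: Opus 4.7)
The strategy is to verify the claim locally through an adapted $g_{TM}$-orthonormal coframe, where the identity $\Phi(\Theta^I)=\langle\Theta\rangle^I$ becomes essentially tautological, and then to observe that the pointwise construction of $\Phi_x$ renders the conclusions frame-independent.

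The pointwise map $\Phi_x:T_x^*M\to \gr(T_x^*M)$ is a linear bijection at every $x\in M$: the orthogonal decomposition $T_xM=\bigoplus_j^\perp G_x^j$ with $G_x^j\cong H_x^{w_j}/H_x^{w_{j-1}}$ via the quotient map dualises to an identification of each summand $(G_x^j)^*$ of $T_x^*M$ with the weight-$w_j$ summand of $\gr(T_x^*M)$. Extending multiplicatively yields a pointwise bundle isomorphism $\wedge^\bullet T^*M\to \wedge^\bullet\gr(T^*M)$ preserving form-degree. To promote this to a smooth $C^\infty(M)$-module isomorphism, I would invoke the local smooth $g_{TM}$-orthonormal frame $\bX$ on $U\subset M$ adapted to the orthogonal gradation $TM=\bigoplus^\perp G^j$, whose existence is recorded in Section~\ref{subsubsec_natorthgrad}. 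For the associated coframe $\Theta$, each $\theta^j$ lies in the appropriate summand of $T^*M$, so the pointwise definition forces $\Phi(\theta^j)=\langle\theta^j\rangle$, and hence $\Phi(\Theta^I)=\langle\Theta\rangle^I$ for every increasing multi-index $I$. Smoothness of $\Phi$ on $\Omega^\bullet(U)$ follows at once, and globality is automatic since $\Phi_x$ is defined intrinsically from $g_{TM}$ and the filtration.

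For the filtered/graded compatibility, I would combine this identity with Examples~\ref{ex_Thetageqweight} and~\ref{ex_Theta_kformweight}: $\Theta^I$ belongs to $\Omega^{k,\geq\upsilon_{i_1}+\cdots+\upsilon_{i_k}}(U)$, while $\langle\Theta\rangle^I$ has weight exactly $\upsilon_{i_1}+\cdots+\upsilon_{i_k}$. Since $(\Theta^I)_I$ and $(\langle\Theta\rangle^I)_I$ are bases adapted respectively to the filtration~\eqref{filtration on k forms by weight} and the gradation~\eqref{gradation on gr k forms by weights} (Lemma~\ref{lem_Theta_corr}), $\Phi$ maps $\Omega^{k,\geq w}(U)$ into the corresponding filtered piece of $\Gamma(\wedge^k\gr(T^*M))$. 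The induced map $\gr(\Phi)$ then sends the graded basis $\langle\Theta^I\rangle$ of $\gr(\Omega^k(M))$ to $\langle\Theta\rangle^I$, hence is a gradation-respecting $C^\infty(M)$-module isomorphism onto $\Gamma(\wedge^k\gr(T^*M))$ (the latter coinciding with its own associated graded, being already graded).

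The main obstacle I anticipate is purely the smoothness of the adapted orthonormal frame: this is the only analytic input, relying on the smoothness of both the filtration and the metric so that a fibrewise Gram--Schmidt procedure produces a genuinely smooth orthonormal frame adapted to the orthogonal gradation. Every remaining assertion reduces to algebraic bookkeeping organised around the identity $\Phi(\Theta^I)=\langle\Theta\rangle^I$.
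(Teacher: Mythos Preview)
Your proposal is correct and follows precisely the approach implicit in the paper's construction. The paper does not give an explicit proof of this lemma: the properties are taken to follow directly from the pointwise definition of $\Phi_x$ in Section~\ref{subsubsec_PhiM} together with the existence of a local $g_{TM}$-orthonormal frame adapted to the orthogonal gradation, and the remarks immediately after the lemma (that $\Phi$ is algebraic with $\Mat_{\Theta^k}^{\langle\Theta\rangle^k}(\Phi)=\id$) amount to the same computation you carry out. Your write-up simply makes this explicit, invoking Lemma~\ref{lem_Theta_corr} and Examples~\ref{ex_Thetageqweight}, \ref{ex_Theta_kformweight} to track the filtration and gradation, which is exactly the bookkeeping the paper leaves to the reader.
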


Note that the map $\Phi$ is in fact algebraic, with 
$$
\Phi_x :\Gamma(\wedge^\bullet T_x^*M) \to \Gamma(\wedge^\bullet \gr(T^*M))
\quad\mbox{defined  via} \quad
\Phi_x (\Theta^I(x)) = \langle \Theta\rangle^I(x),
$$
using the notation of Lemma \ref{lem_PhiM}.
When restricted to $\Omega^1(M)$, the matrix representing the map $\Phi$  is just the identity matrix of $\bR^n$:
$$
	\Mat_{\Theta}^{\langle \Theta\rangle} (\Phi:\Omega^1(M) \to \Gamma(\gr(T^\ast M))) = \id.
$$
Moreover, for each $k=2,\ldots,n$, using the notation $\Theta^k = (\Theta^I)_I$ and $\langle \Theta\rangle^k = (\langle \Theta\rangle^I)_I$ for the corresponding  bases of $\Omega^k (M)$ and of $\Gamma(\wedge^k \gr(T^*M))$, 
the matrix representing $\Phi\colon\Omega^k(M)\to\Gamma(\wedge^k\gr(T^\ast M))$ is the identity matrix of $\bR^{\dim \Omega^k( M)}$:
$$
	\Mat_{\Theta^k}^{\langle \Theta\rangle^k} (\Phi) = \id\,.
$$

Intertwining with $\Phi$ provides forms with a reverse operation to the operation $\gr$ described  in Lemma \ref{lem_gr_modulemorphism}:
\begin{lemma}
\label{lem_Phi}
We continue with the setting of Lemma \ref{lem_PhiM}.
	\begin{enumerate}
		\item If  $T: \Gamma(\wedge^\bullet \gr(T^*M)) \to \Gamma(\wedge^\bullet \gr(T^*M)) $ is a morphism of $C^\infty(M)$-modules, 
then 
$$
T^{\Phi}:=\Phi^{-1}\circ T\circ \Phi :\Omega^\bullet (M) \to \Omega^\bullet (M)
$$
	is a morphism of $C^\infty(M)$-modules.
 If in addition $T$ is a differential operator, then $T^\Phi$ is also a differential operator of the same order. \label{part 1, lemma 4.16}
	\item If  $T:\Gamma(\wedge^\bullet \gr(T^*M)) \to \Gamma(\wedge^\bullet \gr(T^*M))$ is a morphism of $C^\infty(M)$-modules that  respects the gradation, 
then 
$T^{\Phi} :\Omega^\bullet (M) \to \Omega^\bullet (M)$ respects the filtration and we have
$$
\gr (T^{\Phi}) = T.
$$\label{part 2, lemma 4.16}
\item \label{part 3, lemma 4.16}
If $D:\Omega^\bullet (M) \to \Omega^\bullet (M)$ respects the filtration, then 
$D- ( \gr( D) )^\Phi $ strictly increases weights, and so 
$$
\gr( D)= \gr\left(  ( \gr( D) )^\Phi\right).
$$
	\end{enumerate}
\end{lemma}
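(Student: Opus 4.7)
The plan is to treat the three parts sequentially, using that $\Phi$ is a filtration-respecting $C^\infty(M)$-module isomorphism (Lemma \ref{lem_PhiM}) to transfer algebraic structure back and forth between $\Omega^\bullet(M)$ and $\Gamma(\wedge^\bullet \gr(T^*M))$.

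For part \ref{part 1, lemma 4.16}, I would simply observe that $\Phi$ and $\Phi^{-1}$ are both $C^\infty(M)$-linear by Lemma \ref{lem_PhiM}, so $T^\Phi = \Phi^{-1} \circ T \circ \Phi$ inherits $C^\infty(M)$-linearity from $T$. For the differential-operator claim, $\Phi$ is algebraic, hence a differential operator of order $0$ by Remark \ref{rem_eqdef_alg}, and composing with order-$0$ differential operators preserves the differential order.

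For part \ref{part 2, lemma 4.16}, the decisive observation is that, via $\Phi$, the filtration $\Omega^{\bullet, \geq w}(M)$ corresponds exactly to the filtration $\oplus_{w' \geq w} \Gamma(\wedge^{\bullet, w'} \gr(T^*M))$ induced by the gradation on the target. This is immediate from $\Phi(\Theta^I) = \langle\Theta\rangle^I$ (Lemma \ref{lem_PhiM}) together with Lemma \ref{lem_Theta_corr}. A gradation-respecting $T$ automatically respects this induced filtration, so $T^\Phi$ respects the filtration on $\Omega^\bullet(M)$. For the identity $\gr(T^\Phi) = T$, I would use the canonical identification (valid for any graded space equipped with its induced filtration) between the associated graded of $\Gamma(\wedge^\bullet \gr(T^*M))$ and $\Gamma(\wedge^\bullet \gr(T^*M))$ itself, under which a gradation-respecting $T$ coincides with its own associated graded. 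Functoriality of $\gr$ then yields $\gr(T^\Phi) = \gr(\Phi)^{-1} \circ \gr(T) \circ \gr(\Phi) = \gr(\Phi)^{-1} \circ T \circ \gr(\Phi)$, which recovers $T$ after the identification via the isomorphism $\gr(\Phi)$ from Lemma \ref{lem_PhiM}.

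Part \ref{part 3, lemma 4.16} follows by combining the first two. Since $D$ respects the filtration, $\gr(D)$ is a gradation-respecting $C^\infty(M)$-module morphism (Lemma \ref{lem_gr_modulemorphism}), so $(\gr(D))^\Phi$ respects the filtration with $\gr((\gr(D))^\Phi) = \gr(D)$ by part \ref{part 2, lemma 4.16}. Linearity of $\gr$ gives $\gr(D - (\gr(D))^\Phi) = 0$, which is precisely the statement that $D - (\gr(D))^\Phi$ strictly increases weights. The main subtlety I expect is the careful bookkeeping in part \ref{part 2, lemma 4.16}, distinguishing between the intrinsic gradation on $\Gamma(\wedge^\bullet \gr(T^*M))$ and the associated graded obtained by taking $\gr$ of its own induced filtration. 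Everything can be verified explicitly by passing to a local $g_{TM}$-orthonormal adapted frame, where $\Phi$ and $\gr(\Phi)$ are both represented by identity matrices in the bases $(\Theta^I)_I$ and $(\langle \Theta\rangle^I)_I$.
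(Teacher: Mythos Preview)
Your proposal is correct and follows essentially the same approach as the paper, which simply states that ``these properties are easily checked on $\Theta^I$ and $\langle \Theta\rangle^I$ having fixed a local orthonormal adapted coframe $\Theta$.'' Your write-up supplies the details (functoriality of $\gr$, the identification of a graded space with its own associated graded) that the paper leaves implicit, and your closing remark about verifying everything in the bases $(\Theta^I)_I$ and $(\langle\Theta\rangle^I)_I$ where $\Phi$ is represented by the identity matrix is exactly the paper's one-line proof.
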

\begin{proof}
	These properties are easily checked on $\Theta^I$ and $\langle \Theta\rangle^I$ having fixed a local orthonormal adapted coframe $\Theta$.
\end{proof}

\subsubsection{Scalar products on $\wedge^\bullet T_x^*M$ and of $\wedge^\bullet \gr(T_x^*M)$}
\label{subsubsec_scpdt}
For each $x\in M$, the scalar products $g_{T_x M}$ and $g_{\gr(T_x M)}$  on $T_x M$ and $\gr (T_x M)$ respectively induce  scalar products on their  duals $T_x^* M$ and $\gr(T_x M)^*\cong\gr(T_x^\ast M)$, and then scalar products $g_{\wedge^\bullet T_x ^*M}$ on $\wedge^\bullet T_x^* M$ and 
$g_{\wedge^\bullet \gr(T_x^* M)}$ on $\wedge^\bullet \gr(T_x^* M)$
via the formula in \eqref{eq_scpdt_det} below and 
the orthogonal decompositions:
$$
\wedge^\bullet T_x^* M = \oplus^\perp \wedge^k T_x^* M
\quad\mbox{and}\quad
\wedge^\bullet \gr(T_x^* M) = \oplus^\perp \wedge^k \gr(T_x^* M)\,.
$$
We will denote all these scalar product as $\langle\cdot,\cdot\rangle$, their meaning being clear from the context or from an indexation.
We have 
\begin{equation}
	\label{eq_scpdt_det}
	\langle \alpha_1\wedge \ldots\wedge \alpha_k,  \beta_1\wedge\ldots\wedge \beta_k\rangle_{\wedge^k T_x^*M} := \det\left( \langle \alpha_i,\beta_j\rangle_{T_x^*M}\right)_{1\leq i,j\leq k},
\end{equation}
where 
$\alpha_1,\ldots,\alpha_k,\beta_1\ldots,\beta_k\in T_x ^*M$, 
and similarly for 
$ \gr(\wedge ^kT_x^*M)\cong \wedge^k\gr(T_x^\ast M)$.

We check readily that 
the map $\Phi_x$ defined above sends the scalar product of  $\wedge^\bullet T_x^* M$ onto the one of $\wedge^\bullet \gr(T_x^* M)$:
\begin{equation}
\label{eq_scalarpdtPhi}
\Phi_x\colon\wedge^\bullet T_x^\ast M\to \wedge^\bullet\gr(T_x^\ast M)\ ,\quad 
    \langle \Phi_x (\alpha),\Phi_x(\beta)\rangle = \langle \alpha,\beta\rangle\ \forall\, \alpha,\beta\in \wedge^\bullet T_x^* M\,.
\end{equation}

Given an orthonormal local frame $\bX$ or equivalently an orthonormal local coframe $\Theta$, we check readily that  
the bases $\Theta^k(x)=(\Theta^I)_I(x)$
and $\langle \Theta^k\rangle(x)=(\langle\Theta^I\rangle)_I(x)$ of 
$\wedge^k T_x^*M$ and of $\wedge^k \gr(T_x^*M)$ respectively are orthonormal.
We will use the following notation to denote the elements of the bases in degree 0 and $n$:
$$
1=\Theta^\emptyset \text{ for }\wedge^0T^\ast_xM\ \text{ and }\vol_{\Theta}(x):=\theta^1(x)\wedge \ldots \wedge \theta^n(x) \text{ for }\wedge^nT_x^\ast M
$$
and 
$$
1=\langle \Theta\rangle^\emptyset\text{ for }\wedge^0\gr(T_x^\ast M)\text{ and } 
\langle \vol_{\Theta}\rangle(x) :=
\langle \theta^1\rangle(x)\wedge \ldots \wedge \langle\theta^n\rangle(x)\text{ for }\wedge^n\gr(T_x^\ast M)\,.
$$

With these scalar products, we can now define transposes and obtain some of their properties:
 if $T$ and $D$ are  algebraic maps acting linearly on 
$\wedge^\bullet \gr(T^* M)$ and $\Omega^\bullet (M)$ respectively, then 
we define 
their transpose $T^t$ and $D^t$
as the algebraic maps acting linearly on $\wedge^\bullet \gr(T^* M)$ and $\Omega^\bullet (M)$ by their pointwise transpose, that is,  $(T^t)_x$  and $(D^t)_x$ are the transpose of $T_x$ and $D_x$ for the scalar products $g_{\wedge^\bullet \gr(T_x^* M)}$ and $g_{\wedge^\bullet T_x^* M}$ at each point $x\in M$.
We check readily that 
if $T$ is an algebraic map acting linearly on 
$\wedge^\bullet \gr(T ^*M)$ respecting the gradation,
then so is $T^t$. Moreover,
the map
$D:= \Phi^{-1} \circ T \circ \Phi$
and its transpose $D^t$
are both algebraic map acting linearly on $\Omega^\bullet (M)$  respecting the gradation, and satisfy:
$$
D^t = \Phi^{-1}\circ T^t \circ \Phi 
\qquad\mbox{and}\qquad 
\gr (D^t) = (\gr(D))^t.
$$

\subsubsection{Hodge-star, scalar product on $\Omega_c^\bullet(M)$ and $\Gamma_c(\wedge^\bullet \gr(T^*M))$, and transpose}
\label{subsubsec_star}
The Hodge star operator on $\wedge^\bullet T_x^* M$ or $\wedge^\bullet \gr(T_x^* M)$
is the map defined via 
$$
\star : \wedge^k T_x^* M
 \overset {\cong} \longrightarrow 
 \wedge^{n-k} T_x^* M\,, 
\ 
\alpha\wedge \star\beta := \langle\alpha,\beta\rangle 
\vol_{\Theta}(x)
\ \forall\,\alpha,\beta \in \wedge^k T_x ^*M, \ k=1,\ldots,n\,,
$$
and similarly on $\wedge^\bullet \gr(T_x^* M)$.
As $\Phi_x$ maps the scalar product of $\wedge^\bullet T_x^* M $ to the one of $\wedge^\bullet \gr(T_x^* M)$, (see \eqref{eq_scalarpdtPhi}),  it also maps the Hodge star operator of $\wedge^\bullet T_x^* M $ to the one of $\wedge^\bullet \gr(T_x ^*M)$:
\begin{equation}
\label{eq_*Phi}
\forall \alpha,\beta\in \wedge^\bullet T_x^* M  \qquad
\star \Phi_x (\alpha) = \Phi_x(\star \alpha).
\end{equation}

The $\star$ operator naturally extends to an algebraic operator 
on $\Omega^\bullet (M)$ and on $\Gamma(\wedge^\bullet \gr(T^* M))$
respectively. Moreover, from the definition of $\star$, we have that
$$
\alpha \wedge \star \beta = \beta \wedge \star \alpha \quad \mbox{on} \ 
\Omega^\bullet (M) \ \mbox{and}\ \Gamma(\wedge^\bullet \gr(T^* M)),
$$
and 
$$
\star 1=\vol_\Theta, \ 
\star \vol_\Theta =1, 
\quad\mbox{and}\quad
\star 1=\langle\vol_\Theta\rangle, \ 
\star \langle\vol_\Theta\rangle =1.
$$
Moreover, if a  local orthonormal adapted coframe $\Theta$ is fixed, the Hodge star operators on $\Theta^I$ and $\langle \Theta\rangle^I$  with multi-index $I=(i_1,\ldots,i_k)$
are given by
$$
\star (\Theta^I)
=
(-1)^{\sigma(I)}
\Theta^{\bar I}, 
\qquad 
\star (\langle\Theta\rangle^I)
=
(-1)^{\sigma(I)}
\langle\Theta\rangle^{\bar I}
$$
where $\bar I = (\bar i_1,\ldots,\bar i_{n-k})$ with $\bar i_1<\bar i_2<\ldots <\bar i_{n-k}$ is the multi-index complementing $I$, and $(-1)^{\sigma(I)}$ is the sign of the permutation $\sigma(I)=i_1\cdots i_k\,\bar i_1\cdots\bar i_{n-k}$.
This completely characterises the Hodge star operators on both  $\Omega^\bullet (M)$ and on $\Gamma(\wedge^\bullet \gr(T^* M))$.

The previous properties imply readily that  for any $\alpha,\beta$ in either $\Omega^k (M)$ or $\Gamma(\wedge^k \gr(T^* M))$, we have
$$
\star\star \beta = (-1)^{k(n-k)}\beta
\qquad\mbox{and}\qquad\langle\star \alpha,\star \beta\rangle= \langle\alpha,\beta\rangle\, .
$$

We can now define the so-called $L^2$-inner products on 
the subspaces $\Omega^\bullet_c(M)$ and  $\Gamma_c (\wedge^\bullet \gr(T^*M))$ of forms with compact support in 
 $\Omega^\bullet(M)$ and $\Gamma (\wedge^\bullet \gr(T^*M))$ respectively  via
$$
\langle\alpha,\beta\rangle_{L^2}
:= \int_M \alpha \wedge \star \beta
= \int_M \langle \alpha , \beta\rangle\, \vol
\ ,\ \forall\, 
\alpha, \beta\in \Omega^\bullet_c(M)  \text{ or }
 \Gamma_c (\wedge^\bullet \gr(T^*M))\, .
$$
We have $\Phi (\Omega^\bullet_c(M)) = \Gamma_c (\wedge^\bullet \gr(T^*M))$ and
\begin{equation}
\label{eq_L2scalarproductPhi}
\forall \alpha,\beta\in \Omega^\bullet_c(M)  \qquad
\langle\alpha,\beta\rangle_{L^2}
=
\langle \Phi(\alpha),\Phi(\beta)\rangle_{L^2}.
\end{equation}

\subsubsection{Properties of the transpose for the $L^2$-inner product}
The above definitions of the Hodge operator and of the scalar product show that if $D$ is a differential operator on either $\Omega^\bullet (M)$ or $\Gamma(\wedge^\bullet \gr(T^* M))$, then so are $\star D$ and $D\star$, as well as the formal transpose $D^t$ defined as
$\langle D\alpha,\beta\rangle_{L^2}=\langle\alpha,D^t\beta\rangle_{L^2}$ for any $\alpha\in \Omega_c^k(M)$ or $\Gamma_c(\wedge^k\gr(T^\ast M))$ and any $\beta\in\Omega^{k+1}_c(M)$ or $\Gamma_c(\wedge^{k+1}\gr(T^\ast M))$.

In the particular case of the de Rham differential $d$ on forms, we observe that the transpose of $d^{(k)} = d :\Omega^k(M) \to \Omega^{k+1}(M)$ is a differential operator that satisfies
$$
d^{(k,t)}  = (-1)^{kn +1} \star  d^{(n-k-1)} \star\colon\Omega^{k+1}(M)\to\Omega^k(M)\,.
$$ 
We can also consider the transpose of the algebraic linear map $\tilde d^{(k)} = \tilde d : \Omega^k (M)\to \Omega^{k+1}(M)$ of Subsection \ref{subsec_ddalg}. 
The linear map $\tilde d^t$ is then  algebraic and satisfies
$$
\tilde d^{(k,t)}  = (-1)^{kn +1} \star \tilde d^{(n-k-1)} \star\colon\Omega^{k+1}(M)\to\Omega^{k}(M)\,.
$$ 

\smallskip

It is important to realise that the transpose may not respect the filtration in the following sense:
for a general differential operator $D\colon\Omega^\bullet(M)\to\Omega^\bullet(M)$,
it is true that the transpose  $D^t$ is a differential operator, but   $D$  respecting the filtration \eqref{filtration on k forms by weight} will not imply always that  $D^t$ does so.
This may not be true 
even for $d$ and $\tilde d$,  and two explicit `counter-examples' of this situation are given in Appendix \ref{sec_app_ex}. However, the following lemma  guarantees that $D^t$ will also respect the filtration when $D$ is of the form $D=T^\Phi  = \Phi^{-1} \circ T \circ \Phi$ with $T$  a homomorphism  of the $C^\infty(M)$-module $\Gamma(\wedge^\bullet \gr(T^*M))$:

\begin{lemma}
\label{lem_grDt} We continue with the setting of Lemma \ref{lem_Phi}, where $T:\Gamma(\wedge^\bullet \gr(T^*M)) \to \Gamma(\wedge^\bullet \gr(T^*M))$ is a morphism  of $C^\infty(M)$-modules.
We consider the map 
$D:=T^\Phi  = \Phi^{-1} \circ T \circ \Phi$
 acting on $\Omega^\bullet (M)$.
Let us further assume that $T$ respects the gradation.
\begin{enumerate}
    \item The $C^\infty(M)$-module morphism $\star T\star\colon\Gamma(\wedge^\bullet \gr(T^*M)) \to \Gamma(\wedge^\bullet \gr(T^*M))$ also respects the gradation,
and $ (\star T \star)^\Phi = \star T^\Phi \star.$
Consequently, the $C^\infty(M)$-module morphism $D:=T^\Phi:\Omega^\bullet (M) \to \Omega^\bullet (M) $ respects the filtration and we have $\gr (\star D \star) =\star \gr(D) \star  $.
\label{part 1, Lemma 4.17}
\item
If in addition, $T$ is a differential operator, then $T^t$ is also a differential operator of the same order as $T$, it respects the gradation, and satisfies
$(T^t)^\Phi = (T^\Phi)^t$.
Consequently, the differential operators on smooth forms $D:=T^\Phi$ and its transpose $D^t$ respect the filtration, and $\gr (D^t) = (\gr(D))^t.$\label{part 2, Lemma 4.17}
\end{enumerate}
\end{lemma}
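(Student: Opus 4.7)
The plan is to establish Part 1 by tracking how the Hodge star shifts weights, and then to establish Part 2 by combining that algebraic step with the $L^2$-isometry property \eqref{eq_L2scalarproductPhi} of $\Phi$.

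For Part 1, I would first show that the Hodge star on $\Gamma(\wedge^\bullet \gr(T^*M))$ shifts weights by a fixed constant. Fix a local orthonormal adapted coframe $\Theta$ as in Lemma~\ref{lem_PhiM}. From the formula $\star \langle \Theta\rangle^I = (-1)^{\sigma(I)}\langle \Theta\rangle^{\bar I}$ recalled in Subsection~\ref{subsubsec_star} and the weight count of Example~\ref{ex_Theta_kformweight}, the section $\star\langle \Theta\rangle^I$ has weight $Q - \sum_{i\in I}\upsilon_i$, where $Q:=\upsilon_1+\ldots+\upsilon_n$ is the total weight of $\langle\vol_\Theta\rangle$. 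Hence $\star$ sends $\Gamma(\wedge^{k,w}\gr(T^*M))$ into $\Gamma(\wedge^{n-k,Q-w}\gr(T^*M))$. Composing, $\star T \star$ maps $\Gamma(\wedge^{k,w}\gr(T^*M))$ back into itself, so $\star T \star$ respects the gradation. The identity $(\star T \star)^\Phi = \star T^\Phi \star$ then follows from extending \eqref{eq_*Phi} sectionwise, which gives $\Phi^{-1}\star = \star \Phi^{-1}$, so
\[
(\star T \star)^\Phi = \Phi^{-1}\star T \star\Phi = \star\,\Phi^{-1}T\Phi\,\star = \star T^\Phi \star.
\]
The consequences are immediate: applying Lemma~\ref{lem_Phi}(\ref{part 2, lemma 4.16}) to both $T$ and $\star T \star$, both of which respect the gradation, shows that $D:=T^\Phi$ and $\star D \star = (\star T \star)^\Phi$ respect the filtration, with $\gr(D)=T$ and $\gr(\star D \star)=\star T\star=\star \gr(D)\star$.

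For Part 2, three items need to be checked: (i) $T^t$ is a differential operator of the same order as $T$; (ii) $T^t$ respects the gradation; (iii) $(T^t)^\Phi = (T^\Phi)^t$. Item (i) is standard, via integration by parts in a local trivialisation. For (ii), I would exploit that at each point $x$, the decomposition $\wedge^\bullet \gr(T_x^*M)=\oplus^\perp_{k,w}\wedge^{k,w}\gr(T_x^*M)$ is orthogonal — this follows from the orthogonality of $TM=\oplus^\perp G^j$ recalled in Subsection~\ref{subsubsec_natorthgrad}, combined with the scalar product construction of Subsection~\ref{subsubsec_scpdt}. Thus for $\alpha\in\Gamma_c(\wedge^{k,w}\gr(T^*M))$ and $\beta\in\Gamma_c(\wedge^{k,w'}\gr(T^*M))$ with $w\ne w'$, $\langle T\alpha,\beta\rangle_{L^2}=0$, whence $\langle\alpha,T^t\beta\rangle_{L^2}=0$. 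Writing the smooth section $T^t\beta$ as $\sum_{w''}\gamma_{w''}$ along its weight decomposition and testing against bump-function multiples of each $\gamma_{w''}$ forces $\gamma_{w''}=0$ for $w''\ne w'$, yielding (ii). Item (iii) comes from $\Phi$ being an $L^2$-isometry by \eqref{eq_L2scalarproductPhi}, which formally means $\Phi^t=\Phi^{-1}$, so
\[
(T^\Phi)^t=(\Phi^{-1}T\Phi)^t=\Phi^t\,T^t\,(\Phi^{-1})^t=\Phi^{-1}T^t\Phi=(T^t)^\Phi.
\]
The final clause then follows by applying Lemma~\ref{lem_Phi}(\ref{part 2, lemma 4.16}) to both the gradation-respecting maps $T$ and $T^t$: the operators $D=T^\Phi$ and $D^t=(T^t)^\Phi$ both respect the filtration, with $\gr(D)=T$ and $\gr(D^t)=T^t=(\gr(D))^t$.

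The main obstacle will be item (ii): passing from $L^2$-orthogonality to the fibrewise conclusion that $T^t\beta$ lives in $\Gamma(\wedge^{k,w'}\gr(T^*M))$ requires a careful bump-function argument and relies on the smoothness of $T^t\beta$ granted by item (i). Everything else is a clean chain of formal manipulations once the weight-shifting behaviour of $\star$ and the $L^2$-isometry of $\Phi$ are recognised.
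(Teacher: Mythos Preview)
Your proof is correct. For Part~(1) your argument is essentially identical to the paper's: both compute that $\star$ sends weight $w$ to weight $Q-w$ on the graded side via the formula $\star\langle\Theta\rangle^I=(-1)^{\sigma(I)}\langle\Theta\rangle^{\bar I}$, conclude that $\star T\star$ preserves weights, and invoke \eqref{eq_*Phi} for the commutation with $\Phi$.

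For Part~(2) your route differs from the paper's. The paper simply says that Part~(2) ``follows from the definition of $D^t$ and its link with $\star$ covered in Subsection~\ref{subsubsec_star}'', presumably meaning that once Part~(1) is in hand, the Hodge-star description of the $L^2$-pairing transports the gradation-preservation and the $\Phi$-intertwining to the transpose. You instead argue directly from the $L^2$-isometry \eqref{eq_L2scalarproductPhi} of $\Phi$ (giving $(T^t)^\Phi=(T^\Phi)^t$ in one line) and from the pointwise orthogonality of the weight decomposition (forcing $T^t$ to preserve weights via a bump-function test). Your approach is more explicit and self-contained; the paper's is terser but leans on Part~(1) and the $\star$-formalism already set up. Both reach the same conclusion, and your bump-function step correctly uses that $T^t$ is a differential operator, hence local, to pass from the $L^2$ statement on compactly supported sections to the fibrewise conclusion.
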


\begin{proof}
The equality $ (\star T \star)^\Phi = \star T^\Phi \star$ follows from \eqref{eq_*Phi}.

Let $\Theta$ be a local adapted frame. 
We observe that $\langle \Theta \rangle^I$ is of weight $|I|$
while $\star \langle \Theta \rangle^I = (-1)^{\sigma(I)} \langle \Theta \rangle^{\bar I}$ is of weight $|\bar I| = Q-|I|$ where  $Q:= \upsilon_1+\ldots + \upsilon_n$ is the weight of the volume form.

Since $T$ respects the gradation and the weight of $\langle \Theta\rangle ^{\bar I}$ is $|\bar I|$, $T (\langle \Theta\rangle ^{\bar I})$ has the same weight $\vert \bar I\vert$.
Consequently, $T (\langle \Theta\rangle^{\bar I})$ is a $C^\infty(M)$-linear combination of basis elements $\langle \Theta\rangle^J$ where the multi-indices $J$ have weight $|J|=|\bar I| = Q-|I|$. Finally, $\star T (\Theta^{\bar I}) $ has weight $Q- |J|= |I|$.
We have therefore shown that the morphism $\star T\star$ maps elements of weight $|I|$ to elements of weight $|I|$, and so $\star T\star$ respects the gradation. 
Lemma \ref{lem_Phi} implies the rest of Part (1).

Part (2) follows from the definition of $D^t$ and its link with $\star$ covered in Subsection \ref{subsubsec_star}.
\end{proof}

\subsubsection{Properties of $\gr$ for algebraic maps}

Here, we describe some properties of $\gr$ in relation with algebraic maps (for the latter concept see Section \ref{subsubsec_def_alg}).

The following properties are easily checked.
\begin{lemma}
\label{lem_algebraic}
Let $M$ be a smooth manifold whose tangent bundle is filtered by vector subbundles as in \eqref{eq_filtredTM}. 
	\begin{enumerate}
		\item Any algebraic morphism of $C^\infty(M)$-module $D$ acting either on $\Omega^\bullet (M)$ or $\Gamma (\wedge^\bullet  \gr(T^*M))$
is determined by its pointwise restriction 
$D_x$ at each $x\in M$.
Conversely, any smooth map on $\wedge^\bullet  T^*M$ or $\wedge^\bullet  \gr(T^*M)$ that is linear at each fibre yields an algebraic morphism of $C^\infty(M)$-modules. \label{part 1, Lemma 4.19}

\item If a morphism of $C^\infty(M)$-modules $D$ acting on $\Omega^\bullet (M)$ 
is algebraic and respects the filtration, then $\gr( D)$ is a morphism of $C^\infty(M)$-module acting on $\Gamma (\wedge^\bullet  \gr(T^*M))$ which is also an algebraic map that respects the gradation.
Moreover, if $D$ is invertible, then
 $\gr( D)$ is also invertible, and $D^{-1}$ and $\gr(D)^{-1}$ are algebraic. \label{part 2, Lemma 4.19}
	\end{enumerate}
\end{lemma}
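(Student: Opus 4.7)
The plan is to verify each statement by reducing to a pointwise check and then transporting smoothness via a local adapted coframe.

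For Part \eqref{part 1, Lemma 4.19}, the forward direction is immediate from the very definition of an algebraic map in Section~\ref{subsubsec_def_alg}: an algebraic morphism $D$ is, by definition, given by its family of pointwise restrictions $D_x$. For the converse, given a smooth fibrewise-linear map $T:\wedge^\bullet T^*M\to\wedge^\bullet T^*M$ (or on $\wedge^\bullet \gr(T^*M)$), I would define $D:\Omega^\bullet(M)\to\Omega^\bullet(M)$ by the pointwise formula $D(\alpha)(x):=T_x(\alpha(x))$. Smoothness of $D(\alpha)$ follows from the smoothness of $T$ composed with the smooth section $\alpha$, and $C^\infty(M)$-linearity is tautological since at each point $T_x$ is linear, so that scalar functions factor through. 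Setting $(D)_x:=T_x$ then exhibits $D$ as algebraic.

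For Part \eqref{part 2, Lemma 4.19}, I would work in a neighborhood $U$ of any given point $x_0\in M$ with a local frame $\bX$ for $TM$ adapted to the filtration \eqref{eq_filtredTM} and its dual adapted coframe $\Theta$, together with the associated bases $(\Theta^I)_I$ of $\Omega^\bullet(M)$ and $(\langle\Theta\rangle^I)_I$ of $\Gamma(\wedge^\bullet \gr(T^*M))$. Since $D$ respects the filtration \eqref{filtration on k forms by weight}, each $D_x$ respects the filtration on $\wedge^\bullet T^*_x M$, and by the discussion of Section~\ref{subsubsec_Mrep} its matrix in the basis $(\Theta^I(x))_I$ is block-upper-triangular with smoothly varying entries. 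By definition, the matrix of $\gr(D)_x$ in the basis $(\langle\Theta\rangle^I(x))_I$ is the block-diagonal part of this matrix, hence it depends smoothly on $x$. Via Part \eqref{part 1, Lemma 4.19}, this smooth fibrewise-linear assignment yields an algebraic morphism of $C^\infty(M)$-modules on $\Gamma(\wedge^\bullet \gr(T^*M))$, which manifestly respects the gradation because each diagonal block preserves a homogeneous summand.

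For the invertibility assertion, the key observation is that a block-upper-triangular matrix is invertible if and only if each of its diagonal blocks is invertible. Since $D$ is algebraic, its invertibility as a module morphism is equivalent to the pointwise invertibility of every $D_x$; by the observation above this is in turn equivalent to the invertibility of every $\gr(D)_x$, so $\gr(D)$ is invertible. Finally, the map $x\mapsto (D_x)^{-1}$ is smooth because matrix inversion is a smooth operation on the open set of invertible matrices, and likewise for $x\mapsto \gr(D)_x^{-1}$; applying the converse in Part \eqref{part 1, Lemma 4.19} then shows that both $D^{-1}$ and $\gr(D)^{-1}$ are algebraic. I expect the main point to watch is not a single hard step but the bookkeeping with the adapted bases, specifically making sure the identification of the matrix of $\gr(D)$ as the block-diagonal part of the matrix of $D$, as recalled in Section~\ref{subsubsec_Mrep}, transfers faithfully to the exterior algebra level using the bases $(\Theta^I)_I$ and $(\langle\Theta\rangle^I)_I$ provided by Lemma~\ref{lem_Theta_corr}.
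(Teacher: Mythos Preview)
Your proposal is correct and is precisely the kind of routine verification the paper has in mind: the paper itself gives no proof beyond the sentence ``The following properties are easily checked.'' Your argument via local adapted (co)frames, the block-upper-triangular matrix description from Section~\ref{subsubsec_Mrep}, and smoothness of matrix inversion is exactly the natural way to spell this out.
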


The isomorphism $\Phi$  together with the concept of algebraic maps  allow us to construct inverses in some cases, and this provides a partial converse to \eqref{part 2, Lemma 4.19} in Lemma \ref{lem_algebraic} as follows.

\begin{proposition}
	\label{prop_invert}
	Let $M$ be a smooth manifold whose tangent bundle is filtered by vector subbundles as in \eqref{eq_filtredTM}. 
	Assume that $M$ is equipped with a metric $g_{TM}$. We consider the corresponding map $\Phi$ defined  in Section \ref{subsubsec_PhiM}.
	
Let $D:\Omega^\bullet (M) \to \Omega^\bullet (M)$ be a morphism of $C^\infty(M)$-modules that respects the filtration and such that  $\gr (D)$ is algebraic.
Assume also that $\gr( D)$ is fiberwise invertible in the sense that its restriction $\gr( D)_x$ at each $x\in M$ is a linear invertible map on $\wedge^\bullet T_x^* M$.
Then the map $D$  is invertible
and the map $D^{-1} :\Omega^\bullet (M) \to \Omega^\bullet (M)$ respects the filtration.
Moreover,  $\gr (D^{-1})=  \gr( D)^{-1}$,
and 
if $D$ is in addition a differential operator acting on $\Omega^\bullet (M)$, then so is its inverse. 
\end{proposition}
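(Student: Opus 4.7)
The plan is to construct $D^{-1}$ by splitting $D$ into a fibrewise-invertible ``leading part'' (obtained from $\gr(D)$ via the isomorphism $\Phi$) plus a weight-increasing perturbation, and then inverting using a terminating Neumann series based on the nilpotency statement \eqref{eq_N_0}.

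First, set $E := (\gr D)^\Phi = \Phi^{-1} \circ \gr(D) \circ \Phi$. Since $\gr(D)$ is a $C^\infty(M)$-module morphism respecting the gradation, Lemma~\ref{lem_Phi} tells us that $E$ is a $C^\infty(M)$-module morphism respecting the filtration with $\gr(E) = \gr(D)$. Moreover, because $\gr(D)$ is algebraic, so is $E$ (as $\Phi$ is algebraic by Lemma~\ref{lem_PhiM}). The fibrewise invertibility hypothesis on $\gr(D)$, combined with Lemma~\ref{lem_algebraic}\eqref{part 2, Lemma 4.19}, yields that $\gr(D)^{-1}$ exists as an algebraic morphism of $C^\infty(M)$-modules respecting the gradation; hence $E^{-1} := (\gr(D)^{-1})^\Phi$ is algebraic, respects the filtration, and truly inverts $E$ (since $(\gr(D)^{-1})\circ \gr(D) = \id$ transports to $E^{-1}\circ E = \id$ through $\Phi$).

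Next, write $R := D - E$. By Lemma~\ref{lem_Phi}\eqref{part 3, lemma 4.16}, $R$ strictly increases weights. Consequently $E^{-1} R$ also strictly increases weights (as $E^{-1}$ respects the filtration), and by \eqref{eq_N_0} there exists $N_0 \in \bN$ such that $(E^{-1} R)^{N_0} = 0$. Therefore the finite sum
\[
S := \sum_{k=0}^{N_0 - 1} (-1)^k (E^{-1} R)^k
\]
is a well-defined $C^\infty(M)$-module morphism satisfying $S(\id + E^{-1} R) = (\id + E^{-1} R) S = \id$. Writing $D = E(\id + E^{-1} R)$, we conclude that $D$ is invertible with
\[
D^{-1} = S \, E^{-1} = \sum_{k=0}^{N_0-1} (-1)^k (E^{-1} R)^k E^{-1}.
\]

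Since each factor $E^{-1} R$ and $E^{-1}$ respects the filtration, so does $D^{-1}$. For the graded part, $E^{-1} R$ strictly increases weights, so $\gr(\id + E^{-1} R) = \id$ and hence $\gr(S) = \id$; combined with $\gr(E^{-1}) = \gr(D)^{-1}$, this gives $\gr(D^{-1}) = \gr(D)^{-1}$. Finally, if $D$ is a differential operator, then $E$ (being algebraic) is a differential operator of order $0$, so $E^{-1}$ is a differential operator of order $0$, and $R = D - E$ is a differential operator. Each $(E^{-1} R)^k E^{-1}$ is then a differential operator, and so is the finite sum defining $D^{-1}$.

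The main obstacle — and the key leverage point — is recognising that the perturbation $R = D - (\gr D)^\Phi$ strictly increases weights, which converts an a priori infinite Neumann expansion into a terminating sum via \eqref{eq_N_0}; everything else then follows formally from the functorial properties of $\Phi$ and $\gr$ already established.
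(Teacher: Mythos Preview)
Your proof is correct and follows essentially the same approach as the paper: your $E$ is the paper's $T=(\gr D)^\Phi$, and your nilpotent perturbation $E^{-1}R = E^{-1}(D-E) = E^{-1}D-\id$ is exactly the paper's $B=T^{-1}D-\id$, so the terminating Neumann series you write down coincides with the one in the paper (and in Corollary~\ref{cor_prop_invert}). If anything, you spell out the conclusions about $\gr(D^{-1})=\gr(D)^{-1}$, preservation of the filtration, and the differential-operator case more explicitly than the paper does.
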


\begin{proof}
By \eqref{part 2, lemma 4.16} in Lemma \ref{lem_Phi}, the map $T:=  (\gr (D))^ \Phi\colon\Omega^\bullet(M)\to\Omega^\bullet(M)$ respects the filtration and $\gr( T)^{-1} =\gr( D)^{-1}$. Moreover, 
by \eqref{part 2, Lemma 4.19} of Lemma \ref{lem_algebraic}, 
$T$ is invertible and its inverse is 
$T^{-1} = ( \gr( D)^{-1})^\Phi$. 

By Lemma \ref{lem_gr_modulemorphism}, 
we have
$$
\gr (T^{-1}D) =\gr (T^{-1}) \gr (D) = \gr( D)^{-1} \gr (D) = \id\ , 
\mbox{ so }
\gr (T^{-1}D -\id ) =\gr (T^{-1}D) -\id =0\ .
$$
Hence $T^{-1}D-\id =: B$ is nilpotent with $B^{N_0}=0$ by \eqref{eq_N_0}.
Since $T^{-1}D=\id+B$ and $\id +B$ is invertible with $(\id+B)^{-1} = \sum_{j=0}^{N_0} (-1)^j B^j$, 
we have $(\id+B)^{-1} T^{-1} D =\id$. 
This provides a left inverse for $D$ and the formula below with $B=B_L$. We can do the same on the right. 
\end{proof}

\begin{corollary}
	\label{cor_prop_invert}
	We continue with the setting of Proposition \ref{prop_invert} and its proof.
The following formulae holds for $D^{-1}:\Omega^\bullet (M) \to \Omega^\bullet (M)$:
$$
D^{-1} = \sum_{j=0}^{N_0}(-1)^j B_L^j T^{-1} = T^{-1}\sum_{j=0}^{N_0}(-1)^j B_R^j 
$$
where $T =\gr( D)^ \Phi = \Phi \gr( D)^{-1} \Phi^{-1}$, 
and $B_L = T^{-1}D-\id$ while $B_R = DT^{-1}-\id$.
\end{corollary}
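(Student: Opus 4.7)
The plan is to extend the argument already begun in the proof of Proposition~\ref{prop_invert}, which essentially establishes the left-inverse formula and indicates that the right-inverse follows by a symmetric argument. The corollary merely records both explicit formulas.

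First I would restate what has been done: in the proof of Proposition~\ref{prop_invert} we set $T := (\gr(D))^\Phi$, noted via \eqref{part 2, lemma 4.16} of Lemma~\ref{lem_Phi} that $\gr(T) = \gr(D)$, and inferred via \eqref{part 2, Lemma 4.19} of Lemma~\ref{lem_algebraic} that $T$ is invertible with $\gr(T^{-1}) = \gr(D)^{-1}$. By Lemma~\ref{lem_gr_modulemorphism}, one then checks that
\[
\gr(T^{-1}D) = \gr(T^{-1}) \, \gr(D) = \gr(D)^{-1} \gr(D) = \id,
\]
so $B_L := T^{-1}D - \id$ satisfies $\gr(B_L) = 0$; by \eqref{eq_N_0} it is nilpotent with $B_L^{N_0} = 0$. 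From the usual finite geometric series identity for nilpotent operators,
\[
(\id + B_L)^{-1} = \sum_{j=0}^{N_0}(-1)^j B_L^j,
\]
and since $T^{-1}D = \id + B_L$, multiplying on the right by $T^{-1}$ yields the first formula
\[
D^{-1} = (\id+B_L)^{-1} T^{-1} = \sum_{j=0}^{N_0}(-1)^j B_L^j \, T^{-1}.
\]

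For the second formula I would run the mirror argument on the right: compute
\[
\gr(DT^{-1}) = \gr(D)\,\gr(T^{-1}) = \gr(D)\,\gr(D)^{-1} = \id,
\]
so $B_R := DT^{-1} - \id$ is likewise nilpotent with $B_R^{N_0} = 0$ (the same integer $N_0$ works, since it depends only on $M$ and its filtration). Then $DT^{-1} = \id + B_R$ is invertible with inverse $\sum_{j=0}^{N_0}(-1)^j B_R^j$, and multiplying on the left by $T^{-1}$ gives
\[
D^{-1} = T^{-1}(\id+B_R)^{-1} = T^{-1} \sum_{j=0}^{N_0}(-1)^j B_R^j.
\]

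There is no real obstacle here: the only subtlety is observing that the \emph{same} exponent $N_0$ from \eqref{eq_N_0} kills both $B_L$ and $B_R$, which is immediate because $N_0$ is determined by the filtration of $M$ and not by the particular weight-increasing operator. Everything else is formal manipulation of the Neumann series for nilpotent perturbations of the identity, combined with the identifications already packaged in Lemmas~\ref{lem_Phi}, \ref{lem_gr_modulemorphism}, and~\ref{lem_algebraic}.
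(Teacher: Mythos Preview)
Your proposal is correct and follows exactly the approach indicated in the paper: the proof of Proposition~\ref{prop_invert} already carries out the left-inverse computation via $B_L = T^{-1}D - \id$ and the Neumann series, and explicitly says ``We can do the same on the right,'' which is precisely the symmetric argument you wrote out for $B_R$. The corollary in the paper is stated without its own proof, so your write-up simply makes explicit what the paper leaves to the reader.
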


\section{Filtered manifolds and osculating objects}
\label{sec_filtM+osculating}

In this section, we discuss the setting of filtered manifolds and some complexes naturally associated with them. 
In particular, we will define the osculating differential $d_{\fg M}$ and study its relation to the de Rham complex $(\Omega^\bullet(M),d)$.
We will also use the algebraic part $\dalg$ of $d$.
The main result of this section is in Proposition \ref{prop_grd} stating that $\gr (d)=\gr (\dalg)=d_{\fg M}$.

\subsection{Setting and definitions}
\label{subsec_def+setting}

A \emph{filtered manifold} is a smooth manifold $M$ equipped with a filtration of the tangent bundle $TM$ by vector subbundles 
$$
M\times \{0\} = H^0 \subseteq H^1 \subseteq \ldots \subseteq H^s =TM
$$
 satisfying
 \begin{equation}
 	\label{eq_GammaHiji+j}
 	[\Gamma(H^i),\Gamma(H^j) ]\subseteq \Gamma(H^{i+j}) \,,
 \end{equation}
with the convention that $H^{i} =TM$ when $i>s$.

\begin{ex}
    Any contact manifold, or more generally any regular subRiemannian manifold, is a filtered manifold, see Section \ref{subsubsec_gfgM}.
    A related class of examples is given by nilpotent Lie groups equipped with a left-invariant filtration \cite{LD+T22}.
\end{ex}

\subsubsection{Bundle of osculating Lie groups and algebras}

For each $x\in M$, the quotient
$\gr(T_x M) = \oplus_{i=1}^{s_0} \left(H^i_x / H^{i-1}_x\right)$
is naturally equipped with a Lie bracket $[\cdot,\cdot ]_{\fg_x M}$, since
$$
\forall\, f,g\in C^\infty(M), \ X\in \Gamma(H^i), \ Y\in \Gamma (H^j)\qquad
[fX,gY] = 
fg [X,Y] + \Gamma (H^{i+j-1}).
$$
When $\gr(T_x M)$ is equipped with this Lie bracket, 
we denote the resulting Lie algebra as
$$
\fg_x M := (\gr(T_x M), [\cdot,\cdot ]_{\fg_x M})\,.
$$
It is naturally graded by
$$
\fg_x M = \oplus_{i=1}^{s_0} \left(H^i_x / H^{i-1}_x\right),
$$
and is therefore nilpotent. 
We denote by $G_x M$ the corresponding connected simply connected nilpotent  Lie group (sometimes called the nilpotentisation or the tangent cone \cite{Mitchell} of $M$ at $x$). 
The unions
$$
G M :=\cup_{x\in M} G_x M
\qquad \mbox{and}  \qquad
\fg M :=\cup_{x\in M}\fg_x M,
$$
are naturally equipped with a smooth bundle structure that are called
\cite{RotschildStein,vErpYuncken}
the {\it bundles of osculating Lie groups and Lie algebras} over $M$.

We observe that the notions of weights defined in Section \ref{subsubsec_wedgekgrTM*}
and for the graded Lie algebra $\fg_x M$  coincide. 
We therefore obtain an analogous decomposition of $ \wedge^\bullet \fg_x^* M$ in terms of weights and degrees:
 \begin{equation}
 \label{eq_decwedgefgxM*}
 \wedge^\bullet \fg_x^* M = \oplus_{k,w\in \bN_0} \wedge^{k,w} \fg_x^* M \ , 
 \quad\forall\, x\in M\ .
 \end{equation}
 
We introduce the following vocabulary.
\begin{definition}
The elements of $G\Omega^\bullet (M) :=\Gamma (\wedge^\bullet\mathfrak{g}^\ast M)$ are  called \emph{osculating forms}. Moreover, for any $k,w\in \bN_0$, 
$$
G\Omega^k (M) :=\Gamma (\wedge^k \fg^* M)
\qquad\mbox{and}\qquad 
G\Omega^{k,w} (M) :=\Gamma (\wedge^{k,w} \fg^* M)
$$
are  called \emph{osculating $k$-forms} (or osculating forms of degree $k$) and 
\emph{osculating $k$-forms of weights $w$} respectively. 
\end{definition}

We say that
a map $T\colon G\Omega^\bullet (M)\to G\Omega^\bullet (M)$ \textit{respects the weights} of osculating forms when $T (G\Omega^{\bullet ,w}(M) )\subseteq G\Omega^{\bullet ,w}(M) $, and it \textit{increases the weights} when $T (G\Omega^{\bullet ,w} (M))\subseteq \oplus_{w'\geq w} G\Omega^{\bullet ,w'} (M)$.
% We will use a similar vocabulary for respecting or increasing degrees. 

\subsubsection{The osculating Chevalley-Eilenberg differential}
\label{subsec_dfgM}
For each $x\in M$,  
$d_{\fg_x M}$ denotes the Chevalley-Eilenberg differential on the  Lie group $G_x M$
viewed as the map 
$$
d_{\fg_x M} : \wedge^\bullet \fg_x^* M \to \wedge^\bullet \fg_x^* M
$$
defined via $d_{\fg_x M} (\wedge^0 \fg_x^* M)=\{0\}$ for $k=0$, for $k=1$
\begin{equation}
\label{eqdef_dgMk=1}
\forall\, \alpha\in \wedge^1 \fg_x^* M\, , \ V_0,V_1\in \fg_x M\ ,\quad
d_{\fg_x M}\alpha(V_0,V_1) = -\alpha ([V_0,V_1]_{\fg_x M}),
\end{equation}
 and more generally for $k>0$, for any $\alpha \in \wedge^k \fg_x^* M $ and $V_0,\ldots V_k\in \fg_x M$,
\begin{equation}
\label{eqdef_dgM}
	d_{\fg_x M}  \alpha (V_0,\ldots,V_k) = 
\sum_{0\leq i<j\leq k}
(-1)^{i+j} \alpha([V_i,V_j]_{\fg_x M},V_0,\ldots,\hat V_i,\ldots,\hat V_j,\ldots, V_k).
\end{equation} 
It is well known \cite{F+T1} that $d_{\fg_x M}$ is a linear map such that 
$$
d_{\fg_x M}^2 =0 \ , \quad d_{\fg_x M} (\wedge^{k} \fg_x^* M)\subseteq \wedge^{k+1} \fg_x^* M,
$$
and that it satisfies 
the Leibniz property on $\wedge^\bullet \fg_x^* M $, i.e.
$$
\forall\, \alpha\in \wedge^k \fg_x^* M , \ \beta\in \wedge^\bullet \fg_x^* M \ ,\quad
d_{\fg_x M} (\alpha\wedge \beta) = (d_{\fg_x M} \alpha)\wedge \beta + (-1)^k \alpha \wedge (d_{\fg_x M} \beta).
$$ 
Moreover,  $d_{\fg_x M}$ also preserves the weights of $\wedge^\bullet \fg_x^* M $:
$$
d_{\fg_x M} (\wedge^{k,w} \fg_x^* M )\subseteq \wedge^{k+1,w} \fg_x^* M.
$$

By construction, the algebraic map $d_{\fg M}$ defined by 
$$
d_{\fg M} : G\Omega^\bullet (M)\to G\Omega^\bullet (M) 
\qquad (d_{\fg M})_x = d_{\fg_x M}, \ \forall\,x\in M,
$$
is smooth. 
We call $(G\Omega^\bullet(M),d_{\fg M})$ the \emph{osculating complex} or \emph{osculating Chevalley-Eilenberg differential}. 

\subsubsection{The osculating box operator}
\label{subsec_oscBox}
Here we assume that 
the vector bundle $\wedge^\bullet \fg^* M$ 
is equipped with a metric  $g_{\wedge^\bullet \fg^* M}$:
each $\wedge^\bullet \fg_x^* M$ is equipped 
with a scalar product $g_{\wedge^\bullet \fg_x ^*M}$ with a smooth dependence in $x\in M$.
This scalar product allows us to consider, at each point $x\in M$, the transpose of the osculating differential $d_{\fg M}$, and then to define 
$$
\Box_{\fg M}:=
d_{\fg M}d_{\fg M}^t+d_{\fg M}^td_{\fg M} \,.
$$
We call this operator
 the \emph{osculating box}. 
 As $d_{\fg_M}$ is algebraic, so are $d_{\fg M}^t$ and $\Box_{\fg M}$.
 
 Let us now assume that the decomposition \eqref{eq_decwedgefgxM*}
  is orthogonal for the scalar product $g_{\wedge^\bullet \fg_x^* M}$ for each $x\in M$.
 This arises naturally when the manifold is filtered and Riemannian (see Section \ref {subsec_filtration+metric}).
With this assumption, we readily check that $d_{\fg M}^t$ preserves the weights, i.e. $d_{\fg_x M}^t ( \wedge^{k,w} \fg_x^* M)\subseteq  \wedge^{k-1,w} \fg_x^* M$,
 and
 $\Box_{\fg M}$ respects the degree and weights of osculating forms:
 $$
\Box_{\fg_x M} (\wedge^{k,w} \fg_x^* M )\subseteq \wedge^{k,w} \fg_x^* M.
$$
Moreover, given the scalar product $g_{\wedge^\bullet \fg_x^* M}$, the linear map
$$
\Box_{\fg_x M}:=
(\Box_{\fg M})_x   = d_{\fg_x M}d_{\fg_x M}^t+d_{\fg_x M}^td_{\fg_x M}\colon \wedge^\bullet \fg_x^* M\to\wedge^\bullet \fg_x^* M
$$
is  symmetric.
We denote by $\Pi_{\fg_x M}$ the  spectral orthogonal projection onto its kernel 
$$
E_{\fg_x M}:=\ker( \Box_{\fg_x M}) = \ker d_{\fg_x M}\cap \ker d_{\fg_x M}^t = \IM \Pi_{\fg_x M}.
$$
Note that the image of $\Box_{\fg_x M}$ is 
$$
F_{\fg_x M}:=\IM( \Box_{\fg_x M}) = \IM d_{\fg_x M}+  \IM d_{\fg_x M}^t = \ker \Pi_{\fg_x M} = E_{\fg_x M} ^\perp.
$$
By \cite[Lemma 2.1]{F+T1}, 
the complex $(F_{\fg M}^\bullet, d_{\fg M})$ is acyclic, i.e.
$d_{\fg M} (F_{\fg M})\subseteq F_{\fg M}$,
and the resulting cohomology is trivial:
$$
	d_{\fg_x M} (F_{\fg_x M}) = \ker (d_{\fg_x M} : F_{\fg_x M} \to F_{\fg_x M}) \subseteq \wedge^\bullet \fg_x^* M.
$$

For $|z|=\eps$ small enough, 
$z-\Box_{\fg_x M}$ is invertible, and by the Cauchy residue formula we have
$$
\Pi_{\fg_x M} = \frac1{2\pi i}\oint_{|z|=\eps} (z-\Box_{\fg_x M})^{-1} dz ,
$$
where the contour integration is over a circle about 0 of radius $\eps>0$ small enough.
This defines a smooth algebraic map $\Pi_{\fg M}$ acting on $G\Omega^\bullet (M)$ that respects degrees and weights.

\subsubsection{The partial inverse $d^{-1}_{\fg M}$}
\label{subsubsec_partialinv}

Proceeding as in \cite{F+T1}, 
at each $x\in M$,
a partial inverse $d_{\fg_x M}^{-1}$ of $d_{\fg_x M}$ can be defined as
$$
d_{\fg_x M}^{-1} := (d_{\fg_x M})^{-1} \pr_{\IM d_{\fg_x M}}\,,
$$ 
where we write $\pr_S$ to denote the orthogonal projection onto any $S$ closed subspace of $\wedge^\bullet \fg_x^* M$.

Note that in our construction in Section \ref{sec_generalscheme}, we will not use $d_{\fg_x M}^{-1}$.
However, it is used in Rumin's construction, see Section \ref{subsec_eqDtildeD}.

 where we show that the two constructions coincide. 

The resulting map $d_{\fg M}^{-1}$ on $G\Omega (M)$ is a smooth map that respects the filtration, and such that
$$
(d_{\fg M}^{-1})^2 =0\ \text{ and }\,
d_{\fg_x M}^{-1} (\wedge^{k+1,w} \fg_x^* M) \subseteq \wedge^{k,w}\fg_x ^*M\ ,\ \forall\, k,w\in \bN_0\,.
$$
By \cite[Section 2]{F+T1},  the kernel and image of $d_{\fg M}^{-1}$ coincide with the one of $d_{\fg M}^{t}$:
$$
\ker d_{\fg_x M}^t = \ker d_{\fg_x M}^{-1}
\quad\mbox{and}\quad 
\IM d_{\fg_x M}^t = \IM d_{\fg_x M}^{-1}, 
$$
so 
$$
E_{\fg_x M} = \ker d_{\fg_x M}\cap \ker d_{\fg_x M}^{-1}
\quad\mbox{and}\quad 
F_{\fg_x M}= \IM d_{\fg_x M}+  \IM d_{\fg_x M}^{-1}.
$$
By \cite[Proposition 2.2]{F+T1}, 
$$
\Pi_{\fg M} = \id -d_{\fg M}^{-1} d_{\fg M} - d_{\fg M} d_{\fg M}^{-1}
$$
 and we have
$$ 	d_{\fg M}^{-1}\Pi_{\fg M}  = \Pi_{\fg M} d_{\fg M}^{-1}
 	 =0\ ,\quad
 	 d_{\fg M}\Pi_{\fg M} = \Pi_{\fg M}d_{\fg M}=0\,.
$$

\subsection{The maps $d$ and $\dalg$ on a  filtered manifold}
Let us show that $\gr ( d)$ and $\gr ( \dalg)$ are well-defined, crucial operators, and that both $(\Gamma(\wedge^\bullet\gr(T^\ast M)),\gr(d))$ and $(\Gamma(\wedge^\bullet\gr(T^\ast M)),\gr(\tilde{d}))$ coincide with the osculating complex.

\begin{proposition}
\label{prop_grd}
 The maps $d$ and $\dalg$ respect the filtration with $$
 (d-\dalg)\Omega^{k,\geq w}(M) \subseteq \Omega^{k,\geq w+1}(M),
 $$
 for any degree $k\in \bN_0$ and weight $w\in \bN_0$.
 Moreover, 
 the maps 
 $\gr (d)$ and $\gr (\dalg)$ acting on $G\Omega^\bullet (M)\cong\Gamma(\wedge^\bullet\gr(T^\ast M))$ 
 coincide with $ d_{\fg M}$:
 $$
 \gr ( d) = \gr (\dalg) = d_{\fg M}
\,.
$$
	 \end{proposition}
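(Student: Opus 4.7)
The approach is to exploit the explicit formulas \eqref{eqdef_d0} and \eqref{eq_d-d0} for $\tilde d$ and $d-\tilde d$ together with the filtered condition \eqref{eq_GammaHiji+j}, then reduce the identification $\gr(\tilde d) = d_{\fg M}$ to the case of 1-forms via a Leibniz argument. For the filtration claims, take $\omega \in \Omega^{k,\geq w}(M)$ and vector fields $V_\ell \in \Gamma(H^{j_\ell})$. If $j_0+\cdots+j_k < w$, then each summand of $\tilde d\omega(V_0,\ldots,V_k)$ given by \eqref{eqdef_d0} vanishes: by \eqref{eq_GammaHiji+j}, $[V_i,V_j] \in \Gamma(H^{j_i+j_j})$, so the total weight of $\omega$'s arguments remains $j_i + j_j + \sum_{\ell \neq i,j} j_\ell = j_0+\cdots+j_k < w$, and the weight hypothesis on $\omega$ kills that evaluation. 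Hence $\tilde d$ respects the filtration. Similarly, if $j_0+\cdots+j_k \leq w$, the $i$-th summand of $(d-\tilde d)\omega(V_0,\ldots,V_k)$ given by \eqref{eq_d-d0} vanishes: either $j_i \geq 1$, so the remaining indices sum to $\leq w-1 < w$ and $\omega$ vanishes; or $j_i=0$, so $V_i \in \Gamma(H^0)=\{0\}$. Thus $(d-\tilde d)$ strictly increases weights, which in turn implies $d$ respects the filtration and $\gr(d) = \gr(\tilde d)$.

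It remains to identify $\gr(\tilde d)$ with $d_{\fg M}$ on $G\Omega^\bullet(M) \cong \Gamma(\wedge^\bullet\gr(T^\ast M))$. Both maps vanish on $G\Omega^0(M)$. For 1-forms, fix $\alpha \in \Omega^{1,\geq w}(M)$ so that $\langle\alpha\rangle \in \Gamma((H^{w-1})^\perp/(H^w)^\perp)$, and let $X_0\in \Gamma(H^{i_0})$, $X_1 \in \Gamma(H^{i_1})$ with $i_0+i_1=w$. The identity $\tilde d\alpha(X_0,X_1) = -\alpha([X_0,X_1])$ from Lemma \ref{lem_d0}, combined with the definition $[\langle X_0\rangle_x,\langle X_1\rangle_x]_{\fg_x M} = \langle[X_0,X_1]\rangle_x$ in $H^w_x/H^{w-1}_x$ and the fact that $\alpha_x \in (H^{w-1}_x)^\perp$ descends to $\langle\alpha\rangle_x$ on this quotient, gives at each $x$
$$
\bigl(\tilde d\alpha(X_0,X_1)\bigr)(x) = -\langle\alpha\rangle_x\bigl([\langle X_0\rangle_x,\langle X_1\rangle_x]_{\fg_x M}\bigr) = \bigl(d_{\fg_x M}\langle\alpha\rangle_x\bigr)\bigl(\langle X_0\rangle_x,\langle X_1\rangle_x\bigr)
$$
by \eqref{eqdef_dgMk=1}. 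This is exactly the weight-$w$ component of $\tilde d\alpha$, so $\gr(\tilde d)\langle\alpha\rangle = d_{\fg M}\langle\alpha\rangle$ on $G\Omega^1(M)$.

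To extend to arbitrary degree, note that $\tilde d$ is a $C^\infty(M)$-linear derivation (Lemma \ref{lem_d0}) and the filtration \eqref{filtration on k forms by weight} is multiplicative under wedge, so $\gr(\tilde d)$ is a $C^\infty(M)$-linear derivation on $G\Omega^\bullet(M)$; the map $d_{\fg M}$ is likewise an algebraic derivation by construction. Since these two derivations agree in degrees $0$ and $1$, and $\Gamma(\wedge^\bullet\gr(T^\ast M))$ is locally generated over $C^\infty(M)$ by elements of degree $1$, they coincide on all of $G\Omega^\bullet(M)$. The main subtlety I anticipate is the 1-form identification: one must carefully track how the passage $\alpha \mapsto \langle\alpha\rangle$ for weight-$w$ covectors interacts with the osculating bracket on $\fg_x M$, using that the filtered condition \eqref{eq_GammaHiji+j} is exactly what makes $[\cdot,\cdot]_{\fg_x M}$ well-defined on the quotient $H^w_x/H^{w-1}_x$, so that the pointwise match with $\tilde d$ is exact.
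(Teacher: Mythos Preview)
Your proof is correct and follows essentially the same route as the paper's: verify the filtration claims directly from the explicit formulae \eqref{eqdef_d0} and \eqref{eq_d-d0} together with the bracket condition \eqref{eq_GammaHiji+j}, deduce $\gr(d)=\gr(\tilde d)$, then identify $\gr(\tilde d)$ with $d_{\fg M}$ on $1$-forms and extend by the Leibniz rule. The only cosmetic difference is that the paper checks that $d$ and $\tilde d$ respect the filtration on $\Omega^1$ and propagates via Leibniz, whereas you argue directly in all degrees (and handle the $j_i=0$ edge case explicitly); both are fine.
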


\begin{proof}
From the definitions of $d$ and $\tilde d$, both differentials as maps $\Omega^1(M) \to \Omega^2(M)$ respect the corresponding filtrations. 
As they obey the Leibniz rule, both $d$ and $\tilde d$  respect the filtration on $\Omega^\bullet(M)$.
	Moreover, $\gr (d)$ and $\gr ( \dalg)$ satisfy the Leibniz property on $G\Omega^\bullet (M)\cong\Gamma(\wedge^\bullet\gr(T^\ast M))$.
	
	Let $\omega\in \Omega^{k,\geq w}(M)$ and for each $j=0,\ldots,k$ take
$V_j\in H^{w_{i(j)}}$ with $w_{i(0)} + \ldots +w_{i(k)}\leq w$.
Since $w_{i(0)} + \ldots +\hat {w}_{i(l)}+ \ldots +w_{i(k)}< w$, we have that 
$\omega(V_0,\ldots,\hat V_l,\ldots, V_k)=0$, and so 
$(d-\dalg)\omega(V_0,\ldots,V_k)
=0$ by \eqref{eq_d-d0}. 

This shows $(d-\dalg)\Omega^{k,\geq w} (M)\subseteq \Omega^{k,\geq w+1}(M)$, and implies $\gr( d)=\gr( \dalg)$. 
We are left to prove that $\gr(\dalg) = d_{\fg M}$.

If $\omega\in  \Omega^1(M)$ and $V_{i_1}\in \Gamma(H^{w_{i(1)}}),V_{i_2}\in \Gamma(H^{w_{i(2)}})$, 
then by Lemma \ref{lem_d0} and \eqref{eq_GammaHiji+j}.
\begin{align*}
\dalg \omega (V_1 \, \mod \, H^{w_{i(1)}-1}, V_2 \,  \mod \, H^{w_{i(2)}-1})
&= -\omega ([V_1 \, \mod \, H^{w_{i(1)}-1}, V_2\, \mod \, H^{w_{i(2)}-1}])
\\&=-\omega ([V_1,V_2]\, \mod  \, H^{w_{i(1)}+w_{i(2)}-1})\,.	
\end{align*}
Hence, we have that for any $\omega\in (H^{w_j})^\perp$, $j=0,1,\ldots,s_0-1$
$$
\gr(\dalg) (\omega \, \mod\, (H^{w_{j+1}})^\perp) = -(\omega  \, \mod\, (H^{w_{j+1}})^\perp) ([\cdot,\cdot]_{\fg M}).
$$
We recognise $d_{\fg M} (\omega \, \mod\, (H^{w_{j+1}})^\perp)$ from \eqref{eqdef_dgMk=1}.
Hence  the maps $d_{\fg_M}$ and $\gr ( \dalg) $ coincide 
on $G\Omega^1(M)$.
Since they both vanish on 
$G\Omega^0(M)$ and satisfy the Leibniz property, 
they coincide in fact on the whole $G\Omega^\bullet(M)$. 
\end{proof}

\section{Construction of subcomplexes on  Riemannian filtered manifolds}
\label{sec_generalscheme}

Here, we present the general scheme to construct  subcomplexes on a filtered manifold $M$ whose tangent bundle is equipped with a metric $g_{TM}$.
It relies on the notions of a base differential and codifferential that we introduce in Section \ref{subsec_base} (this terminology is borrowed from \cite{grong2023filtered}).
In Section \ref{subsec_BoxPL}, we construct certain operators $\Box,P,L$  which allow us to define the subcomplexes $(F_0^\bullet,C)$ and $(E_0^\bullet,D)$, with the latter computing the same cohomology as de Rham's (see Section 
\ref{subsec_DC}).
In Section \ref{subsec_tildeDC}, we also present an analogous construction, obtaining the operators $\tilde \Box,\tilde P,\tilde L$ and then the subcomplexes $(\tilde F_0^\bullet,\tilde C)$ and $(\tilde E_0^\bullet,\tilde D)$, with the latter computing the same cohomology as the complex $(\Omega^\bullet(M),\tilde d)$.
We present an alternative construction for $(E_0^\bullet,D)$ and
$(\tilde E_0^\bullet,\tilde D)$
in Section \ref{subsec_eqDtildeD} that follows Rumin's ideas. 

\subsection{Base differential and codifferential $d_0$ and $\delta_0$}
\label{subsec_base}

\begin{definition}
\label{def_basepair}
   Let $d_0$ and $\delta_0$ be two differential algebraic complexes on a filtered manifold $M$ equipped with a metric $g_{TM}$.
    We say that $(d_0,\delta_0)$ is a \emph{pair of base differential and codifferential}  when the following properties  are satisfied:
    \begin{itemize}
        \item $d_0$ increases the degree by one while $\delta_0$ decreases the degree by one, and both  respect the filtration of $\Omega^\bullet (M)$:
        $$
        d_0(\Omega^{k,\geq w}(M)) \subset \Omega^{k+1,\geq w}
        (M)\quad\mbox{and}\quad 
        \delta_0(\Omega^{k+1,\geq w}(M) )\subset \Omega^{k,\geq w}(M),
        $$
        \item $d_0$ and $\delta_0$ are disjoint \cite{Kostant}, that is, for any $\alpha\in \Omega^\bullet(M)$
        \begin{align*}
            \delta_0d_0\alpha=0\Longrightarrow d_0\alpha=0\ \text{ and }\ d_0\delta_0\alpha=0\Longrightarrow \delta_0\alpha=0\,,
        \end{align*}
        \item $\gr (d_0) = d_{\fg M}$ and $\gr (\delta_0) = d_{\fg M}^t$.
    \end{itemize}
\end{definition}
Above, the transpose is defined using a metric $g_{\wedge^\bullet \fg^* M}$ on the osculating bundle $\wedge^\bullet \fg ^*M$ induced by $g_{TM}$.

Given a pair $(d_0,\delta_0)$ of base differential and codifferential on a filtered manifold $M$, we define the associated \emph{base box} via
$$
  \Box_0 := d_0 \, \delta_0 +\delta_0\, d_0 .
$$

For any $x\in M$, we define the operator 
$$
\Pi_{0,x} := \frac1{2\pi i}\oint_{|z|=\epsilon} (z-\Box_{0,x})^{-1} dz ,
$$
for $\eps>0$ small enough.

\begin{proposition}
   \begin{enumerate}
       \item The operators $\Box_0$ and $\Pi_0$ are smooth and algebraic on $\Omega^\bullet (M)$.
       They respect its filtration and keep the degrees of the forms constant:
       $$
        \Box_0(\Omega^{k,\geq w}(M)) \subset \Omega^{k,\geq w}
        (M)\quad\mbox{and}\quad 
        \Pi_0(\Omega^{k,\geq w}(M) )\subset \Omega^{k,\geq w}(M).
        $$
        They also  satisfy 
          $$
    \gr(\Box_0) =\Box_{\fg M}
    \qquad\mbox{and}\qquad 
    \gr (\Pi_0) =\Pi_{\fg M}.
    $$
    The algebraic subbundles 
$$
E_0:=\ker d_0\cap \ker \delta_0
\quad\mbox{and}\quad 
F_0:= \IM d_0+  \IM \delta_0,
$$
 admit subfiltrations and we have:  
    $$
    \gr (E_0) =E_{\fg M}
     \qquad\mbox{and}\qquad 
    \gr (F_0)=F_{\fg M}.
    $$
       \item  
               We have
        $$
        \Omega^\bullet (M) = E_0^\bullet \oplus F_0^\bullet
        \qquad\mbox{with}
\qquad     E_0 = \ker \Box_0 
        \qquad\mbox{and}\qquad 
    F_0 =\IM \Box_0.
    $$
Moreover, $\Pi_0$ is the projection onto $E_0$ along $F_0$.
   \end{enumerate} 
\end{proposition}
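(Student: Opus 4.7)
The plan is to derive both parts from the filtration-respecting property of $d_0,\delta_0$ together with the disjointness condition and the identities $\gr(d_0)=d_{\fg M}$, $\gr(\delta_0)=d_{\fg M}^t$. The elementary properties of $\Box_0$ follow at once: as a sum of compositions of algebraic smooth maps it is algebraic and smooth, it preserves degrees because $d_0$ raises and $\delta_0$ lowers the degree by one, and it respects the filtration because its constituents do. Functoriality of $\gr$ for filtration-respecting morphisms (Lemma \ref{lem_gr_modulemorphism}) then gives
\begin{equation*}
\gr(\Box_0)=\gr(d_0)\gr(\delta_0)+\gr(\delta_0)\gr(d_0)=d_{\fg M}d_{\fg M}^t+d_{\fg M}^t d_{\fg M}=\Box_{\fg M}.
\end{equation*}

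Next I would analyse $\Pi_0$ via its pointwise Cauchy integral. In a local frame of $TM$ adapted to the filtration, $\Box_{0,x}$ is represented by a block-upper-triangular matrix whose diagonal blocks represent $\Box_{\fg_x M}$ in the associated graded basis (Subsection \ref{subsubsec_Mrep}), so the spectra of $\Box_{0,x}$ and $\Box_{\fg_x M}$ coincide. The latter being symmetric makes both spectra real with $0$ isolated, and continuity of eigenvalues allows $\eps$ to be chosen locally uniformly in $x$. The resolvent $(z-\Box_{0,x})^{-1}$ is again block-upper-triangular with diagonal blocks $(z-\Box_{\fg_x M})^{-1}$, so the Cauchy integral yields a smooth algebraic map $\Pi_0$ that preserves degrees and filtration and satisfies $\gr(\Pi_0)=\Pi_{\fg M}$.

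The crux is then to identify $\ker \Box_0$ with $E_0$ and to show that $0$ is a semisimple eigenvalue of $\Box_{0,x}$, so that $\Pi_0$ projects onto $\ker \Box_0$ and not merely onto its generalised kernel. If $\Box_0\alpha=0$, then $d_0\Box_0\alpha=d_0\delta_0 d_0\alpha=0$ (using $d_0^2=0$), and disjointness yields first $\delta_0 d_0\alpha=0$ and then $d_0\alpha=0$; symmetrically $\delta_0\alpha=0$. The reverse inclusion is immediate, so $\ker\Box_0=E_0$. Iterating the same argument on $\Box_0^2\alpha=0$ gives $\ker\Box_0^2=\ker\Box_0$, which is precisely the semisimplicity of the zero eigenvalue. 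This establishes the decomposition $\Omega^\bullet(M)=\ker\Box_0\oplus\IM\Box_0$ with $\Pi_0$ the projection onto $\ker\Box_0$ along $\IM\Box_0$.

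Finally I would identify $\IM\Box_0$ with $F_0$: the inclusion $\IM\Box_0\subseteq F_0$ is clear, while for the converse $\Box_0$ commutes with both $d_0$ and $\delta_0$ (from $d_0^2=\delta_0^2=0$), so given $d_0\beta\in\IM d_0$, decomposing $\beta=\beta_E+\Box_0\gamma$ using $\Omega=\ker\Box_0\oplus\IM\Box_0$ gives $d_0\beta=d_0\Box_0\gamma=\Box_0 d_0\gamma\in\IM\Box_0$, and similarly for $\IM\delta_0$. The subbundle structure and subfiltrations of $E_0$ and $F_0$ follow from the idempotent identities $E_0=\IM\Pi_0$, $F_0=\IM(\id-\Pi_0)$ together with the fact that the fibrewise rank of $\Pi_0$ equals that of $\Pi_{\fg M}$; applying $\gr$ then yields $\gr(E_0)=E_{\fg M}$ and $\gr(F_0)=F_{\fg M}$. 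The main obstacle I anticipate is the semisimplicity step, which is precisely where the disjointness hypothesis in Definition \ref{def_basepair} plays its essential role; without it, $\Pi_0$ would project onto the generalised kernel and the identification with $E_0$ would break down.
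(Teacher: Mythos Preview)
Your argument is correct. For Part (1) you give essentially the same reasoning as the paper (which dispatches it with ``by construction''), but for Part (2) you take a genuinely different route. The paper first shows $E_0\cap F_0=\{0\}$ from disjointness, then invokes the dimension identity $\dim E_0+\dim F_0=\dim E_{\fg M}+\dim F_{\fg M}$ coming from Part (1) to conclude $\Omega^\bullet(M)=E_0\oplus F_0$; the equalities $E_0=\ker\Box_0$ and $F_0=\IM\Box_0$ then drop out by rank--nullity, and $\Pi_0$ is identified via Cauchy residues as identity on $\ker\Box_0$ and zero on $\IM\Box_0$. You instead argue internally: disjointness gives $\ker\Box_0=E_0$ and, by iteration, the semisimplicity $\ker\Box_0^2=\ker\Box_0$, whence the Cauchy projector $\Pi_0$ lands exactly on $\ker\Box_0$ rather than a larger generalised kernel; you then recover $\IM\Box_0=F_0$ from the commutation of $\Box_0$ with $d_0,\delta_0$. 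Your route is more self-contained in that it never appeals to the graded dimension count, and it makes explicit why disjointness is the hypothesis that rules out nilpotent Jordan blocks at $0$; the paper's route is shorter once Part (1) is in hand and avoids the semisimplicity step altogether. Note that your ordering also reverses the logical dependence: you deduce $\gr(E_0)=E_{\fg M}$ and $\gr(F_0)=F_{\fg M}$ \emph{after} establishing $E_0=\IM\Pi_0$, $F_0=\ker\Pi_0$, whereas the paper uses these graded identities (claimed by construction) as input to the dimension argument.
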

\begin{proof}
Part (1) is satisfied by construction.
Disjointedness implies readily that
$E_0 \cap F_0 = \{0\}$.
Since $E_{\fg M} \oplus F_{\fg M} =\wedge^\bullet \fg^* M$, 
 Part (1) implies that $\dim E_0 +\dim F_0 =\dim E_{\fg M} + \dim F_{\fg M} = \dim \Omega^\bullet (M)$. Hence $E_0 \oplus F_0 = \Omega^\bullet (M)$.

Disjointedness also implies 
$E_0 = \ker \Box_0$.
Clearly, we also have $\IM \Box_0 \subset F_0$.
From the Cauchy residue formula, proceeding  as in the proof of 
 \cite[Proposition 4.1]{F+T1}, we check that $\Pi_0^2=\Pi_0$.
 In other words,  $\Pi_0$ is a  projection of $\Omega^\bullet (M)$.
 We also check using the Cauchy residue that $\Pi_0 =\id$ on $\ker \Box_0$
 while $\Pi_0 \Box_0 = 0$ so $\Pi_0 = 0 $ on $\IM \Box_0$.
We obtain Part (2) from $\Omega^\bullet (M) = E_0 \oplus F_0$.
\end{proof}

We call $\Pi_0$ the \textit{base kernel projection}.

\subsection{The operators $\Box$, $P$ and $L$}
\label{subsec_BoxPL}

We  define the differential operator acting on $\Omega^\bullet (M)$:
$$
\Box := d \, \delta_0 +\delta_0 \, d .  
$$

\begin{remark}
\label{rem_insteadd}
  The de Rham differential $d$ will be replaced with its algebraic part $\tilde d$ in Section \ref{subsec_tildeDC}.
    The new construction will closely follow the steps below, as the only property that it relies on is that $\gr(d)=d_{\fg M}$. 
\end{remark}

\begin{proposition}
\label{prop_Box}
The operator $\Box$ commutes with $d$ and $\delta_0$. 
It is a  differential operator that preserves the degrees and the weights of  forms:
 $$
\Box (\Omega^{k,\geq w} (M))\subseteq \Omega^{k,\geq w} (M).
$$
We have
$$
\gr(\Box)  =\Box_{\fg M}.
$$
\end{proposition}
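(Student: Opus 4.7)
The plan is to verify each of the four assertions as short consequences of the definitions and of Proposition~\ref{prop_grd} together with the multiplicativity of $\gr$ on morphisms respecting the filtration.

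For the commutation relations, I would simply expand and use the vanishing of squares. Since $d^2=0$,
$$
d\Box = d^2\delta_0 + d\delta_0 d = d\delta_0 d = d\delta_0 d + \delta_0 d^2 = \Box d,
$$
and, because $\delta_0$ is a differential (so $\delta_0^2=0$ by the notion of ``differential algebraic complex'' in Definition~\ref{def_basepair}), the symmetric computation $\delta_0\Box = \delta_0 d\delta_0 = \Box\delta_0$ gives commutation with $\delta_0$.

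For the operator-theoretic properties I would argue as follows. The de Rham differential $d$ is a first order differential operator and $\delta_0$ is algebraic, hence of order zero (Remark~\ref{rem_eqdef_alg}); thus $\Box = d\delta_0+\delta_0 d$ is a differential operator of order at most one. Degree preservation is immediate, since $d$ raises the degree by one while $\delta_0$ lowers it by one, so both summands preserve degree. Weight preservation reduces to checking that both $d$ and $\delta_0$ respect the filtration $\Omega^{\bullet,\geq w}(M)$: for $d$ this is Proposition~\ref{prop_grd}, and for $\delta_0$ it is built into Definition~\ref{def_basepair}. Composing and summing two maps respecting the filtration yields another such map, so $\Box\,\Omega^{k,\geq w}(M)\subseteq \Omega^{k,\geq w}(M)$.

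Finally, to compute $\gr(\Box)$ I would invoke the functoriality of $\gr$ on module morphisms that respect the filtration (Lemma~\ref{lem_gr_modulemorphism}), which in particular implies that $\gr$ is multiplicative on such compositions. Together with $\gr(d)=d_{\fg M}$ (Proposition~\ref{prop_grd}) and $\gr(\delta_0)=d_{\fg M}^t$ (Definition~\ref{def_basepair}), this gives
$$
\gr(\Box)=\gr(d)\,\gr(\delta_0)+\gr(\delta_0)\,\gr(d)= d_{\fg M}\,d_{\fg M}^t+d_{\fg M}^t\,d_{\fg M}=\Box_{\fg M}.
$$
The statement is essentially bookkeeping; the only nontrivial point is to make sure that the multiplicativity of $\gr$ on compositions of filtration-respecting module morphisms has been recorded (or is an immediate unwinding of definitions), so there is no genuine obstacle in the proof.
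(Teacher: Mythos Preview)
Your proof is correct and follows essentially the same approach as the paper's: commutation from $d^2=\delta_0^2=0$, degree and filtration preservation from the corresponding properties of $d$ and $\delta_0$, and the computation of $\gr(\Box)$ via multiplicativity of $\gr$ together with $\gr(d)=d_{\fg M}$ and $\gr(\delta_0)=d_{\fg M}^t$. Your write-up is simply more explicit, in particular in spelling out the commutation identities and in flagging that multiplicativity of $\gr$ on compositions is being used.
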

\begin{proof}
Since $d$ and $\delta_0$ are complexes, they commute with $\Box$.
Moreover, $d$ and $d_0$ increase the degrees of forms by one, while $\delta_0$ decreases the degrees by one. Hence, $\Box$ and $\Box_0$ preserve the degrees of forms. 
As $d$, $d_0$ and $\delta_0$  respect the filtration, so do $\Box$ and $\Box_0$. 
We conclude with 
$$
\gr (\Box) = 
\gr (d) \gr(\delta_0)+\gr(\delta_0)\gr (d)
=d_{\fg M} d_{\fg M}^t+ d_{\fg M}^td_{\fg M} = \Box_{\fg M}.
$$
\end{proof}

Applying Proposition \ref{prop_Box} together with Proposition \ref{prop_invert} to   $z-\Box$,
 the map 
$$
P := \frac1{2\pi i}\oint_{|z|=\eps} (z-\Box)^{-1} dz ,
$$
%locally at a $x\in M$
is well-defined for  $|z|=\eps$ small enough. 

\begin{proposition}
\label{prop_P}
	\begin{enumerate}
		\item The map $P$ is a differential operator acting on $\Omega^\bullet (M)$ that respects the filtration and the degree of the forms with $\gr ( P)  =\Pi_{\fg M}$.
It commutes with $\Box$, $d$ and $\delta_0$.
			\item 
We have 
$\Box^{N_0}P =0$ while $\Box^{N_0}$ acts on $\IM P$ where it is invertible. 
Moreover,		
$P$ is the projection onto
$$
E:=\ker \delta_0  \cap \ker (\delta_0  d) = \IM P= \ker \Box^{N_0},
$$
along
$$
F:= \IM \delta_0  + \IM d\delta_0=\ker P = \IM \Box^{N_0}.
$$
These modules satisfy $\gr (E) =E_0$ and $\gr (F) = F_0$. 
			\end{enumerate}
\end{proposition}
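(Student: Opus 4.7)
For Part (1), I would apply Proposition~\ref{prop_invert} to the operator $z-\Box$ and then integrate. By Proposition~\ref{prop_Box}, $\Box$ is a differential operator respecting the filtration and degree with $\gr(\Box)=\Box_{\fg M}$. Hence $\gr(z-\Box)=z-\Box_{\fg M}$ is algebraic and, choosing $\eps>0$ smaller than the smallest nonzero eigenvalue of $\Box_{\fg M}$, fiberwise invertible on the contour $|z|=\eps$. Proposition~\ref{prop_invert} yields $(z-\Box)^{-1}$ as a differential operator respecting the filtration with $\gr\bigl((z-\Box)^{-1}\bigr)=(z-\Box_{\fg M})^{-1}$, and the contour integral then gives $P$ as a differential operator with
\[
\gr(P)=\frac{1}{2\pi i}\oint_{|z|=\eps}(z-\Box_{\fg M})^{-1}\,dz=\Pi_{\fg M}.
\]
Degree-preservation is inherited from $\Box$. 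The commutations $\Box d=d\Box=d\delta_0 d$ and $\Box\delta_0=\delta_0\Box=\delta_0 d\delta_0$, both consequences of $d^2=\delta_0^2=0$, imply that $(z-\Box)^{-1}$ commutes with $d$ and $\delta_0$, hence so does $P$; commutation with $\Box$ is immediate from the resolvent.

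For Part~(2), I would first establish $P^2=P$ by the standard two-contour computation using the resolvent identity, with concentric contours of radii $\eps_1<\eps_2$ both around $0$. For $\Box^{N_0}P=0$: since $\Pi_{\fg M}$ projects onto $\ker\Box_{\fg M}$, one has $\gr(\Box P)=\Box_{\fg M}\Pi_{\fg M}=0$, so $\Box P$ strictly increases weights and therefore $(\Box P)^{N_0}=0$ by \eqref{eq_N_0}. Since $\Box$ and $P$ commute and $P^2=P$, this equals $\Box^{N_0}P^{N_0}=\Box^{N_0}P$, as wanted.

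To establish $\IM P=\ker\Box^{N_0}$, $\ker P=\IM\Box^{N_0}$, and the invertibility of $\Box^{N_0}$ on $\ker P$, the plan is to introduce the auxiliary operator $A:=\Box^{N_0}+P$. It commutes with $\Box$ and $P$, and its graded part $\gr(A)=\Box_{\fg M}^{N_0}+\Pi_{\fg M}$ is fiberwise invertible, acting as $\id$ on $E_{\fg M}$ and as the invertible $\Box_{\fg M}^{N_0}$ on $F_{\fg M}$. Proposition~\ref{prop_invert} then gives $A^{-1}$ as a differential operator. The splitting $\Omega^\bullet(M)=\IM P\oplus\ker P$ coming from $P^2=P$ is preserved by $A$, which restricts to $\id$ on $\IM P$ (using $\Box^{N_0}P=0$) and to $\Box^{N_0}$ on $\ker P$; invertibility of $A$ therefore forces $\Box^{N_0}\colon\ker P\to\ker P$ to be a bijection. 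The inclusions $\IM P\subseteq\ker\Box^{N_0}$ and $\IM\Box^{N_0}\subseteq\ker P$ follow at once from $\Box^{N_0}P=P\Box^{N_0}=0$, and the reverse inclusions from the direct-sum decomposition combined with bijectivity on $\ker P$.

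The main obstacle is identifying $\IM P=E$ and $\ker P=F$. The easy inclusions come directly: $E\subseteq\ker\Box\subseteq\IM P$, since $\Box\alpha=d\delta_0\alpha+\delta_0 d\alpha$ vanishes on $E$; and $F=\IM\delta_0+\IM d\delta_0$ is $\Box$-invariant (using $\Box\delta_0=\delta_0 d\delta_0\in\IM\delta_0$ and $\Box d\delta_0=d\delta_0 d\delta_0\in\IM d\delta_0$), so $\IM\Box^{N_0}\subseteq F$, hence $\ker P\subseteq F$. For the reverse inclusions, the plan is to show $\gr(E)=E_{\fg M}$ (and symmetrically $\gr(F)=F_{\fg M}$) by a weight-inductive lifting: given $\alpha_0\in E_{\fg M}$ of minimal weight $v$, any lift $\tilde\alpha\in\Omega^{\bullet,\geq v}(M)$ has $\delta_0\tilde\alpha,\,\delta_0 d\tilde\alpha\in\Omega^{\bullet,\geq v+1}(M)$ because the weight-$v$ graded parts $d_{\fg M}^t\alpha_0$ and $d_{\fg M}^t d_{\fg M}\alpha_0$ vanish on $E_{\fg M}$; one then iteratively corrects $\tilde\alpha$ by adding elements of $\IM\delta_0$ of strictly increasing weight, solving the correction equations fiberwise using the invertibility of $\Box_{\fg M}$ on $F_{\fg M}$. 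Once $\gr(E)=E_{\fg M}=\gr(\IM P)$ is obtained, the inclusion $E\subseteq\IM P$ together with graded equality forces $E=\IM P$ by downward induction on the weights of the subfiltrations; symmetrically, $F=\ker P$. Finally, $\gr(E)=E_0$ and $\gr(F)=F_0$ follow from these graded identifications together with $\gr(E_0)=E_{\fg M}$ and $\gr(F_0)=F_{\fg M}$ already established for the pair $(d_0,\delta_0)$.
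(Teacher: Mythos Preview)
Your treatment of Part~(1), of $P^2=P$, and of $\Box^{N_0}P=0$ matches the paper. Your auxiliary operator $A=\Box^{N_0}+P$ is a genuinely different and rather elegant device for obtaining the invertibility of $\Box^{N_0}$ on $\ker P$ and the Fitting decomposition $\Omega^\bullet(M)=\ker\Box^{N_0}\oplus\IM\Box^{N_0}$; the paper instead restricts $\Box$ to the submodule $\IM\Box^{N_0}$ and applies (an adaptation of) Proposition~\ref{prop_invert} there. Both routes work; yours avoids the need to adapt Proposition~\ref{prop_invert} to submodules.

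There is, however, a real gap in your argument for the identifications $\IM P=E$ and $\ker P=F$. Your proposed inductive lifting of $\alpha_0\in E_{\fg M}$ to $E=\ker\delta_0\cap\ker(\delta_0 d)$ corrects $\tilde\alpha$ by ``elements of $\IM\delta_0$''. But any such correction $\gamma\in\IM\delta_0$ satisfies $\delta_0\gamma=0$, so $\delta_0(\tilde\alpha+\gamma)=\delta_0\tilde\alpha$ is unchanged; this correction cannot force the first defining condition of $E$. More generally, producing a lift in $E$ means simultaneously solving $\delta_0\gamma=-\delta_0\tilde\alpha$ and $\delta_0 d\gamma=-\delta_0 d\tilde\alpha$, and your sketch does not explain why the obstruction at each weight lies in $F_{\fg M}$ (in particular, why the weight-$(v{+}1)$ part of $\delta_0\tilde\alpha$ lies in $\IM d_{\fg M}^t$ rather than merely in $\ker d_{\fg M}^t$). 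The paper bypasses this entirely with a short and decisive trick: since $\Pi_0\delta_0=0$ (because $\Pi_{\fg M}d_{\fg M}^t=0$) and $P$ commutes with $\delta_0$, one has $(P-\Pi_0)^k\delta_0=\delta_0 P$ for all $k\geq 1$, and the left side vanishes for $k\geq N_0$ by nilpotence of $P-\Pi_0$. Hence $P\delta_0=\delta_0 P=0$, which immediately yields $\IM P\subseteq\ker\delta_0\cap\ker(\delta_0 d)=E$ and $F\subseteq\ker P$. Combined with the easy inclusions you already have, equality follows without any lifting.
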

\begin{proof}
Part (1) follows from 
Proposition \ref{prop_Box} and  applying $\gr$.
 From the Cauchy residue formula, proceeding  as in the proof of 
 \cite[Proposition 4.1]{F+T1}, we check that $P^2=P$.
 In other words,  $P\colon\Omega^\bullet (M)\to\Omega^\bullet(M)$ is a  projection.
				
For Part (2), $\gr (\Box P )= 0$ by the Cauchy formula and the properties of $\gr$. 
Therefore $(\Box P )^{N_0} =0$, but $(\Box P )^{N_0} = \Box ^{N_0} P $ since $P $ is a projection commuting with $\Box $. 
Therefore $\IM P  \subseteq \ker \Box ^{N_0}$.

Naturally, $\Box $ acts on $\IM \Box ^{N_0}$ which is a submodule of $\Omega^\bullet (M)$ that inherits a filtration. Moreover, we check 
$$
\gr ( \IM \Box ^{N_0} )=  \IM  \gr (\Box ^{N_0}  )
=\IM \Box_{\fg M} = \ker \Pi_{\fg M}, 
$$
since $\Box_{\fg M}$ is symmetric and the orthogonal projection onto its kernel is $\Pi_{\fg M}$,
and
$$
\gr|_{\IM \Box ^{N_0}} (\Box :\IM \Box ^{N_0}\to \IM \Box ^{N_0}) = \Box_{\fg M}:\IM \Pi_{\fg M}\to \IM \Pi_{\fg M}.
$$ 
Adapting the proof of Proposition \ref{prop_invert} to a submodule, we obtain that $\Box |_{\IM \Box ^{N_0}}$ is invertible on $\IM \Box ^{N_0}$.
This implies that $\ker \Box ^{N_0}\cap \IM \Box ^{N_0}=\{0\}$ and $\ker \Box ^{N_0}\oplus  \IM \Box ^{N_0}=\Omega ^\bullet (M)$.
We then readily check that $P$ is the projection onto $\ker \Box^{N_0}$ along $\IM \Box^{N_0}$.

As $\Pi_{\fg M} d_{\fg M}^t=0$, 
we have $\Pi_0 \delta_0=0$.
Hence, since $P$ commutes with $\delta_0$, we have
$$
(P- \Pi_0 )\delta_0=P\delta_0  = \delta_0 P\,,
$$
and so recursively, for any $k\ge 1$,
$$
(P- \Pi_0)^{k} \delta_0 =P^{k}\delta_0= \delta_0 P^k\,.
$$
For $k\ge N_0$, $(P- \Pi_0)^{k}=0$
since $\gr(  P) = \Pi_{\fg M} = \gr (\Pi_0)$.
As $P$ is a projection, we have obtained 
$0 =P\delta_0 = \delta_0  P$. Moreover, since $P$ commutes with $d$, we also have $Pd\delta_0  = dP \delta_0  =0$, which implies the inclusion $\ker P \supset \IM \delta_0  + \IM d\delta_0 =:F$.

Since $d^2=0$ and $(\delta_0 )^2=0$, we compute easily 
$$
\Box^{N_0} = (d \delta_0 )^{N_0} +(\delta_0  d)^{N_0},
$$
As $P$ is the projection onto $\ker \Box^{N_0}$ along $\IM \Box^{N_0}$,  we have 
\begin{align*}
\IM P &=\ker \Box^{N_0}  \supset \ker \delta_0  \cap \ker (\delta_0  d) =: E,\\
\ker P &=\IM \Box^{N_0}  \subset \IM \delta_0  + \IM (d \delta_0 )=: F\,.	
\end{align*}
These last inclusions then imply that $\ker P= F$, but also $\IM P =E$.  
\end{proof}

Note that in our construction below, we do not need the characterisations of $\IM P$  and $\ker P$ as 
$
E=\ker \delta_0  \cap \ker (\delta_0  d)$
and $
F= \IM \delta_0  + \IM d\delta_0$.
We will only need it when proving that this construction coincides with Rumin's. 

We follow the  same strategy as in the case of homogeneous groups \cite[Section 4.1]{F+T1}:

\begin{proposition}
\label{prop_L}
	The differential operator $L$ acting on $\Omega^\bullet (M)$ and defined as 
	$$
	L:=P  \Pi_0 +(\id-P)(\id- \Pi_0),
	$$
preserves the degrees of the forms and respects the filtration with $\gr( L)=\id$. It is invertible and its inverse is a differential operator acting
on $\Omega^\bullet (M)$. 
We have
$$
PL = P  \Pi_0 = L \Pi_0
$$
and this implies 
$$
P=L  \Pi_0 L^{-1}
\qquad\mbox{and}\qquad
 (L^{-1} \Box L)  \Pi_0 =  \Pi_0(L^{-1} \Box L).
 $$
Consequently, 
$$
F=
\ker P = L (\ker  \Pi_0) L^{-1} = L F_0 L^{-1}, 
\qquad
E=\IM P = L (\IM  \Pi_0) L^{-1} = L E_0 L^{-1}.
$$
\end{proposition}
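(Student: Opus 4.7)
The plan is to proceed by direct algebraic manipulation, following the strategy used for homogeneous groups in \cite[Section 4.1]{F+T1}. The only nontrivial ingredient is the invertibility criterion of Proposition~\ref{prop_invert}, invoked once we know that $\gr(L)=\id$.

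I would start by verifying the filtration and degree properties. Both $P$ (Proposition~\ref{prop_P}) and $\Pi_0$ are differential operators that preserve degrees and respect the filtration of $\Omega^\bullet(M)$, and both properties pass to sums and products, hence to $L$. Since $\gr$ is a $C^\infty(M)$-module morphism (Lemma~\ref{lem_gr_modulemorphism}) and $\gr(P)=\Pi_{\fg M}=\gr(\Pi_0)$ with $\Pi_{\fg M}$ idempotent, I would then compute
$$\gr(L)=\Pi_{\fg M}\Pi_{\fg M}+(\id-\Pi_{\fg M})(\id-\Pi_{\fg M})=\Pi_{\fg M}+(\id-\Pi_{\fg M})=\id.$$
Because $\gr(L)=\id$ is trivially fiberwise invertible, Proposition~\ref{prop_invert} immediately yields that $L$ is invertible, that $L^{-1}$ is a differential operator respecting the filtration, and that $\gr(L^{-1})=\id$.

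Next, I would verify the identities $PL=P\Pi_0=L\Pi_0$ by direct expansion, using the idempotency of $P$ and $\Pi_0$ (so that $P(\id-P)=0$ and $(\id-\Pi_0)\Pi_0=0$):
$$PL=P^2\Pi_0+P(\id-P)(\id-\Pi_0)=P\Pi_0,\qquad L\Pi_0=P\Pi_0^2+(\id-P)(\id-\Pi_0)\Pi_0=P\Pi_0.$$
Multiplying $PL=L\Pi_0$ by $L^{-1}$ on the right gives $P=L\Pi_0 L^{-1}$, equivalently $\Pi_0=L^{-1}PL$. The commutation $[\Pi_0,L^{-1}\Box L]=0$ then follows by conjugating the identity $[P,\Box]=0$ from Proposition~\ref{prop_P}, via
$$\Pi_0(L^{-1}\Box L)=L^{-1}PLL^{-1}\Box L=L^{-1}P\Box L=L^{-1}\Box PL=(L^{-1}\Box L)\Pi_0.$$
Finally, applying $L$ and $L^{-1}$ to the characterizations $F_0=\ker\Pi_0$ and $E_0=\IM\Pi_0$ through $P=L\Pi_0 L^{-1}$ yields $\ker P=L(\ker\Pi_0)=L(F_0)$ and $\IM P=L(\IM\Pi_0)=L(E_0)$, which is the content of the final line (interpreting the conjugation shorthand $LF_0L^{-1}$, $LE_0L^{-1}$ as the images $L(F_0)$, $L(E_0)$).

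I do not anticipate any real obstacle: the construction has been engineered so that everything reduces to routine algebra once Proposition~\ref{prop_invert} is available. The main subtlety is merely bookkeeping between the two projections $P$ and $\Pi_0$, which share the same graded limit $\Pi_{\fg M}$ but project onto different submodules of $\Omega^\bullet(M)$, and correspondingly keeping track of when $L$ is being moved past $P$, $\Pi_0$, or $\Box$ using the already-established commutations and idempotencies.
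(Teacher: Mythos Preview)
Your proposal is correct and follows essentially the same approach as the paper: compute $\gr(L)=\Pi_{\fg M}^2+(\id-\Pi_{\fg M})^2=\id$ via Lemma~\ref{lem_gr_modulemorphism}, invoke Proposition~\ref{prop_invert} for invertibility, and then carry out the routine algebraic verifications (which the paper simply labels ``straightforward computations''). Your write-up in fact spells out those computations in more detail than the paper does, but the strategy and the key ingredients are identical.
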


\begin{proof}
By Lemma \ref{lem_gr_modulemorphism}, we have
	$$
	\gr ( L) = \gr( P) \ \gr(  \Pi_0) + \gr (\id-P)\ \gr (\id- \Pi_0)
	=\Pi_{\fg M}^2 + (\id-\Pi_{\fg M})^2= \id.
	$$
	We conclude with Proposition \ref{prop_invert} and straightforward computations. 
\end{proof}

\subsection{The complexes $(E_0^\bullet,D)$ and $(F_0^\bullet,C)$}
\label{subsec_DC}
By Propositions \ref{prop_P} and \ref{prop_L},   
the de Rham differential	 $d$ commutes with  $P$, so $L^{-1} dL $ commutes with $L^{-1}PL = \Pi_0$. 
Hence,
we can decompose the differential operator $L^{-1} d L$ as
$$
L^{-1} d L \ = \ L^{-1} d L \Pi_0 \ + \ L^{-1} d L (\id -\Pi_0)
\ = \ D \ + \ C\,,
$$
where $D$ and $C$ are the differential operators
\begin{align*}
   D&:= L^{-1} d L \Pi_0 \ =\ \Pi_0 L^{-1} d L \Pi_0, \\
   C&:= L^{-1} d L (\id -\Pi_0)\ =\ (\id-\Pi_0) L^{-1} d L (\id -\Pi_0).
\end{align*}
Since 
$(L^{-1}dL)^2 =L^{-1}d^2 L=0$ and $DC=0=CD$, 
it follows that 
$D^2=C^2=0$.
Hence,
the chain complex $L^{-1}dL$ decomposes into the direct sum of the two chain complexes: $D$ acting on $E_0=\IM \Pi_0$, and $C$ acting on $F_0=\ker\Pi_0$.

\begin{proposition}
\label{prop_cohomF0}
		The maps $C$ and $d_0$ are conjugated on $F_0$:
$$
Cg=gd_0 \quad\mbox{on}\  F_0\,, 
$$
 where $g:F_0 \to F_0$ is the invertible operator defined as
$$
g:= C\delta_0 \Box_0^{-1} + \delta_0 \Box_0^{-1} d_0 \colon F_0\to F_0\,.
$$
Hence, the complexes  $(F_0^\bullet,C)$  and $(F_0^\bullet,d_0)$ have the same cohomology.	
\end{proposition}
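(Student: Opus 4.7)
The plan is to establish the identity $Cg = gd_0$ by a direct algebraic manipulation exploiting $C^2 = 0$ and $d_0^2 = 0$, and then to prove invertibility of $g$ by showing that $g - \id$ strictly increases weights on $F_0$ and is therefore nilpotent. The conjugation identity combined with the invertibility of $g$ will immediately yield that $g$ is a degree-preserving chain isomorphism between $(F_0^\bullet, d_0)$ and $(F_0^\bullet, C)$.

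First, note that $g$ is well defined as a degree-preserving endomorphism of $F_0$: on $F_0$ the base box $\Box_0$ is invertible and preserves degrees and weights, and $\IM d_0, \IM \delta_0 \subseteq F_0$. The verification of $Cg = gd_0$ is then immediate. Expanding both sides, the term $C^2 \delta_0 \Box_0^{-1}$ vanishes in $Cg$ via $C^2 = 0$, and the term $\delta_0 \Box_0^{-1} d_0^2$ vanishes in $gd_0$ via $d_0^2 = 0$, so both reduce to the same quantity:
\begin{equation*}
    Cg \;=\; C\,\delta_0\,\Box_0^{-1}\,d_0 \;=\; gd_0 \qquad \mbox{on }F_0.
\end{equation*}

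For the invertibility of $g$, the key observation is that $d_0$ commutes with $\Box_0 = d_0\delta_0 + \delta_0 d_0$ (since $d_0^2 = 0$), hence also with $\Box_0^{-1}$ on $F_0$. Using $\Box_0 \Box_0^{-1} = \id_{F_0}$ together with the commutation $\delta_0 \Box_0^{-1} d_0 = \delta_0 d_0 \Box_0^{-1}$, one obtains
\begin{equation*}
    g - \id \;=\; (C - d_0)\,\delta_0\,\Box_0^{-1} \qquad \mbox{on }F_0.
\end{equation*}
It thus remains to show that $C - d_0$ strictly increases weights. By Proposition \ref{prop_grd} and Definition \ref{def_basepair} one has $\gr(d) = d_{\fg M} = \gr(d_0)$; combining this with $\gr(L) = \id$ and $\gr(\Pi_0) = \Pi_{\fg M}$ yields $\gr(C) = (\id - \Pi_{\fg M})\, d_{\fg M}\, (\id - \Pi_{\fg M})$, which simplifies to $d_{\fg M}$ because $\Pi_{\fg M} d_{\fg M} = 0 = d_{\fg M} \Pi_{\fg M}$ (as $\Pi_{\fg M}$ is the orthogonal projection onto $E_{\fg M} = \ker d_{\fg M} \cap \ker d_{\fg M}^t$). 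Hence $\gr(C - d_0) = 0$, so $C - d_0$, and therefore $g - \id$, strictly increases weights. By \eqref{eq_N_0}, $g - \id$ is nilpotent, so $g$ is invertible on $F_0$ with $g^{-1} = \sum_{j=0}^{N_0} (-1)^j (g-\id)^j$. Together with $Cg = gd_0$ this shows that $g$ is an isomorphism of chain complexes $(F_0^\bullet, d_0) \to (F_0^\bullet, C)$, so the two complexes share the same cohomology.

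The main obstacle I expect is the reduction $\gr(C) = d_{\fg M}$, which pivots on the vanishing identities $\Pi_{\fg M} d_{\fg M} = 0 = d_{\fg M} \Pi_{\fg M}$ on the osculating bundle; once these are recorded, the rest is a standard nilpotent perturbation argument within the finite-weight framework already established.
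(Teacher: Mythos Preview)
Your proof is correct and follows essentially the same approach as the paper: both verify $Cg=gd_0$ directly from $C^2=0$ and $d_0^2=0$, and both establish invertibility of $g$ by showing $\gr|_{F_0}(g)=\id$ via $\gr(C)=d_{\fg M}=\gr(d_0)$ and then invoking the nilpotent-perturbation argument. Your explicit rewriting $g-\id=(C-d_0)\,\delta_0\,\Box_0^{-1}$ is a mild streamlining of the paper's term-by-term computation of $\gr|_{F_0}(g)$, but the underlying mechanism is identical.
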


\begin{proof}
Note that $\IM d_0 \subseteq \IM \Box_0 = F_0$ and that $\Box_0$ is invertible on $\IM \Box_0 = F_0$, so $g$ is well-defined on $F_0$. 

Since $\gr ( L) =\id$, $\gr( d)=d_{\fg_M}$ and $\gr ( \Pi_0) =\Pi_{\fg M}$, we have
$$
\gr( C)= \gr (L^{-1} d L(\id - \Pi_0))=
d_{\fg_M} (\id -\Pi_{\fg M}),
$$
and on $F_{\fg M}$
\begin{align*}
\gr|_{F_0}( g) 
&= \gr( C) \,  d_{\fg_M}^t \Box_{\fg M}^{-1} + d_{\fg_M}^t\Box_{\fg M}^{-1}d_{\fg_M}	\\
&=  d_{\fg_M} d_{\fg_M}^t \Box_{\fg M}^{-1} + d_{\fg_M}^t\Box_{\fg M}^{-1}d_{\fg_M}	\\
&=   (d_{\fg_M}d_{\fg_M}^t + d_{\fg_M}^td_{\fg_M})\Box_{\fg M}^{-1} = \id_{F_{\fg M}}\,.	
\end{align*}
Applying the proof of Proposition \ref{prop_invert} to the submodule $F_0$, $g$ is invertible on $\IM P_0=F_0$. 

Since $C^2=0$ and $d_0^2=0$, it is a straightfoward to check that
$Cg=gd_0$ holds on $F_0$.
\end{proof}

\begin{proposition}
The differential operator $ \Pi_0 L^{-1}$ 
is a chain map between $(\Omega^\bullet (M),d)$ and $(E_0,D)$, that is $( \Pi_0 L^{-1})d = D( \Pi_0 L^{-1})$.
This chain map is homotopically invertible, with homotopic inverse given by $L$
since we have
$$
( \Pi_0 L^{-1}) L = \Pi_0,
\quad\mbox{and}\quad
\id - L( \Pi_0 L^{-1}) = d h +hd\,,
$$
where $h$ is the differential operator acting on $\Omega^\bullet (M)$ and defined as
$$
h:=L\, g  \delta_0\Box_0^{-1} \, g^{-1}(\id - \Pi_0)\, L^{-1},
$$
with $g$ as in Proposition \ref{prop_cohomF0}.
\end{proposition}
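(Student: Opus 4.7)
The plan is to prove the three assertions in order, using Propositions \ref{prop_L}, \ref{prop_P}, and \ref{prop_cohomF0}. The identity $(\Pi_0 L^{-1})L = \Pi_0$ is immediate. For the chain map property $(\Pi_0 L^{-1}) d = D(\Pi_0 L^{-1})$, I would combine $L \Pi_0 L^{-1} = P$ from Proposition \ref{prop_L} with the commutation $[d, P] = 0$ from Proposition \ref{prop_P}: starting from $D = \Pi_0 L^{-1} d L \Pi_0$ and using $\Pi_0^2 = \Pi_0$, we have $D(\Pi_0 L^{-1}) = \Pi_0 L^{-1} d (L \Pi_0 L^{-1}) = \Pi_0 L^{-1} d P = \Pi_0 L^{-1} P d = \Pi_0 L^{-1} d$, where the last step uses $L^{-1} P = \Pi_0 L^{-1}$.

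The main work is the homotopy identity $\id - L(\Pi_0 L^{-1}) = dh + hd$, which, via $L \Pi_0 L^{-1} = P$, reads $\id - P = dh + hd$. My strategy is to conjugate by $L$. Setting $\tilde h := L^{-1} h L = g \delta_0 \Box_0^{-1} g^{-1}(\id - \Pi_0)$, and using $L^{-1} d L = D + C$ together with $L^{-1}(\id - P) L = \id - \Pi_0$, the identity becomes equivalent to
\begin{equation*}
(D+C)\, \tilde h + \tilde h\, (D+C) = \id - \Pi_0.
\end{equation*}

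I would then verify this on each factor of the decomposition $\Omega^\bullet(M) = E_0 \oplus F_0$. On $E_0 = \IM \Pi_0$, the factor $\id - \Pi_0$ annihilates $\tilde h$; since $C$ also vanishes on $E_0$ and $D$ preserves $E_0$, both sides reduce to $0$, matching $\id - \Pi_0 = 0$ on $E_0$. On $F_0 = \ker \Pi_0$, $D$ vanishes identically, $\tilde h|_{F_0} = g\, \delta_0 \Box_0^{-1} g^{-1}$, and Proposition \ref{prop_cohomF0} gives $C|_{F_0} = g\, d_0\, g^{-1}$; a direct computation then collapses $C \tilde h + \tilde h C$ to $g(d_0 \delta_0 + \delta_0 d_0)\Box_0^{-1} g^{-1} = g\, \Box_0 \Box_0^{-1} g^{-1} = \id_{F_0}$, which matches $\id - \Pi_0 = \id$ on $F_0$. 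The only subtle point in this calculation is that $d_0$ commutes with $\Box_0^{-1}$ on $F_0$, which follows from $\Box_0 d_0 = d_0 \Box_0$ together with the invertibility of $\Box_0$ on $F_0$.

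The principal bookkeeping concern is that each factor appearing in $\tilde h$ acts on a subspace where it is well-defined: $\Box_0^{-1}$ lives only on $F_0 = \IM \Box_0$, and $g$ is invertible only on $F_0$ (Proposition \ref{prop_cohomF0}). The projection $\id - \Pi_0$ built into $\tilde h$ guarantees that the argument always lands in $F_0$ before $g^{-1}$ and $\Box_0^{-1}$ are applied, which resolves this issue and makes the formal manipulations above rigorous.
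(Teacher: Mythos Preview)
Your proposal is correct and follows essentially the same approach as the paper. The paper's proof of the homotopy identity is organized as a single chain of equalities for $L^{-1}(dh+hd)L$, implicitly using that $L^{-1}dL$ acts as $C$ on the relevant pieces, whereas you make the decomposition $\Omega^\bullet(M)=E_0\oplus F_0$ explicit; the algebraic content (replacing $D+C$ by $C$, applying $Cg=gd_0$, and collapsing $d_0\delta_0\Box_0^{-1}+\delta_0\Box_0^{-1}d_0=\id_{F_0}$) is identical. For the chain map property, the paper instead writes $d=L(D+C)L^{-1}$ and reads off $\Pi_0 L^{-1}d=\Pi_0 D\Pi_0 L^{-1}=D\Pi_0 L^{-1}$ directly, which is equivalent to your route via $[d,P]=0$ and $L^{-1}P=\Pi_0 L^{-1}$.
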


\begin{proof}
	By construction, we have
	$d = L(D+C)L^{-1}$, with $D= \Pi_0 D \Pi_0$ and $C=(\id- \Pi_0) C (\id- \Pi_0)$,
	so 
	$ \Pi_0 L^{-1} d =  \Pi_0 D  \Pi_0 L^{-1}= D  \Pi_0 L^{-1}$.
	
	It remains to prove the properties regarding $h$.
	We first point out  that $h$ makes sense because $\IM \delta_0 \subset F_0$ 
and $g$ acts on $F_0 = \IM (\id -\Pi_0)$ in an invertible way.
The definitions of $h$ and $C$ together with Proposition \ref{prop_cohomF0} then 
yield:	
\begin{align*}
	L^{-1} (dh+hd) L
	&=
	L^{-1}d L\, g  \delta_0\Box_0^{-1} \, g^{-1}(\id - \Pi_0) +g \delta_0\Box_0^{-1} \, g^{-1}(\id - \Pi_0)\, L^{-1} d L
	\\
	&=
	C\, g  \delta_0\Box_0^{-1} \, g^{-1}(\id - \Pi_0) +g \delta_0\Box_0^{-1} \, g^{-1} C
	\\
	&=
	 gd_0  \delta_0\Box_0^{-1} \, g^{-1}(\id - \Pi_0) +g  \delta_0\Box_0^{-1} \,  d_0 g^{-1}(\id - \Pi_0)
		\\
	&= g\left( d_0\delta_0 \Box_0^{-1} + \delta_0 \Box_0^{-1}d_0\right) g^{-1} (\id- \Pi_0)
	= 	\id- \Pi_0.
	\end{align*}
	The conclusion follows. 
\end{proof}

\begin{corollary}
The cohomology of $(E_0^\bullet,D)$ is linearly isomorphic to the de Rham cohomology of the manifold $M$.
\end{corollary}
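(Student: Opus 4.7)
The corollary should follow essentially for free from the preceding proposition once we invoke the standard fact that a chain homotopy equivalence induces an isomorphism on cohomology. The plan is to explicitly identify the two mutually inverse chain maps and then pass to cohomology.

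First, I would record that the preceding proposition already supplies the chain map $\Pi_0 L^{-1}\colon (\Omega^\bullet(M),d)\to (E_0^\bullet,D)$. Going back the other way, I would observe that the restriction $L|_{E_0^\bullet}\colon (E_0^\bullet,D)\to (\Omega^\bullet(M),d)$ is also a chain map: for any $\alpha\in E_0=\IM \Pi_0$ we have $\Pi_0\alpha=\alpha$, hence
$$
LD\alpha = L(\Pi_0 L^{-1}dL\Pi_0)\alpha = L\Pi_0 L^{-1}d L\alpha = PdL\alpha = dPL\alpha = d L\alpha,
$$
using $PL=L\Pi_0$ and the fact that $d$ commutes with $P$ from Proposition~\ref{prop_P}.

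Next, I would read off from the preceding proposition the two homotopy identities. On one hand,
$$
(\Pi_0 L^{-1})\circ L = \Pi_0 = \id_{E_0^\bullet},
$$
so the composition $E_0^\bullet\to\Omega^\bullet(M)\to E_0^\bullet$ is literally the identity. On the other hand,
$$
\id_{\Omega^\bullet(M)} - L\circ(\Pi_0 L^{-1}) = dh + hd,
$$
so the composition $\Omega^\bullet(M)\to E_0^\bullet\to\Omega^\bullet(M)$ is chain-homotopic to the identity via $h$. Consequently, the induced maps on cohomology satisfy
$$
[\Pi_0 L^{-1}]\circ [L] = \id_{H^\bullet(E_0^\bullet,D)}, \qquad [L]\circ [\Pi_0 L^{-1}] = \id_{H^\bullet(\Omega^\bullet(M),d)},
$$
which gives the desired linear isomorphism
$$
H^\bullet(E_0^\bullet,D)\ \cong\ H^\bullet(\Omega^\bullet(M),d) = H_{\mathrm{dR}}^\bullet(M).
$$

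There is no real obstacle: the only mildly nontrivial point is checking that $L|_{E_0^\bullet}$ is a chain map (a one-line computation using $PL=L\Pi_0$ and $[d,P]=0$). All the hard work — the construction of $L$, the projection $P$, the operator $h$, and the verification of the homotopy formula — has already been done in Propositions~\ref{prop_P}, \ref{prop_L}, \ref{prop_cohomF0}, and the preceding proposition.
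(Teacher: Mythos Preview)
Your proof is correct and follows the same approach as the paper, which states the corollary without proof as an immediate consequence of the preceding proposition establishing that $\Pi_0 L^{-1}$ is a homotopically invertible chain map. You have simply spelled out the standard passage from chain homotopy equivalence to cohomology isomorphism, including the small verification (left implicit in the paper) that $L|_{E_0^\bullet}$ is itself a chain map.
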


\subsection{The  complexes $( E_0^\bullet,\tilde D)$ and $(F_0^\bullet,\tilde C)$}
\label{subsec_tildeDC}

In this section, 
we consider the complexes obtained by considering $\tilde d$ instead of $d$ in the construction above (see Remark \ref{rem_insteadd}). 
The proofs are omitted, as the arguments are essentially the same as above. 
We start by  defining the algebraic $\tilde\Box$ operator acting on $\Omega^\bullet (M)$:
$$
\tilde \Box := \tilde d \, \delta_0 +\delta_0 \, \tilde d .  
$$
It commutes with $\tilde d$ and $\delta_0$ and preserves the degrees and the weights of  forms:
 $$
\tilde \Box (\Omega^{k,\geq w} (M))\subseteq \Omega^{k,\geq w} (M).
$$
We have
$$
\gr(\tilde \Box)  =\Box_{\fg M}.
$$
Applying Proposition \ref{prop_invert} to   $z-\tilde \Box$  locally at $x\in M$, together with Proposition \ref{prop_Box},
 the map 
$$
\tilde P := \frac1{2\pi i}\oint_{|z|=\eps} (z-\tilde \Box)^{-1} dz ,
$$
is well-defined for  $|z|=\eps$ small enough. $\tilde P$ is then an algebraic operator acting on $\Omega^\bullet (M)$ that respects the filtration and the degree of forms, and $\gr (\tilde P)  =\Pi_{\fg M}$.
It commutes with $\tilde \Box$, $\tilde d$ and $\delta_0$.
We have 
$\tilde \Box^{N_0}\tilde P =0$, while $\tilde \Box^{N_0}$ acts on $\IM \tilde P$ where it is invertible. 
Moreover,		
$\tilde P$ is the projection onto
$$
\tilde E:=\ker \delta_0  \cap \ker (\delta_0  \tilde d) = \IM \tilde P= \ker \tilde \Box^{N_0},
$$
along
$$
\tilde F:= \IM \delta_0  + \IM \tilde d\delta_0=\ker\tilde P = \IM \tilde \Box^{N_0}.
$$
These modules satisfy $\gr ( \tilde E) =E_0$ and $\gr (\tilde F) = F_0$. 

	The algebraic operator $\tilde L$  acting on $\Omega^\bullet (M)$ and defined as 
	$$
	\tilde L:=\tilde P  \Pi_0 +(\id-\tilde P)(\id- \Pi_0),
	$$
preserves the degree of forms, respects the filtration, and $\gr( \tilde L)=\id$. It is invertible and its inverse is an algebraic operator acting
on $\Omega^\bullet (M)$. 
We have
$$
\tilde P\tilde L = \tilde P  \Pi_0 = \tilde L \Pi_0, 
\qquad 
\tilde P=\tilde L  \Pi_0 L^{-1}
\qquad\mbox{and}\qquad
 (\tilde L^{-1} \tilde \Box \tilde L)  \Pi_0 =  \Pi_0(\tilde L^{-1} \tilde \Box \tilde L).
 $$
Consequently, 
$$
\tilde F=
\ker \tilde P = \tilde L (\ker  \Pi_0) \tilde L^{-1} = \tilde L F_0 \tilde L^{-1}, 
\qquad
\tilde E=\IM \tilde P = \tilde L (\IM  \Pi_0) \tilde L^{-1} = \tilde L E_0 \tilde L^{-1}.
$$

We define the two differentials $\tilde D$ on $E_0=\IM  \Pi_0$, and $\tilde C$ on $F_0=\ker  \Pi_0$ via
$$
\tilde L^{-1} \tilde d \tilde L   = \tilde L^{-1} \tilde d \tilde L  \Pi_0 +\tilde L^{-1} \tilde d \tilde L(\id - \Pi_0)=: \tilde D+\tilde C
$$

The maps $\tilde C$ and $d_0$ are conjugated on $F_0$:
$$
\tilde C\tilde g=\tilde gd_0 \quad\mbox{on}\  F_0\,, 
$$
 where $\tilde g:F_0 \to F_0$ is the invertible operator defined as
$$
\tilde g:= \tilde C\delta_0 \Box_0^{-1} + \delta_0 \Box_0^{-1} d_0\colon F_0\to F_0\,.
$$
 The complexes $(F_0^\bullet,\tilde C)$ and $(F_0^\bullet,d_0)$ have the same  cohomology.	

The differential operator $ \Pi_0 \tilde L^{-1}$ 
is a chain map between $(\Omega^\bullet (M),\tilde d)$ and $(E_0,\tilde D)$, that is $( \Pi_0 \tilde L^{-1})\tilde d = \tilde D( \Pi_0 \tilde L^{-1})$.
This chain map is homotopically invertible, with homotopic inverse given by $\tilde L$
since we have
$$
( \Pi_0 \tilde L^{-1}) \tilde L = \Pi_0,
\quad\mbox{and}\quad
\id -\tilde L( \Pi_0 \tilde L^{-1}) = \tilde d \tilde h +\tilde h \tilde d\,,
$$
where $\tilde h$ is the algebraic operator acting on $\Omega^\bullet (M)$ and defined as
$$
\tilde h:=\tilde L\, \tilde g  \delta_0\Box_0^{-1} \, \tilde g^{-1}(\id - \Pi_0)\, \tilde L^{-1}.
$$
Consequently, the cohomology of $(E_0^\bullet,\tilde D)$ is linearly isomorphic to the cohomology of $\tilde d$ of the manifold $M$.

\subsection{Equivalent constructions for the subcomplexes $(E_0^\bullet,D)$ and $(E_0^\bullet,\tilde D)$}
\label{subsec_eqDtildeD}
Here, we present an interpretation of Rumin's construction \cite{Rumin1999,RuminPalermo} of the complex that bears his name.
In our presentation, we highlight the difference between objects living on the manifold and their osculating counterparts.

We first define 
 the map 
\begin{equation}
	\label{eq_definvd0}
	d_0^{-1}
	 := \Phi^{-1} \circ d_{\fg M}^{-1}\circ\Phi\colon\Omega^\bullet(M)\to\Omega^\bullet(M) \,,
\end{equation}
which is defined via the partial inverse $d_{\fg M}^{-1}$ of the osculating differential $d_{\fg M}$ (see Section \ref{subsubsec_partialinv}).
From the properties of $d_{\fg M}^{-1}$, 
we see that 
 $(d_0^{-1})^2 =0$, it respects the filtration and decreases the degree by one:
$$
\forall\, k,w\in \bN_0\qquad
d_0^{-1} (\Omega^{k+1,\geq w} (M) )\subseteq \Omega^{k,\geq w}( M)\,.
$$
We have
$$
\ker d_0^t = \ker d_0^{-1}
\quad\mbox{and}\quad 
\IM d_0^t = \IM d_0^{-1}, 
$$
so 
$$
E_0 = \ker d_0\cap \ker d_0^{-1}
\quad\mbox{and}\quad 
F_0= \IM d_0+  \IM d_0^{-1}.
$$
Moreover, 
\begin{equation}
	\label{eq_Pi0wd0inv}
	\Pi_0 = \id -d_0^{-1} d_0 - d_0 d_0^{-1},
\end{equation}
 and we have
$$
d_0^{-1}\Pi_0  = \Pi_0 d_0^{-1}
 	 =0\ ,\quad
 	 d_0\Pi_0 = \Pi_0 d_0=0\,.
$$

	\begin{lemma}
	\label{lem_Pi} 
 	Let us consider the differential operators acting on $\Omega^\bullet (M)$ defined by:
 $$
 b:=d_0^{-1}d_0 - d_0^{-1} d=- d_0^{-1}(d-d_0)
 \quad\mbox{and}\quad
b_1:=d_0 d_0^{-1} -d d_0^{-1}= - (d-d_0)d_0^{-1}.
 $$
\begin{enumerate}
\item The maps $b$ and $b_1 $ are  nilpotent. Consequently,  $\id-b$ and $\id-b_1 $ are invertible, and $(\id-b)^{-1}$ and $(\id-b_1 )^{-1}$ are  well-defined differential operators acting on $\Omega^\bullet (M)$.
We have:
$$
bd_0^{-1}= d_0^{-1}b_1 , 
\quad\mbox{and}\quad 
(\id- b)^{-1}d_0^{-1} =d_0^{-1} (\id-b_1 )^{-1}.
$$
\item 
 The differential operator $\Pi\colon\Omega^\bullet(M)\to\Omega^\bullet(M)$ defined as
		 \begin{align*}%\label{def proj Pi}
	\Pi&:=(\id-b)^{-1} d_0^{-1}d+d(\id-b)^{-1} d_0^{-1}= d_0^{-1}(\id-b_1 )^{-1} d +d d_0^{-1}(\id-b_1 )^{-1}\,,
	\end{align*}
	 is the projection onto
	$$
	 F:=\IM d_0^{-1} +  \IM dd_0^{-1}=\IM d_0^{t} +  \IM dd_0^{t}\,,
	 $$
	 along 
$$
 E:=\ker  d_0^{-1} \cap  \ker (d_0^{-1}d)
= \ker  d_0^{t} \cap  \ker (d_0^{t}d)\,.
$$
\end{enumerate}
\end{lemma}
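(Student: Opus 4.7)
The plan is to handle Part (1) first by observing that $b$ and $b_1$ strictly increase weights, then to establish Part (2) in two stages: first that $\Pi$ is a projection with image $F$, and then that its kernel is $E$. I expect the main obstacle to be identifying $\ker\Pi$ as $E$ cleanly, since the inclusion $E\subseteq\ker\Pi$ is immediate from the definitions but the reverse requires inverting an operator identity to back out $d_0^{-1}\alpha=0$ from $\Pi\alpha=0$.

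For Part (1), I would first note that $\gr(d)=\gr(d_0)=d_{\fg M}$ by Proposition \ref{prop_grd} and the definition of a base differential, so $d-d_0$ strictly increases weights. Since $d_0^{-1}$ respects the filtration, both $b=-d_0^{-1}(d-d_0)$ and $b_1=-(d-d_0)d_0^{-1}$ strictly increase weights, and are therefore nilpotent by \eqref{eq_N_0}, so $\id-b$ and $\id-b_1$ are invertible via Neumann series. A direct expansion yields $bd_0^{-1}=d_0^{-1}b_1=d_0^{-1}d_0d_0^{-1}-d_0^{-1}dd_0^{-1}$; iterating and summing the Neumann series then gives $(\id-b)^{-1}d_0^{-1}=d_0^{-1}(\id-b_1)^{-1}$. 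Substituting this into the definition of $\Pi$ establishes the equality of the two expressions advertised in Part (2).

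The workhorse for Part (2) is the identity $d_0^{-1}dd_0^{-1}=(\id-b)d_0^{-1}$, which follows by combining $d_0^{-1}d=d_0^{-1}d_0-b$ (the definition of $b$) with $d_0^{-1}d_0d_0^{-1}=d_0^{-1}$ (itself obtained from $\Pi_0=\id-d_0^{-1}d_0-d_0d_0^{-1}$, $\Pi_0d_0^{-1}=0$, and $(d_0^{-1})^2=0$, recorded in Section \ref{subsubsec_partialinv}). With this in hand, $\Pi$ fixes each generator of $F$: for $\alpha=d_0^{-1}\beta$ the second summand of $\Pi\alpha$ vanishes by $(d_0^{-1})^2=0$ while the first collapses to $(\id-b)^{-1}(\id-b)d_0^{-1}\beta=d_0^{-1}\beta$; for $\alpha=dd_0^{-1}\gamma$ the first summand vanishes by $d^2=0$ while the second reduces to $dd_0^{-1}\gamma$ by the same simplification. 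Each summand of $\Pi$ also manifestly lands in $\IM d_0^{-1}+\IM dd_0^{-1}=F$, so $\IM\Pi\subseteq F$; combining these gives $\Pi^2=\Pi$ and $\IM\Pi=F$.

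To identify $\ker\Pi$, suppose $\Pi\alpha=0$ and apply $d_0^{-1}$ on the left. The first term $d_0^{-1}(\id-b)^{-1}d_0^{-1}d\alpha$ vanishes because $d_0^{-1}b=0$ (again by $(d_0^{-1})^2=0$), so only the $j=0$ term of the Neumann series for $(\id-b)^{-1}$ contributes and it begins with $(d_0^{-1})^2$. The second term $d_0^{-1}d(\id-b)^{-1}d_0^{-1}\alpha$ collapses, via $(\id-b)^{-1}d_0^{-1}=d_0^{-1}(\id-b_1)^{-1}$ and $d_0^{-1}dd_0^{-1}=(\id-b)d_0^{-1}$, to $d_0^{-1}\alpha$. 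Hence $d_0^{-1}\alpha=0$, and feeding this back into $\Pi\alpha=0$ together with invertibility of $\id-b$ gives $d_0^{-1}d\alpha=0$, so $\alpha\in E$. The equivalent descriptions of $E$ and $F$ with $d_0^t$ in place of $d_0^{-1}$ then follow from the equalities $\ker d_0^{-1}=\ker d_0^t$ and $\IM d_0^{-1}=\IM d_0^t$ already recorded in Section \ref{subsubsec_partialinv}.
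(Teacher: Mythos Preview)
Your proof is correct and follows essentially the same approach as the paper, which proves the nilpotency of $b$ via $\gr(b)=0$ exactly as you do and then defers the remainder to \cite[Lemma 3.9]{F+T1}; you have simply written out in full the argument that reference contains, including the key identity $d_0^{-1}dd_0^{-1}=(\id-b)d_0^{-1}$ and the trick of hitting $\Pi\alpha=0$ with $d_0^{-1}$ on the left.
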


\begin{proof}
By Proposition \ref{prop_grd} and Lemma \ref{lem_Phi},  $d$ and $d_0$ increase the degree of forms by one, respect the filtration, and $\gr(d)  = d_{\fg M}=\gr  (d_0)$.
As $d_0^{-1}$ decreases the degree of forms by one,    $b$ respects the filtration, and
$
\gr	(b)=- \gr (d_0^{-1}) \ \gr (d-d_0) =0$.
Consequently, it is nilpotent with $b^{N_0}=0$.
The proof then follows as in \cite[Lemma 3.9]{F+T1}.
\end{proof}

\begin{corollary}
	\label{cor_lem_Pi}
	The projections $\Pi$ and $P$ constructed in Lemma \ref{lem_Pi} and Proposition \ref{prop_P} respectively coincide.  
\end{corollary}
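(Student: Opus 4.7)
The plan is to reduce the equality $P = \Pi$ to equality of images and kernels, since both are (algebraic) projections on $\Omega^\bullet(M)$. To make the two constructions comparable, we work in the setting of Subsection \ref{subsec_eqDtildeD}, where the base codifferential is the canonical one $\delta_0 = d_0^t$ (compatible with $d_0^{-1}$, since both arise through $\Phi$ from the osculating scalar product and the partial inverse $d_{\fg M}^{-1}$). With this choice, the two characterisations already recorded earlier match termwise.

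More precisely, I would proceed in three short steps. First, invoke Proposition \ref{prop_P}(2) to obtain
$$
\IM P \,=\, \ker \delta_0 \cap \ker(\delta_0 d), \qquad \ker P \,=\, \IM \delta_0 + \IM(d\delta_0).
$$
Second, invoke Lemma \ref{lem_Pi}(2) to obtain
$$
\IM \Pi \,=\, \ker d_0^{-1} \cap \ker(d_0^{-1} d) \,=\, \ker d_0^{t} \cap \ker(d_0^{t} d),
$$
$$
\ker \Pi \,=\, \IM d_0^{-1} + \IM(d d_0^{-1}) \,=\, \IM d_0^{t} + \IM(d d_0^{t}),
$$
where the second equalities in each line rely only on the elementary identifications $\ker d_{\fg M}^{-1} = \ker d_{\fg M}^t$ and $\IM d_{\fg M}^{-1} = \IM d_{\fg M}^t$ recalled in Section \ref{subsubsec_partialinv}, transported to $\Omega^\bullet(M)$ through the isometric isomorphism $\Phi$ of \eqref{eq_scalarpdtPhi}. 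Third, with $\delta_0 = d_0^t$, the two pairs of descriptions coincide verbatim, so $\IM P = \IM \Pi$ and $\ker P = \ker \Pi$, and hence $P = \Pi$.

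I do not foresee a genuine obstacle here: the corollary is essentially bookkeeping, and indeed this is exactly the reason why the remark preceding Proposition \ref{prop_L} highlighted that the characterisations $E = \ker \delta_0 \cap \ker(\delta_0 d)$ and $F = \IM \delta_0 + \IM(d\delta_0)$, although not needed for the construction of $D$ and $C$, would be required precisely when comparing the present construction to Rumin's. The one point deserving explicit mention in the proof is the canonical identification $\delta_0 = d_0^t$ operating in this subsection, together with the passage from $d_0^{-1}$ to $d_0^t$ via $\Phi$; everything else is a direct substitution into the already-established descriptions of images and kernels.
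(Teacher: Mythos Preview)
Your strategy --- show that the two projections have the same image and the same kernel --- is exactly the right one, and it is how the corollary is meant to follow from Proposition~\ref{prop_P}(2) and Lemma~\ref{lem_Pi}(2). The gap is that you have misquoted Lemma~\ref{lem_Pi}(2): it states that $\Pi$ is the projection \emph{onto}
\[
F=\IM d_0^{-1}+\IM(dd_0^{-1})=\IM d_0^{t}+\IM(dd_0^{t})
\]
\emph{along}
\[
E=\ker d_0^{-1}\cap\ker(d_0^{-1}d)=\ker d_0^{t}\cap\ker(d_0^{t}d),
\]
not the other way round. This is already visible from the formula: if $\alpha\in E$ then $d_0^{-1}\alpha=0$ and $d_0^{-1}d\alpha=0$, so both summands in $\Pi\alpha$ vanish; equivalently, $\gr(\Pi)=d_{\fg M}^{-1}d_{\fg M}+d_{\fg M}d_{\fg M}^{-1}=\id-\Pi_{\fg M}$, whereas $\gr(P)=\Pi_{\fg M}$ by Proposition~\ref{prop_P}(1). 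Hence $\IM\Pi=F=\ker P$ and $\ker\Pi=E=\IM P$, and what your argument actually proves (once the swap is undone) is $P=\id-\Pi$, not $P=\Pi$.

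In other words, the literal wording of the corollary contains a slip: what coincides with $P$ is $\Pi_E:=\id-\Pi$, not $\Pi$ itself. This is consistent with the lines immediately after the corollary, where the paper sets $\Pi_F:=\Pi$ and $\Pi_E:=\id-\Pi$ and then establishes $d_c=\Pi_0\,d\,\Pi_E\,\Pi_0=D$; it is $\Pi_E$ that plays the role of $P$ throughout. Your argument, with the image and kernel of $\Pi$ put back as in Lemma~\ref{lem_Pi}(2), proves precisely the intended statement that both constructions yield the same decomposition $\Omega^\bullet(M)=E\oplus F$, i.e.\ $P=\Pi_E$.
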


Following Rumin's notation, the two projections onto $F$ and $E$ 
along $E$ and $F$ are denoted respectively as
$$
\Pi_F:=\Pi 
\qquad\mbox{and}\qquad 
\Pi_E:=\id -\Pi\, .
$$
We then obtain Rumin's construction and its equivalence with the one presented in this paper.

\begin{theorem}
\label{thm_Rumin}

\begin{enumerate}
    \item (M. Rumin)
The de Rham complex $(\Omega^\bullet(M), d)$ splits into two subcomplexes $(E^\bullet,d)$ and $(F^\bullet,d)$. 
Moreover,   the differential operator defined as 
$$
d_c :=\Pi_0 d\, \Pi_E\, \Pi_0\colon\Omega^\bullet(M)\to\Omega^\bullet(M)\ ,  
$$
satisfies
$$
d_c^2=0\quad ,\quad d_c (\Omega^k (M))\subset \Omega^{k+1} (M)\,.
$$
Moreover, we have
$$
\Pi_E = \Pi_E \Pi_0 \Pi_E
\quad\mbox{and}\quad
\Pi_0 \Pi_E \Pi_0 = \Pi_0.
$$
and so $E_0= \IM \Pi_0 = \Pi_0  E$, and 
the complex $(E^\bullet,d)$ is conjugated to  $(E_0^\bullet,d_c)$ via $\Pi_0$.
\item 
	The  maps $d_c$ and $D$ coincide, i.e. 
$d_c = D$.
\end{enumerate}
\end{theorem}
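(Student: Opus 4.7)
The plan is to first establish the two projection identities of Part~(1), since these encode the compatibility between the base kernel projection $\Pi_0$ (onto $E_0$) and the spectral projection $\Pi_E = P$ (onto $E$, via Corollary~\ref{cor_lem_Pi}); Part~(2) will then follow by a short algebraic manipulation using the intertwining $L\Pi_0 = P\Pi_0$ from Proposition~\ref{prop_L}. The splitting $(\Omega^\bullet(M),d) = (E^\bullet,d)\oplus(F^\bullet,d)$ is immediate since $d$ commutes with $P=\Pi_E$ (Proposition~\ref{prop_P}). That $d_c$ raises degree by one is clear from the definition, while $d_c^2=0$ and the conjugation of $(E^\bullet,d)$ with $(E_0^\bullet,d_c)$ via $\Pi_0$ will both follow once the two projection identities are in hand, since these say exactly that $\Pi_0|_E\colon E\to E_0$ and $\Pi_E|_{E_0}\colon E_0\to E$ are mutually inverse isomorphisms.

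The crux is the first identity $\Pi_0\Pi_E\Pi_0=\Pi_0$. I would prove it by combining the explicit formula for $\Pi=\id-\Pi_E$ from Lemma~\ref{lem_Pi}\eqref{part 2, Lemma 4.17} with the commutation $(\id-b)^{-1}d_0^{-1}=d_0^{-1}(\id-b_1)^{-1}$ and the identities $\Pi_0 d_0^{-1}=d_0^{-1}\Pi_0=0$ from Section~\ref{subsubsec_partialinv}. A short computation, using $d\Pi_0=(d-d_0)\Pi_0$ (since $d_0\Pi_0=0$) and $d_0^{-1}(d-d_0)=-b$, reduces the formula to
\[
\Pi_E\Pi_0 \ =\ (\id-b)^{-1}\Pi_0\ =\ \sum_{k=0}^{N_0-1} b^k\Pi_0.
\]
The key observation is that $b=-d_0^{-1}(d-d_0)$ has image in $\IM d_0^{-1}\subseteq F_0=\ker\Pi_0$, so the same is true for $b^k\Pi_0$ whenever $k\geq 1$; left-multiplying by $\Pi_0$ then annihilates all such terms, leaving $\Pi_0\Pi_E\Pi_0 = \Pi_0\cdot\Pi_0 = \Pi_0$. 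The companion identity $\Pi_E\Pi_0\Pi_E=\Pi_E$ follows from this one: for $w\in E$, write $w=Lu$ with $u=L^{-1}w\in E_0$ (using that $L$ restricts to a bijection $E_0\to E$ by Proposition~\ref{prop_L}); rewriting $\Pi_E\Pi_0 w = w$ via $L\Pi_0=P\Pi_0$ as $L\Pi_0 Lu = Lu$ reduces it to $\Pi_0 Lu = u$, which is $\Pi_0 Pu = u$ for $u\in E_0$ (since $Lu=Pu$ on $E_0$) and therefore follows from $\Pi_0 P\Pi_0=\Pi_0$.

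For Part~(2), start from $D=\Pi_0 L^{-1}dL\Pi_0$ and apply $L\Pi_0=\Pi_E\Pi_0$ to obtain $D=\Pi_0 L^{-1}d\Pi_E\Pi_0$. Since $d$ commutes with $\Pi_E$, the image of $d\Pi_E\Pi_0$ lies in $d(E)\subseteq E$. The identity $L\Pi_0 w = w$ for all $w\in E$ (which is precisely $\Pi_E\Pi_0\Pi_E=\Pi_E$) says that $L^{-1}$ coincides with $\Pi_0$ on $E$, so $\Pi_0 L^{-1}$ acts as $\Pi_0$ on $E$, yielding
\[
D \ =\ \Pi_0\cdot\Pi_0\cdot d\,\Pi_E\,\Pi_0 \ =\ \Pi_0\, d\,\Pi_E\,\Pi_0 \ =\ d_c.
\]

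The main obstacle is the identity $\Pi_0\Pi_E\Pi_0=\Pi_0$, which is exactly the place where Rumin's explicit partial-inverse construction must be reconciled with the abstract spectral construction of $P$. Its proof rests critically on the identification of the two projections in Corollary~\ref{cor_lem_Pi}, the commutation relations of Lemma~\ref{lem_Pi}\eqref{part 1, Lemma 4.17}, and the basic inclusion $\IM d_0^{-1}\subseteq F_0$ coming from $\Pi_0 d_0^{-1}=0$; once this is secured, everything else is a formal consequence.
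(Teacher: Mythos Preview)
Your argument is correct and runs parallel to the paper's own proof, which lists four auxiliary properties of $d_0^{-1}$, $\Pi_E$, $\Pi_F$ (notably $d_0^{-1}\Pi_E=\Pi_E d_0^{-1}=0$ and the Hodge--star symmetry $\Pi_E^t=\pm\star\Pi_E\star$) and then defers the remaining details to the companion paper \cite{F+T1}. The order is reversed: the paper obtains $\Pi_E(\id-\Pi_0)\Pi_E=0$ directly from $d_0^{-1}\Pi_E=\Pi_E d_0^{-1}=0$ together with the identity \eqref{eq_Pi0wd0inv}, whereas you first prove $\Pi_0\Pi_E\Pi_0=\Pi_0$ via the clean closed formula $\Pi_E\Pi_0=(\id-b)^{-1}\Pi_0$ and then deduce the companion identity through the $L$--intertwining of Proposition~\ref{prop_L}. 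Your route is slightly more economical in that it never invokes the Hodge--star relation (property~4 in the paper's list), and it makes the reduction of Part~(2) to the projection identities entirely explicit rather than pointing to \cite{F+T1}. One cosmetic point: your internal cross-references to the parts of Lemma~\ref{lem_Pi} use labels belonging to Lemma~\ref{lem_grDt}; you mean parts~(1) and~(2) of Lemma~\ref{lem_Pi}.
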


\begin{proof}
Adapting the proof of \cite[Lemma 3.10]{F+T1}, 
it is easy to check that the operators $d_0^{-1}$, $\Pi_E$ and $\Pi_F$ satisfy the following properties:
\begin{itemize}
\item[1.]	
$d_0^{-1}\Pi_E = \Pi_E d_0^{-1}=0$;
\item[2.]
$d\Pi_F = \Pi_F d$, and $d\Pi_E = \Pi_E d$;
\item[3.]
$\Pi_E  (\id-\Pi_0)  \Pi_E =0$;
\item[4.]
 on $\Omega^k (M)$, we have
$\Pi_F^t=(-1)^{k(n-k)}\star \Pi_F \star $ and $\Pi_E^t=(-1)^{k(n-k)}\star \Pi_E \star $.
\end{itemize}
 These readily imply  Part (1) and Part (2), after adapting  the proofs of Theorems 3.11 
and 4.9 in  \cite{F+T1} respectively.
\end{proof}

This equivalent construction also works if we replace $d$ with $\tilde d$, yielding a complex $(E_0^\bullet,\tilde d_c)$ which coincides with $\tilde D$.

\subsection{Case of regular subRiemannian manifolds}
\label{subsec_regsubR}

Regular subRiemannian manifolds are filtered manifolds (see below). 
If we equip such a  manifold with a Riemannian metric, 
then our construction applies and we obtain two complexes $(E_0^\bullet,D)$ and $(E_0^\bullet,\tilde D)$.
Here, we show that if the Riemannian metric  is compatible with the  subRiemannian structure as explained in Section \ref{subsubsec_compatibleRg} below, then $(E_0^\bullet,D)$
  coincides at least locally with what is now customarily referred to as the Rumin complex.
  We would like to stress that, when introducing what Rumin calls the ``Carnot complex of an $E_0$-regular $CC$-structure'' in \cite{RuminPalermo}, he only briefly mentions the potential impact that the choice of a Riemannian metric can have on the resulting subcomplex of $M$ because he considers only local or osculating objects. In particular, he does not address any compatibility conditions between the Riemannian and subRiemannian structures, although it seems to be an important ingredient in the construction. 
We hope that our approach will lead to a better understanding of the constructed subcomplexes.

\subsubsection{The osculating metric of a regular subRiemannian manifold}
\label{subsubsec_gfgM}

Recall that a subRiemannian manifold is a smooth manifold $M$ equipped with a bracket generating distribution $\cD\subset TM$ and with a metric $g_\cD$ on $\cD$.
Let $\Gamma^1=\Gamma(\cD)$ be the set of smooth sections of $\cD$, and 
$\Gamma^i:=[\Gamma^1, \Gamma^{i-1}]+\Gamma^{i-1}$, for $i>1.$
As $\cD$ is bracket generating, 
there exists $i$ such that $\Gamma^i = TM$, 
and we denote by $s$ the smallest such integer $i$.
If, for every $i=1, \ldots,s$, there exists a subbundle $H^i\subset TM$ for which $\Gamma^i=\Gamma(H^i)$ is the set of smooth sections on $H^i$, then $M$ is said to have an \textit{regular} subRiemannian structure.  
Clearly, the $H^i$s provide the structure of filtered manifold.

Consider a regular subRiemannian manifold $M$ as above. 
In this case,  the osculating Lie algebras are stratified 
\begin{align*}
    \fg_xM=\oplus_{i=1}^s\fg_{i,x}, \qquad \fg_{i,x} = H^i_x / H^{i+1}_x\ ,\ \forall\,x\in M\,,
\end{align*}
and the subspaces $\fg_{i,x}\subset\fg_xM$ are given by imposing $\mathfrak g_1:=\mathcal{D}$, and
$$ 
\fg_{2,x} = [\fg_{1,x},\fg_{1,x}]_{\fg_x M},\quad \fg_{3,x}=[\fg_{1,x},\fg_{2,x}]_{\fg_xM}, \ldots , \
\fg_{i,x} = [\fg_{1,x},[\fg_{i-1,x}]_{\fg_x M},\ldots 
$$
The metric $g_\cD$ on $\cD$  naturally induces
 a scalar product on each osculating Lie algebra fibre $\fg_x M$ \cite[p. 188]{Montgomerybk}. Let us briefly recall its construction.
 
At every point $x\in M$,
the map
$$(\fg_{1,x})^j=\fg_{1,x} \times \ldots \times \fg_{1,x}
 \to  \fg_{j,x}\ ,\ \, 
(V_1, \ldots,  V_j) \longmapsto  [V_1, [V_2,[\cdots, V_j]_{\fg_x M}]_{\fg_x M}\cdots]_{\fg_x M}\ , 
$$
is $j$-linear and surjective. Hence, the metric $g_\cD$ on 
$\fg_{1}=H^1=\cD$ induces a scalar product on $(\fg_{1,x})^j$ and then on $\fg_{j,x}$ (which is the image of the above map, and therefore inherits the scalar product from the orthogonal complement of the kernel).
Constructing this  for $j=2,\ldots,s$, 
we obtain a scalar product $g_{\fg_x M}$ on $\fg_x M$.
By construction, the dependence in $x$ is smooth, 
and $(g_{\fg M})(x) := g_{\fg_x M} $, $x\in M$, defines 
an element $g_{\fg M}\in \Gamma ({\rm Sym}(\fg M\otimes \fg M))$.
Therefore, $g_{\fg M}$ is a metric on the osculating bundle $\fg M$  of Lie algebras.
We call $g_{\fg M}$ the \emph{osculating metric}  induced by $g_\cD$.

\subsubsection{Compatible Riemannian metrics}
\label{subsubsec_compatibleRg}

We now assume that, in addition to being a regular subRiemannian manifold, $M$ is also equipped with a Riemannian metric, that is, with a metric $g_{TM}$ on its tangent bundle $TM$.
As already seen in Sections \ref{subsec_Euclgr} and \ref{subsubsec_natorthgrad}, $g_{TM}$ will also induce a metric $g_{\gr(TM)}$ on $\gr (TM) \cong \fg M$.
The two metrics $g_{\fg M}$ and $g_{\gr(TM)}$ will be different in general.

\begin{definition}
\label{def_compatible}
    A Riemannian metric $g_{TM}$ is \emph{compatible with the subRiemannian structure} of a regular subRiemannian manifold $M$ (or compatible for short) when the two metrics  $g_{\gr(TM)}$ and $g_{\fg M}$ on the osculating Lie algebra bundle $\fg M\cong \gr(TM)$ 
   coincide. 
\end{definition}

By construction, $g_{\fg M}$ coincides on $\cD = H^1 = \fg_1$ with $g_{\cD}$. 
Hence, a necessary condition for $g_{TM}$ to be compatible is for $g_{TM}$ to coincide with $g_{\cD}$ on $\cD$.
However, it is not sufficient generally. For example, let us consider the 6-dimensional free nilpotent Lie group of rank 3 and step 2, denoted by $N_{6,3,6}$ in \cite{LD+T22}. Keeping the notation of \cite{LD+T22}, if we take $\Theta=\lbrace\theta^1,\theta
^2,\theta^3,\theta^4,\theta^5,\theta^6\rbrace$ and $\hat\Theta=\lbrace\theta^1,\theta^2,\theta^3,\theta^4,\theta^4+\theta^5,\theta^5+\theta^6\rbrace$ as global left-invariant orthogonal coframes of $TN_{6,3,6}$, then the corresponding subcomplexes $(E_0^\bullet,D)$ and $(\hat{E}_0^\bullet,\hat{D})$ do not coincide \cite{FTex532}. For clarity, we stress that $E_0^\bullet$ and $\hat{E}_0^\bullet$ are isomorphic subspaces of $\Omega^\bullet(N_{6,3,6})$, and this is necessarily true in general once we assume $M$ is a \textit{regular} subRiemannian manifold. However, they need not be the same subspace, and in this explicit example they are not (it is sufficient to compare $E_0^2$ and $\hat{E}_0^2$). In other words, just extending the metric $g_\cD$ from $\cD$ to an arbitrary Riemannian metric on $TM$ is not sufficient to ensure compatibility. However, compatible metrics can always be constructed.

\begin{lemma}
Let $M$ be a (second countable) regular subRiemannian manifold.
We can always construct a compatible Riemannian metric.
\end{lemma}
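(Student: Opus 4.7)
The plan is to build $g_{TM}$ inductively along the filtration $\cD = H^1 \subset H^2 \subset \ldots \subset H^s = TM$, exploiting the fact that at each level we only need to prescribe a metric on a complement of $H^{j-1}$ in $H^j$. Throughout, I regard the osculating metric $g_{\fg M}$ as a metric on $\gr(TM) = \oplus_j H^j/H^{j-1}$, with restrictions $g_{\fg M}|_{H^j/H^{j-1}}$ corresponding to the scalar products on the graded pieces $\fg_{j,x}$ constructed in Section \ref{subsubsec_gfgM}.

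For the base step, set $g^{(1)} := g_\cD$ on $H^1$. For the inductive step, suppose $g^{(j-1)}$ has been constructed on $H^{j-1}$ for some $2 \le j \le s$. Using second countability (hence paracompactness) of $M$ and a standard partition of unity argument, I fix an auxiliary Riemannian metric $\hat g$ on the vector bundle $H^j \to M$ and let $\cE_j \subset H^j$ be the $\hat g$-orthogonal complement of $H^{j-1}$. Then $\cE_j$ is a smooth subbundle, the quotient map restricts to a vector bundle isomorphism $\pi_j \colon \cE_j \to H^j/H^{j-1}$, and I define $h_j := \pi_j^* (g_{\fg M}|_{H^j/H^{j-1}})$ on $\cE_j$. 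Finally I define $g^{(j)}$ on $H^j = H^{j-1} \oplus \cE_j$ to be the orthogonal direct sum $g^{(j-1)} \oplus h_j$. Iterating up to $j=s$ yields a Riemannian metric $g_{TM} := g^{(s)}$ on $M$ that restricts to $g_\cD$ on $\cD$.

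To verify compatibility, set $\cE_1 := H^1$ and observe that a straightforward induction shows that $\cE_1,\ldots,\cE_s$ are pairwise $g_{TM}$-orthogonal, with $H^j = \cE_1 \oplus \ldots \oplus \cE_j$. Consequently, the natural orthogonal gradation $TM = \oplus^\perp G^j$ associated with $g_{TM}$ and the filtration (see Section \ref{subsec_Euclgr}) satisfies $G^j = \cE_j$, and the isomorphism $G^j \cong H^j/H^{j-1}$ transports $g_{TM}|_{G^j} = h_j$ back to $g_{\fg M}|_{H^j/H^{j-1}}$ by definition of $h_j$. Summing over $j$ gives $g_{\gr(TM)} = g_{\fg M}$ on $\gr(TM) \cong \fg M$, i.e.\ compatibility in the sense of Definition \ref{def_compatible}.

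The main point to appreciate is why the obvious shortcut fails: one might be tempted to cover $M$ by charts carrying local compatible metrics $g_\alpha$ and set $g_{TM} := \sum_\alpha \chi_\alpha g_\alpha$ for a subordinate partition of unity. However, two local compatible metrics generally determine distinct $H^{j-1}$-orthogonal complements in $H^j$, and a short computation with the quotient norm shows that the convex combination introduces an extra positive term strictly enlarging the induced metric on $H^j/H^{j-1}$. The inductive construction above circumvents this by freezing a single global complement $\cE_j$ at each stage before prescribing the metric on it directly from $g_{\fg M}$; paracompactness is used only to produce the harmless auxiliary metric $\hat g$ on $H^j$ at each step, which is the sole place where a nontrivial patching is needed.
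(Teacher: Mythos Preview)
Your proof is correct and takes a genuinely different route from the paper's. The paper constructs local compatible metrics $g_i$ by choosing on each chart $U_i$ an adapted frame $\bX_i$ whose associated graded frame $\langle \bX_i\rangle$ is $g_{\fg M}$-orthonormal, declaring $\bX_i$ itself orthonormal, and then setting $g = \sum_i \varphi_i g_i$ for a subordinate partition of unity. Your final paragraph actually identifies a genuine subtlety in that approach: two such frames on overlapping charts may differ by lower-filtration terms, so the $H^{j-1}$-orthogonal complements they determine in $H^j$ need not coincide, and then the convex combination strictly inflates the induced quotient metric on $H^j/H^{j-1}$ (indeed $\min_w \tfrac12(|v-w|_{g_1}^2 + |v-w|_{g_2}^2) \ge \tfrac12\min_w |v-w|_{g_1}^2 + \tfrac12\min_w |v-w|_{g_2}^2$, with equality only when the two minimizers agree). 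Your inductive construction sidesteps this by fixing a single global splitting $H^j = H^{j-1}\oplus\cE_j$ before prescribing the metric, so nothing ever needs to be averaged on the graded pieces; the partition of unity enters only to produce the auxiliary $\hat g$, where convex combinations are harmless. This is the more robust argument.
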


\begin{proof}
    Consider a sequence of non-negative functions $\varphi_i \in C_c^\infty (M)$, $i\in \bN$, such that 
    the sum $\sum_{i\in\mathbb{N}} \varphi_i$ on $M$ is locally finite and constantly equal to 1.
    We may assume that the support of each $\varphi_i$ is small enough so that it is included in an open subset $U_i\subset M$ where a frame $\bX_i$ of $TU_i$ exists. After a Graham-Schmidt procedure, 
    we may assume that each $\bX_i$ is such that the corresponding frame $\langle \bX_i\rangle$ of the vector bundle $\fg M|_{U_i}$ is $g_{\fg M}$-orthonormal.
Denoting by $g_i$ the corresponding metric on $U_i$ such that $\bX_i$
is orthonormal, one can easily check that 
$g=\sum_{i\in\mathbb N} \varphi_i g_i$ is a compatible Riemannian metric.   
\end{proof}

\subsubsection{The base differential and codifferential associated with a compatible Riemannian metric}
\label{subsubsec_baseM}

Let $M$ be a regular subRiemannian manifold equipped with a Riemannian metric $g_{TM}$.
 The compatibility implies that the scalar product on $\wedge^\bullet \gr(T_x^* M)$ (defined as in Section \ref{subsubsec_PhiM} using $g_{TM}$) 
 coincides with the scalar product on $\wedge^\bullet \fg_x^* M$ induced by the osculating metric $g_{\fg M}$.
Hence, the map $\Phi$ identifying forms and osculating forms and the objects built upon it (for instance 
 the base differential and co-differential $d_0$ and $\delta_0$) are defined globally as objects leaving on the manifolds. These removes  ambiguities and improves on earlier constructions 
 \cite{Rumin1990,Rumin1999,RuminPalermo}.

\appendix

\section{Case of a three-dimensional contact manifold (locally)}
\label{sec_contact}

On a three-dimensional contact manifold, the existence of Darboux coordinates implies that the manifold is filtered and can be locally described as follows. The manifold $M$ is equipped with a  frame $X,Y,T$ satisfying
\begin{align*}
	[X,Y] &= c_0 T +c_1 X+c_2 Y,\\
	[X,T] &= c_3 X + c_4 Y,\\
		[Y,T] &= c_5 X + c_6 Y,
\end{align*}
  with $c_j\in C^\infty(M)$, $j=0,\ldots,6$, with $c_0$ nowhere vanishing. 
  
The associated  free basis $\Theta^\bullet$ of $\Omega^\bullet( M)$ is given by
\begin{align*}
	k=0& \qquad 1(\geq 0),\\
	k=1&\qquad X^* (\geq 1) ,\ Y^* (\geq 1), \  T^*(\geq 2),\\
	k=2&\qquad X^* \wedge Y^*(\geq 2),\ X^* \wedge T^*(\geq 3), \ Y^* \wedge T^*(\geq 3),\\
	k=3 &\qquad\vol= X^* \wedge Y^*\wedge T^*(\geq 4).
\end{align*}
The numbers in parenthesis refer to the $\geq $ weight of the form.

\subsubsection*{Description of $\tilde d$}
For $k=0,3$, $\tilde d^{(k)}=0$. For $k=1,2$, let us describe $\tilde d$  
 in matrix form with respect to the canonical basis described above.
$$
\Mat (\tilde d^{(1)}) = \left(\begin{array}{ccc} 
	-c_1 & -c_2 & -c_0 \\ -c_3 & -c_4 &0 \\ -c_5 & -c_6 &0
\end{array}
\right) , 
\qquad
\Mat( \tilde d^{(2)}) = \left(\begin{array}{ccc} 
	c_3 + c_6 & -c_1 & -c_2\end{array}
\right) .
$$

\subsubsection*{Description of $d$}
For $k=3$, we have $d^{(3)}=0$, while 
for $k=0$, we have $d f (V) = Vf$, $f\in C^\infty (M)$.
Hence, 
\begin{align*}
\Mat(d^{(0)})&= 
\left(\begin{array}{c}
X\\Y\\T 	
\end{array}\right)\\
\Mat(d^{(1)})
&= 
\left(\begin{array}{ccc}
-Y &X& 0 \\ -T & 0& X \\ 0&- T &Y 	
\end{array}\right)
+\Mat(\tilde d^{(1)})
\\	
\Mat(d^{(2)})
&= 
\left(\begin{array}{ccc}
T & -Y & X
\end{array}\right)
+\Mat(\tilde d^{(2)})
\\
\end{align*}

\subsubsection*{Description of $d_{\fg M}$}
The osculating group is isomorphic to the Heisenberg group at any point $p\in M$:
$$
	[\langle X\rangle_p,\langle Y\rangle_p ]_{\fg_x M} = c_0(p) \langle T\rangle_p,
	\qquad
	[\langle X\rangle_p,\langle T\rangle_p ]_{\fg_x M} 
	=
	0
	=
	[\langle Y\rangle_p,\langle T\rangle_p ]_{\fg_x M} .
$$
The associated free basis of $G\Omega^\bullet (M)$ is 
\begin{align*}
	k=0& \qquad 1(= 0),\\
	k=1&\qquad \langle X^* \rangle (= 1) ,\ \langle Y^* \rangle(= 1), \ \langle T^*\rangle (= 2),\\
	k=2&\qquad \langle X^*\rangle \wedge \langle Y^*\rangle (= 2),\  \langle X^*\rangle \wedge \langle T^*\rangle (= 3), \ \langle Y^* \rangle \wedge \langle T^*\rangle (= 3),\\
	k=3 &\qquad \langle X^* \rangle \wedge \langle Y^*\rangle \wedge \langle T^*\rangle (= 4).
\end{align*}
The number in parenthesis refers to the weight of the osculating form. 

For $k=0,2,3$, $d_{\fg M}^{(k)}=0$. For $k=1$, let us describe $d_{\fg M}$  
 in matrix form with respect to the canonical basis described above.
$$
\Mat (d_{\fg M}^{(1)}) = \left(\begin{array}{ccc} 
	0 & 0 & -c_0 \\ 0 & 0 &0 \\ 0 & 0 &0
\end{array}
\right) . $$

\subsubsection*{Description of $d_0$ and $d_0^t$}
For $k=0,2,3$, $d_0^{(k)}=0$ while for $k=1$, 
the description of $d_0^{(1)}$  
 in matrix form with respect to the canonical basis of $\Omega^\bullet (M)$ given above coincides with  $\Mat (d_{\fg M}^{(1)})$.
For $k=0,1,3$, $d_0^{(k,t)}=0$. 
For $k=2$, the matrix description of $d_0^t$  coincides with the transpose of  $\Mat (d_{\fg M}^{(1)})$.
 Consequently, 
$$
\Mat (d_0^{(1)}) = \left(\begin{array}{ccc} 
	0 & 0 & -c_0 \\ 0 & 0 &0 \\ 0 & 0 &0
\end{array}
\right)\ , 
\quad
\Mat (d_0^{(1,t)}) = \left(\begin{array}{ccc} 
	0 & 0 & 0 \\ 0 & 0 &0 \\ -c_0 & 0 &0
\end{array}
\right) . $$

\subsubsection*{Description of $\Box_0$, $P_0$}
For $k=0,3$, $\Box^{(k)}_0=0$, while $\Box^{(1)}_0=d_0^{(1,t)}d_0^{(1)}$ and
$\Box^{(2)}_0=d_0^{(1)}d_0^{(1,t)}$ are represented by the following diagonal matrices:
\begin{align*}
	\Mat \big(\Box_0^{(1)} \big)&= \Mat \big(d_0^{(1,t)}\big)\Mat \big(d_0^{(1)}\big) 
	= \diag \big(0,0,c_0^2\big)\ ,
\\
\Mat \big(\Box_0^{(2)}\big)& = \Mat \big(d_0^{(1)}\big)\Mat \big(d_0^{(1,t)}\big)
= \diag \big(c_0^2,0,0\big)\ .
\end{align*}
Consequently, 
$\Pi_0^{(k)}$ is the identity for $k=0,3$, while for $k=1,2$, 
$$
\Mat \big(\Pi_0^{(1)}\big) = \diag (1,1,0)
\qquad\mbox{and}\qquad
\Mat \big(\Pi_0^{(2)}\big) = \diag (0,1,1)\ .
$$
Consequently, $E_0^{(0)}= \Omega^0(M) = \Phi^{-1} \big(\langle 1\rangle\big)$
and $E_0^{(3)}= \Omega^3(M) = \Phi^{-1}\big(\langle X^* \wedge Y^* \wedge T^*\rangle\big)$ while 
$$    
E_0^{(1)}= \Phi^{-1}\big(\langle X^*\rangle\oplus\langle Y^*\rangle\big)
\quad\mbox{and}\quad
E_0^{(2)}= \Phi^{-1}\big(\langle X^*\wedge T^*\rangle \oplus \langle Y^*\wedge T^*\rangle\big).
$$

\subsubsection*{Description of $\Box$, $P$, $L$}
For $k=0,3$, we have
$\Box^{(k)} = 0$, while for $k=1,2$
\begin{align*}
	\Mat \big(\Box^{(1)} \big)&= \Mat \big(d_0^{(1,t)}\big)\Mat \big(d^{(1)}\big) 
	= c_0 \left(\begin{array}{ccc} 
	0 & 0 & 0 \\ 0 & 0 &0 \\  Y +c_1 & - X  +c_2 & c_0
\end{array}
\right),
\\
\Mat \big(\Box^{(2)}\big)& = \Mat \big(d^{(1)}\big)\Mat \big(d_0^{(1,t)}\big)
= - c_0
\left(\begin{array}{ccc} 
	-c_0 & 0 & 0 \\  X & 0 &0 \\ Y  & 0  & 0
\end{array}
\right).
\end{align*}
Consequently, 
$P^{(k)}$ is the identity on $\Omega^k( M)$ for $k=0,3$. Since  $\big((z-\Box_0)(\Box_0 -\Box)\big)^j =0$ for $j>1$, the formulae for Cauchy residues  and the von Neumann series simplify into
$$
P = \Pi_0 + \Pi_0 (\Box_0 -\Box) c_0^{-2} \pr_{c_0^2} 
+c_0^{-2} \pr_{c_0^2}  (\Box_0 -\Box)\Pi_0 \
 ,$$
where $\pr_{c_0^2} $ denotes the projection onto the $c_0^2$-eigenspace of $\Box_0$.
Hence, for $k=1,2$, 
\begin{align*}
\Mat \big(P^{(1)}\big) &=  \diag (1,1,0) -c_0^{-1} \left(\begin{array}{ccc} 
	0 & 0 & 0 \\ 0 & 0 &0 \\ Y +c_1 & - X  +c_2 & 0
\end{array}
\right)\ ,\\	
\Mat \big(P^{(2)}\big) &=  \diag (0,1,1) +c_0^{-1}\left(\begin{array}{ccc} 
	0 & 0 & 0 \\  X & 0 &0 \\  Y  & 0  & 0
\end{array}
\right)\ .
\end{align*}

We now describe 
$$
L=P\Pi_0 +(\id-P)(\id-\Pi_0)
= \id +(P-\Pi_0)(-\id+2\Pi_0). 
$$
For $k=0,3$, $L^{(k)}$ is the identity on $\Omega^k (M)$
while for $k=1,2$, 
\begin{align*}
\Mat \big(L^{(1)}\big) &=  \id_3 -c_0^{-1} \left(\begin{array}{ccc} 
	0 & 0 & 0 \\ 0 & 0 &0 \\ Y +c_1 & - X  +c_2 & 0
\end{array}
\right)
\\	
\Mat \big(L^{(2)}\big) &=  \id_3 -c_0^{-1}\left(\begin{array}{ccc} 
	0 & 0 & 0 \\  X & 0 &0 \\  Y  & 0  & 0
\end{array}
\right),
\end{align*}
where $\id_3=\diag(1,1,1)$ denotes the 3-by-3 identity matrix. 

\subsubsection*{Computation of $D$}
We now describe 
$$
D^{(k)}=\big(L^{(k+1)}\big)^{-1}d^{(k)} L^{(k)}\Pi_0^{(k)},
$$ 
for $k=0,1,2$.
\begin{align*}
\Mat\big(D^{(0)}\big)&=	\left(\begin{array}{c}
X\\Y\\T +c_0^{-1} ( (Y+c_1) X+ (-X+c_2)Y)	
\end{array}\right)
=	\left(\begin{array}{c}
X\\Y\\0	
\end{array}\right)
,\\
\Mat(D^{(1)})&=	\left(\begin{array}{ccc}
0&0&0\\
-T-c_3 -c_0^{-1}X (Y+c_1)&-c_4 +c_0^{-1}X (X-c_2) &0 \\
-c_5-c_0^{-1}Y(Y+c_1)  &-T-c_6 +Yc_0^{-1}(X-c_2)  &0
\end{array}\right),\\
\Mat(D^{(2)})&=	\left(\begin{array}{ccc}
0&-Y-c_1&X-c_2
\end{array}\right).
\end{align*}
The zeros in the matrices above illustrate $D$ acting on $E_0$ while being trivial on $F_0$.

\subsubsection*{Description of $\tilde\Box$, $\tilde P$, $\tilde L$} For $k=0,3$, we have $\tilde\Box^{(k)}=0$, while for $k=1,2$
\begin{align*}
\Mat \left(\tilde\Box^{(1)}\right)=&\Mat\big( d_0^{(1,t)}\big)\Mat\big(\tilde d^{(1)}\big)=c_0\left(\begin{array}{ccc}
     0&0&0  \\
     0&0&0\\ c_1&c_2&c_0 
\end{array}\right)\ ,\\
\Mat\big(\tilde\Box^{(2)}\big)=&\Mat\big(\tilde d^{(1)}\big)\Mat\big(d_0^{(1,t)}\big)=\diag\big(c_0^2,0,0\big)\ .
\end{align*}
Consequently, $\tilde P^{(k)}$ is the identity on $\Omega^k(M)$ for $k=0,3$. Since $\big((z-\Box_0)(\Box_0-\tilde\Box)\big)^j=0$ for $j>1$, the formulae for Cauchy residues and the von Neumann series simplify into
\begin{align*}
    \tilde P=\Pi_0+\Pi_0(\Box_0-\tilde\Box)c_0^{-2}\pr_{c_0^2}+c_0^{-2}\pr_{c_0^2}(\Box_0-\tilde\Box)\Pi_0\ ,
\end{align*}
where $\pr_{c_0^2}$ denotes the projection onto the $c_0^2$-eigenspace of $\Box_0$. Hence, for $k=1,2$,
\begin{align*}
    \Mat\big(\tilde P^{(1)}\big)=&\diag(1,1,0)-c_0^{-1}\left(\begin{array}{ccc}
        0&0&0\\ 0&0&0\\      
        c_1&c_2&0 
\end{array}\right)\\\Mat\big(\tilde P^{(2)}\big)=&\diag(0,1,1)\ .
\end{align*}
We now describe
\begin{align*}
    \tilde L=\tilde P\Pi_0+(\id-\tilde P)(\id-\Pi_0)=\id+(\tilde P-\Pi_0)(-\id+2\Pi_0)\ .
\end{align*}
For $k=0,3$, $\tilde L^{(k)}$ is the identity on $\Omega^k(M)$ while for $k=1,2$,
\begin{align*}
    \Mat\big(\tilde L^{(1)}\big)=\id_3-c_0^{-1}\left(\begin{array}{ccc}
    0&0&0\\0&0&0\\
    c_1& c_2&0 
\end{array}\right) \ ,\ \Mat\big(\tilde L^{(2)}\big)=\id_3\ ,
\end{align*}
where $\id_3=\diag(1,1,1)$ denotes the 3-by-3 identity matrix.

\subsubsection*{Computation of $\tilde D$} We now describe
\begin{align*}
    \tilde D^{(k)}=\big(L^{(k+1)}\big)^{-1}\tilde d^{(k)}\tilde L^{(k)}\Pi_0^{(k)}
\end{align*}
for $k=0,1,2$.
$$
    \Mat\big(\tilde D^{(0)}\big)=\left(\begin{array}{c}
        0\\0\\0   
\end{array}\right),\quad
\Mat\big(\tilde D^{(1)}\big)=\left(\begin{array}{ccc}
0&0&0\\ -c_3&-c_4&0\\
     -c_5&-c_6&0 
\end{array}\right),
$$
$$
\Mat\big(\tilde D^{(2)}\big)=\left(\begin{array}{ccc}0 &-c_1&-c_2
\end{array} \right).
$$

\section{Examples}
\label{sec_app_ex}

In this appendix, 
we give two examples of a differential operator $D\colon\Omega^\bullet(M)\to\Omega^\bullet(M)$ that respects the filtration \eqref{filtration on k forms by weight} whose transpose  $D^t$ does not necessarily respect the filtration. 

\begin{ex}
\label{ex1_DtnotrespectingF}
We consider the 3-dimensional Heisenberg group $\bH$, and the canonical basis $X,Y,T$ of its Lie algebra $\fh$.
The only non-trivial bracket is $[X,Y]=T$.
Identifying $\fh$ with the space of left-invariant vector fields on $\bH$, we obtain a filtration of the tangent bundle of $\bH$ in subbundles:
$$
 \lbrace 0\rbrace=H^0\subset H^1:={\rm span}_{\bR}\lbrace X,Y\rbrace\subset H^2:={\rm span}_{\bR}\lbrace X,Y,T\rbrace=T\bH^1\,.
$$
The dual of the  left-invariant frame $\bX:=(X,Y,T)$ of $T\bH$ 
yields the global coframe $\Theta:=(X^\ast,Y^\ast,T^\ast)$
(as well as the natural linear isomorphism between  $\Omega^\bullet (\bH)$ and 
$C^\infty (\bH)\otimes \wedge ^\bullet \fh^*$).

Let us now consider the de Rham differential $d\colon\Omega^\bullet(\bH)\to\Omega^\bullet(\bH)$ acting on the 1-form $f\, T^\ast\in\Omega^{1,\ge 2}(\bH)$ with $f\in C^\infty(\bH)$:
    \begin{align*}
        d(fT^\ast)=df\wedge T^\ast+f\,  dT^\ast=Xf\, (X^\ast\wedge T^\ast)+Yf\, (Y^\ast\wedge T^\ast)-f\, (X^\ast\wedge Y^\ast)\in\Omega^{2,\ge 2}(\bH)\,.
    \end{align*}
    If we now consider its formal transpose $d^t=(-1)^{k\cdot 3+1}\star d\star\colon\Omega^{k+1}(\bH)\to\Omega^k(\bH)$ acting on the 2-form $g\,  (X^\ast\wedge Y^\ast)\ \in\ \Omega^{2,\ge 2}(\bH)$ with $g\in C^\infty(\bH)$:
    \begin{align*}
        d^t(g\, (X^\ast\wedge Y^\ast))=&\star d (g\, T^\ast)=\star(-g\, (X^\ast\wedge Y^\ast)+Xg\,(X^\ast\wedge T^\ast)+Yg\,(Y^\ast\wedge T^\ast))\\=&-g\, T^\ast-Xg\,Y^\ast+Yg\,X^\ast\in\Omega^{1,\ge 1}(\bH)
    \end{align*}
    and so $d^t(\Omega^{2,\ge 2}(\bH))\subset \Omega^{1,\ge 1}(\bH)$, but not $\Omega^{1,\ge 2}(\bH)$. 

    Applying similar computations, one can easily check that the transpose of the operator $\gr(d)^\Phi$ instead respects the filtration.

\end{ex}

A  situation similar to Example \ref{ex1_DtnotrespectingF} also holds for the algebraic part of the de Rham differential $\tilde d\colon\Omega^\bullet(\bG)\to\Omega^\bullet(\bG)$, when considering a nilpotent Lie group $\bG$ with a filtration on its Lie algebra $\fg$ that is not coming from a homogeneous structure as in the following example:

\begin{ex}
\label{ex2_DtnotrespectingF}
Let us consider the 4-dimensional Engel group $\bG$, that is, the connected simply connected nilpotent Lie group with Lie algebra $\fg={\rm span}_\bR\lbrace X_1,X_2,X_3,X_4\rbrace$ and $[X_1,X_i]=X_{i+1}$ for $i=2,3$.
We identify $\fg$ with the space of left-invariant vector fields, and we consider the following left-invariant filtration of $T\bG$
    \begin{align*}
        \lbrace 0\rbrace=H^0\subset H^1={\rm span}_\bR\lbrace X_1,X_2\rbrace\subset H^2={\rm span}_\bR\lbrace X_1,X_2,X_3,X_4\rbrace=T\bG\,.
    \end{align*}
    The coframe of the canonical frame $\bX=(X_1,,X_2,X_3,X_4)$ is 
    denoted by $\Theta=(\theta^1,\theta^2,\theta^3,\theta^4)$
    Then, for an arbitrary form $f_1\, \theta^3+f_2\, \theta^4$ in $\Omega^{1,\ge 2}(\bG)$, with  $f_1,f_2\in C^\infty(\bG)$, we have
    \begin{align*}
        \tilde d(f_1\, \theta^3+f_2\, \theta^4)=&-f_1\, (\theta^1\wedge\theta^2)-f_2\, (\theta^1\wedge\theta^3) \ \in \Omega^{2,\ge 2}(\bG)\, . 
    \end{align*}
    However, given a form $g\, (\theta^1\wedge\theta^3) \ \in \Omega^{2,\ge 3}(\bG)$  with $g \in C^\infty(\bG)$, we obtain $\tilde d^t(g\, (\theta^1\wedge\theta^3))=-g\, \theta^4\in\Omega^{1,\ge 2}(\bG)$.
\end{ex}

\bibliographystyle{alpha.bst}

\bibliography{bibli}

\end{document}